\newtheorem{theorem}{Theorem}
\newtheorem{proposition}[theorem]{Proposition}
\newtheorem{lemma}[theorem]{Lemma}
\newtheorem{corollary}[theorem]{Corollary}
\newtheorem{definition}[theorem]{Definition}
\newtheorem{example}[theorem]{Example}
\newtheorem{remark}[theorem]{Remark}
\algnewcommand{\IfThenElse}[3]{
	\State \algorithmicif\ #1\ \algorithmicthen\ #2\ \algorithmicelse\ #3}
\newcommand{\rank}{{\rm rank}}
\newcommand{\st}{{\rm s.t.}}
\newcommand{\ball}{{\rm I\!B}}
\newcommand{\bbW}{\bar{\mathbf{W}}}
\newcommand{\bbw}{\bar{\mathbf{w}}}
\newcommand{\bbR}{\bar{\mathbf{R}}}
\newcommand{\bbA}{\bar{\mathbf{A}}}
\newcommand{\tbW}{\widetilde{\mathbf{W}}}
\newcommand{\tbw}{\widetilde{\mathbf{w}}}
\newcommand{\bzero}{\mathbf{0}}
\newcommand{\bsigma}{\boldsymbol{\sigma}}
\newcommand{\bSigma}{\boldsymbol{\Sigma}}
\newcommand{\bDelta}{\boldsymbol{\Delta}}
\newcommand{\ba}{\mathbf{a}}
\newcommand{\bb}{\mathbf{b}}
\newcommand{\bc}{\mathbf{c}}
\newcommand{\bp}{\mathbf{p}}
\newcommand{\bv}{\mathbf{v}}
\newcommand{\bw}{\mathbf{w}}
\newcommand{\bz}{\mathbf{z}}
\newcommand{\bA}{\mathbf{A}}
\newcommand{\bB}{\mathbf{B}}
\newcommand{\bI}{\mathbf{I}}
\newcommand{\bM}{\mathbf{M}}
\newcommand{\bN}{\mathbf{N}}
\newcommand{\bP}{\mathbf{P}}
\newcommand{\bQ}{\mathbf{Q}}
\newcommand{\bR}{\mathbf{R}}
\newcommand{\bS}{\mathbf{S}}
\newcommand{\bT}{\mathbf{T}}
\newcommand{\bU}{\mathbf{U}}
\newcommand{\bV}{\mathbf{V}}
\newcommand{\bW}{\mathbf{W}}
\newcommand{\bX}{\mathbf{X}}
\newcommand{\bY}{\mathbf{Y}}
\newcommand{\bZ}{\mathbf{Z}}
\providecommand{\keywords}[1]
{
  \small	
  \textbf{\textit{Keywords---}} #1
}
\newtheorem{observation}{Observation}
\newtheorem{property}{Property}
\title{Learning Deep Models:\\ Critical Points and Local Openness}
\author[1]{Maher Nouiehed\thanks{mn102@aub.edu.lb}}
\author[2]{Meisam Razaviyayn\thanks{razaviya@usc.edu}}
\affil[1]{Department of Industrial Engineering and Management,  American University of Beirut}
\affil[2]{Department of Industrial and Systems Engineering,University of Southern California}
\date{March 1, 2023}
\begin{document}

\maketitle

\begin{abstract}
        With the increasing popularity of non-convex \textit{deep models}, developing a unifying theory for studying the optimization problems that arise from training these models becomes very significant. Toward this end, we present in this paper a unifying landscape analysis framework that can be used when the training objective function is the composite of simple functions.\\

	Using the \textit{local openness} property of the underlying training models,  we provide simple sufficient conditions under which any local optimum of the resulting optimization problem is  globally optimal. We first \textit{completely characterize the local openness of the symmetric and non-symmetric matrix multiplication mapping 
	}. Then we use our characterization to: 1) provide a simple proof for  the classical result of Burer-Monteiro and extend it to non-continuous loss functions. 2) Show that every local optimum of two layer linear networks is globally optimal. Unlike many existing results in the literature, our result requires no assumption  on the target data matrix $\bY$, and input data matrix $\bX$. 3) Develop a \textit{complete} characterization of the local/global optima equivalence of multi-layer linear neural networks. We provide various counterexamples to show the necessity of each of our assumptions. 4) Show global/local optima equivalence of over-parameterized non-linear deep models having a certain pyramidal structure. In contrast to existing works, our result requires no assumption on the differentiability of the activation functions and can go beyond ``full-rank" cases.
 \end{abstract}

\keywords{Deep Learning, Neural Network, Local Openness, Non-convex, Global optima}

\newpage
\section{Introduction}
Deep learning is an inference tool that has recently led to significant practical success in various fields ranging from computer vision to natural language processing. Despite its wide empirical use, the theoretical understanding of the landscapes of the optimization problems corresponding to the underlying neural network architecture models is still very limited. While some recent works have tried to explain these successes through the lens of \textit{expressivity} by showing the power of these models in learning large class of mappings, other works find the root of the success in the \textit{generalizability} of these models from learning perspective.\\

From an optimization perspective, training deep models requires solving non-convex optimization problems, where non-convexity arises from the ``deep'' structure of the model. In fact, it has been shown by  \cite{blum1989training} that training neural networks to global optimality is NP-complete in the worst case (even for a simple three-node network). Despite this worst case barrier, the practical success of deep learning may {\color{black} suggest a special structure in the landscapes of the underlying optimization problems.}   In particular, \cite{choromanska2015loss} uses spin glass theory and empirical experiments to show that local optima of deep neural network optimization problems are close to the global optima.\\


In an effort to better understand the landscape of training deep neural networks, \cite{kawaguchi2016deep, lu2017depth, yun2017global, hardt2016identity, laurent2018deep, zhang2019depth, freeman2016topology} studied deep linear neural networks and provided sufficient conditions under which critical points (or local optimal points) of the training optimization problems are globally optimal. {\color{black} Particularly, \cite{freeman2016topology} and \cite{laurent2018deep} show that every local optimum of a deep linear neural network is globally optimal when the widths of intermediate layers are wider than those of the input or output layer.} The local/global equivalence for deep linear neural networks was also established by \cite{lu2017depth} under the assumption that the input matrix $\bX$ and label matrix $\bY$ are both full row rank. Under similar assumptions, \cite{yun2017global} show that every critical point of a deep linear network is a global optimum. \\

By providing multiple examples of non-linear network structures, \cite{yun2018small, ding2019sub} show that this local/global equivalence cannot be generally extended to deep non-linear networks. Despite the existence of spurious local minima in non-linear networks, multiple works have shown that with over-parameterization and proper random initialization, local optima of the resulting optimization problems can be computed using local search procedures. These results are algorithm-dependent and require certain assumptions on the distribution of the input data. Moreover, such results either assume specific activation functions \cite{venturi2018spurious, soltanolkotabi2019theoretical, du2018power, nguyen2019connected} or apply to semi-unrealistic wide networks 
\cite{allen2018convergence, li2018learning, allen2018learning, du2018gradient, zou2018stochastic, arora2018convergence,li2018over, nguyen2018loss, arora2019exact, ding2019sub}. {\color{black} For instance, \cite{li2018over, nguyen2018loss} showed that no set-wise strict local minima exist when the last layer has more neurons than the number of samples.} With modest over-parameterization, \cite{oymak2019towards} show that (stochastic) gradient descent with random initialization converges to a nearly global solution for neural networks with smooth activation functions.\\

Despite the abundance in research studying the landscape of deep optimization problems, many of the existing results are problem-specific or algorithmic-dependent. In this paper, we develop a \textit{unifying theoretical framework for studying the landscape of non-convex optimization problems that are composition of multiple simple mappings}. The theoretical framework harnesses the concept of \textit{local openness} from differential geometry to provide \textit{sufficient conditions under which local optima of the objective function are globally optimal}. Specifically, consider the general optimization problem
\begin{equation}\label{Prob_general}
\min_{\bw \in \mathcal{W}} \; \ell(\mathcal{F}(\bw)),
\end{equation}
where $\mathcal{F}: \mathcal{W} \mapsto \mathcal{Z}$ is a mapping and $\ell: \mathcal{Z}\mapsto \mathbb{R}$ is the loss function. We define the auxiliary optimization problem
\begin{equation}\label{Prob_general_formulated}
\min_{\bz \in \mathcal{Z}} \; \ell(\bz),
\end{equation}
where $\mathcal{Z}$ is the range of the mapping $\mathcal{F}$. In this paper we analyze the local openness of several popular mapping ${\cal F}$ to establish a connection between the local optima of the optimization problems (\ref{Prob_general}) and (\ref{Prob_general_formulated}). This connection is then used to study the local/global equivalence of (\ref{Prob_general}). Our contributions are summarized below:

{\color{black}\begin{itemize}
    \item We completely characterize the local openness of the matrix multiplication mappings ${\cal M}(\bW_1, \bW_2) \triangleq \bW_1 \bW_2$ and ${\cal M}_{+}(\bW) \triangleq \bW \bW^{\top}$; extending the results shown in~\cite{behrends2017matrix}. These mappings naturally appear in several practical problems such as non-convex matrix factorization, Burer-Monteiro approach for semi-definite programming, training deep neural networks, and matrix completion. Our openness results were utilized to establish the local/global equivalence in several non-convex models including low-rank matrix recovery, matrix completion, multi-linear neural networks, and hierarchical non-linear deep neural networks.
    
    \item  We provide a set of necessary and sufficient conditions on the architecture of multi-linear neural networks for local/global optima equivalence. When those conditions are met, all local optimal points are globally optimal; otherwise there exists local optima that are not global. Unlike many existing results, our framework requires no assumptions on the adopted optimization algorithm nor on the probability distribution of the input data. {\color{black} More specifically, our method analyzes the underlying landscapes by studying the structure of the network regardless of what algorithm or input data are adopted for training.} Moreover, all conditions required for local/global equivalence in existing literature are satisfied by our conditions.
    
    \item We use our framework to study the local/global equivalence of non-linear neural networks with pyramidal structure; networks with deeper layers having a lower number of neurons. We show that for continuous and strictly monotone activation functions, every local minimum $\bar{\bW}$ with all weight matrices $\bar{\bW}_i$'s being full row rank is a global minimum. Unlike the results in~\cite{nguyen2017loss}, our results hold for non-differentiabile activation and loss functions.
\end{itemize}}

{\color{black} The rest of the paper is organized as follows. In Section \ref{Sec:Framework}, we detail our proposed framework that utilizes local openness property to provide simple sufficient conditions under which deep learning optimization problems satisfy local/global optima equivalence.  In Section \ref{Sec:Matrix_Mul}, we completely characterize the local openness of matrix multiplication mappings that naturally appear in deep learning models. In Section~\ref{sec:non-linear-local-global}, we show local/global optima equivalence for non-linear deep models having a certain pyramidal structure. Moreover, in Section~\ref{sec:two-layer-results}, we study the equivalence of local and global optima in two layer linear networks. We extend our study to multi-layer linear neural networks in Section~\ref{sec:multi-layer-results}. Finally, Section \ref{Sec:Con} concludes the paper with a brief discussion.} Before proceeding with our results, we define the following notation.

\subsection{Notation}
First, we use ${\bA}_{l,:}$ and ${\bA}_{:,l}$ to denote the $l^{th}$ row and $l^{th}$ column of the matrix $\bA$ respectively. We denote by {\color{black}$\bI$ the identity matrix} and by $\bI_d \in \mathbb{R}^{d \times d}$ the $d \times d$-dimensional identity matrix. Let $\|\bA\|$, $\mathcal{N}(\bA)$, $\mathcal{C}(\bA)$, $\rank(\bA)$ be respectively the Frobenius norm, null-space, column-space, and rank of the matrix $\bA$. Given subspaces $\bU$ and $\bV$, we say $\bU \perp \bV$ if $\bU$ is orthogonal to $\bV$, and $\bU={\bV}^{\perp}$ if $\bU$ is the orthogonal complement of $\bV$. We say matrix $\bA \in \mathbb{R}^{d_1 \times d_0}$ is rank \textit{deficient} if rank($\bA)< \min \{d_1,d_0\}$, and \textit{full rank} if $\rank(\bA)=\min\{d_1,d_0\}$. We call a point $\bW=({\bW}_h, \ldots , {\bW}_1)$, with $\bW_i \in \mathbb{R}^{d_i \times d_{i-1}}$, \textit{non-degenerate} if rank$(\prod_{i=1}^h \bW_i)=\min_{0\leq i \leq h}\, \, d_i$, and \textit{degenerate} if $\rank(\prod_{i=1}^h \bW_i) <  \min_{0\leq i \leq h}\, \, d_i$. We also say a point $\bW$ is a second order saddle point (strict saddle point) of an unconstrained optimization problem if the gradient of the objective function is zero at $\bW$ and the Hessian of the objective function at $\bW$ has a negative eigenvalue.

\section{Proposed Framework}\label{Sec:Framework}
Consider the general optimization problem defined in~\eqref{Prob_general} and its corresponding auxiliary problem~\eqref{Prob_general_formulated}. Since problem (\ref{Prob_general_formulated}) minimizes the function $\ell(\cdot)$ over the range of the mapping $\mathcal{F}$, the global optimal objective values of problems (\ref{Prob_general}) and (\ref{Prob_general_formulated}) are the same. Moreover, there is a clear relation between the global optimal points of the two optimization problems through the mapping $\mathcal{F}$. However, the connection between the local optima of the two optimization problems is not clear. This connection, in particular, is important when the local optima of \eqref{Prob_general_formulated} are {\color{black} either globally optimal or close to optimal.} In what follows, we establish the connection between the local optima of the optimization problems (\ref{Prob_general}) and (\ref{Prob_general_formulated}) under simple sufficient conditions.  This connection is then used to study the relation between local and global optima of (\ref{Prob_general}) and (\ref{Prob_general_formulated}) for various non-convex learning models. In summary, our strategy is to map the original optimization problem~\eqref{Prob_general} to a generally simpler auxiliary problem~\eqref{Prob_general_formulated} for which the underlying landscape has a special structure (example all local optima are global). {\color{black} We then show that this special structure holds for~\eqref{Prob_general}} by establishing the connection between the local optima of the two problems. We start by defining the following important concepts:
\begin{definition}
{\color{black} \textbf{Relative openness:} Consider a set ${\color{black}{\cal S}} \in \mathbb{R}^n$. A subset ${\cal U} \subseteq {\cal S}$ is said to be an open set relative to ${\cal S}$ if for any $u \in {\cal U}$, there exits $\delta >0$ such that $\ball_{\delta}(u)\, \cap \, {\cal S} \subseteq {\cal U}$.}
\end{definition}

\begin{definition}
\textbf{Open mapping:} A mapping $\mathcal{F}: \mathcal{W} \rightarrow \mathcal{Z}$ is said to be open, if for every open set $U \in \mathcal{W}$, $\mathcal{F}(U)$ is relatively open in $\mathcal{Z}$.
\end{definition}

\begin{definition}
\textbf{Locally open mapping:} A mapping $\mathcal{F}(\cdot)$ is said to be locally open at $\bw$ if for every $\epsilon >0$, there exists $\delta>0$ such that {\color{black}$\ball_{\delta}\big(\mathcal{F}(\bw)\big)  \, \cap \, {\cal R}_{\cal F} \subseteq \, \mathcal{F}\big(\ball_{\epsilon}(\bw)\big)$}. 
\end{definition}
Here {\color{black} $\ball_{\epsilon}(\bw)\subseteq \mathcal{W}$} is an open ball with radius {\color{black}${\epsilon}$} centered at $\bw$, {\color{black}$\ball_{\delta}(\mathcal{F}(\bw))\subseteq\mathcal{Z}$} is a  ball of radius {\color{black}$\delta$} centered at $\mathcal{F}(\bw)$, {\color{black}and ${\cal R}_{\cal F}$ is the range (imageset) of the mapping ${\cal F}$}. Intuitively, we say a mapping is locally open at $\bw$ if for any small perturbation $\widetilde{\bz} \in {\cal R}_{\cal F}$ of $\bar{\bz}\, = {\cal F}(\bw)$, there exists $\widetilde{\bw}$ {\color{black} which is} a small perturbation of $\bw$, such that $\widetilde{\bz}\,= {\cal F}(\widetilde{\bz}).$ {\color{black} This directly implies that all invertible functions are locally open at every point.} By definition, openness of a mapping is stronger than local openness. Furthermore, it {\color{black} directly follows} that a mapping is locally open everywhere if and only if it is open.  A useful property of (locally) open mappings is stated below.

{\color{black}\begin{property}\label{prop:composition}
The composition of two (locally) open maps is (locally) open at a given point.
\end{property}}
\vspace{0.2cm}

The following simple intuitive observation which establishes the connection between the local optima of (\ref{Prob_general}) and (\ref{Prob_general_formulated}),  is a major building block of our analyses.

{\color{black}
\begin{observation}\label{lm:LocalMinMapping}
	Suppose that $\mathcal{F}(\cdot)$ is locally open at $\bbw$. If $\bbw$ is a local minimum of problem (\ref{Prob_general}), then $\bar{\bz}=\mathcal{F}(\bbw)$ is a local minimum of problem (\ref{Prob_general_formulated}). Furthermore, if all local minima of the auxiliary problem~(\ref{Prob_general_formulated}) are global, then every local minimum of~\eqref{Prob_general} is globally optimal. 
\end{observation}
\begin{proof}
Let $\bbw$ be a local minimum of problem (\ref{Prob_general}). Then there exists an $\epsilon>0$ such that $\ell(\mathcal{F}(\bbw)) \leq \ell(\mathcal{F}(\bw)), \, \, \forall \,\bw \in \ball_{\epsilon}(\bbw)$. By the definition of local openness,
	$	\exists \, \delta>0$ such that $\ball_{\delta}(\bar{\bz}) \, \cap {\cal R}_{\cal F} \subseteq \, \mathcal{F}\big(\ball_{\epsilon}(\bbw)\big)$ with $\bar{\bz}={\cal F}({\bbw})$. Therefore, $ \ell(\bar{\bz}) \leq \ell(\bz), \, \, \forall \, \bz \in \ball_{\delta}(\bar{\bz})\,\cap \, {\cal R}_{\cal F}$,
	which implies $\bar{\bz}$ is a local minimum of problem (\ref{Prob_general_formulated}). Furthermore, assume $\bbw$ is a local minimum of~\eqref{Prob_general} and let $\ell_{min}$ be the optimal objective value of problems~\eqref{Prob_general}~and~\eqref{Prob_general_formulated}, then 
	\[\ell \left({\cal F}(\bbw)\right) = \ell(\bar{\bz}) =\ell_{min},\]
	where the second equality holds by assuming that every local minimum of~\eqref{Prob_general_formulated} is global. We conclude that $\bbw$ is a global minimum of~\eqref{Prob_general}.

\end{proof}

The above  observation can be used to map multiple local optima of the original problem~\eqref{Prob_general} to one local optimum of the auxiliary problem~\eqref{Prob_general_formulated}; and potentially make the problem easier to analyze. This mapping is particularly interesting in neural networks since permuting the neurons and the corresponding weights in each layer does not change the objective function. Hence, by nature, the underlying landscapes of these optimization problems have multiple local/global optima. However, collapsing these multiple local optima to one could potentially simplify the problem. In other words, instead of analyzing the original landscape with multiple disconnected local optima, we analyze the landscape of the auxiliary problem. Let us clarify this point through the following simple examples: 
\begin{figure}[H]
	\centering
	\subfloat[Original Problem]{\label{fig:a}\includegraphics[scale = 0.55]{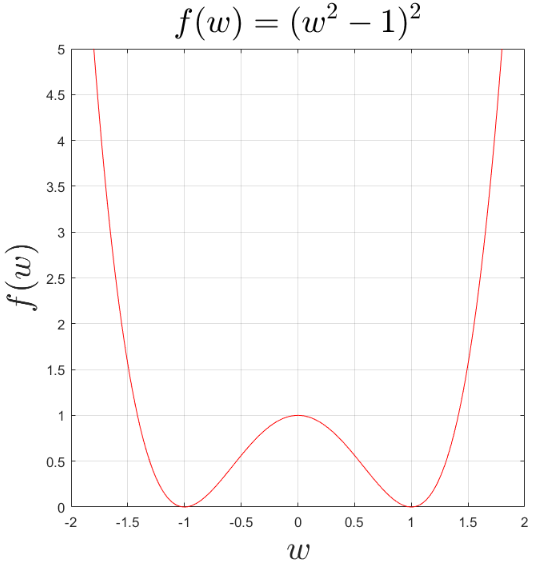}}
	\hspace{1 cm}
	\subfloat[Auxiliary Problem]{\label{fig:b}\includegraphics[scale = 0.55]{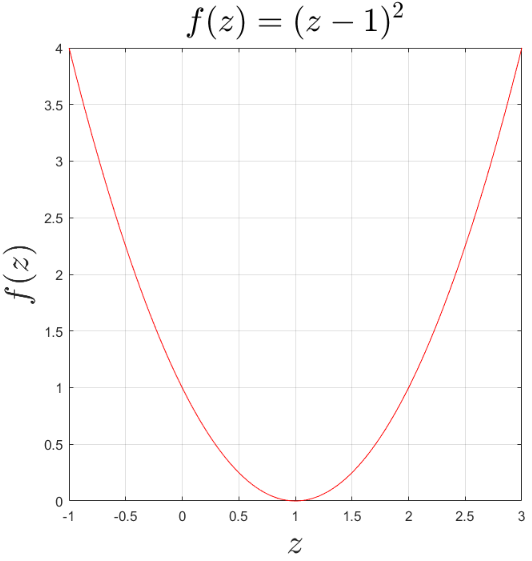}}
	\caption{Two local minima $w =-1$ and $w = +1$ in (a) are mapped to a single local minimum $z = 1$ in (b).}
\end{figure}
\begin{example}
	Consider the optimization problem 
	\begin{equation}\label{LocalMinMapExample}
	\min_{w \in \mathbb{R}} \quad (w^2 - 1)^2,
	\end{equation}
	and its corresponding auxiliary problem
	\begin{equation}\label{LocalMinMapExampleAux}
	\displaystyle{\min_{z \geq 0}} \,\, (z - 1)^2.
	\end{equation}	
	The plots of these two  problems can be found in Figure~\ref{fig:a} and Figure~\ref{fig:b}. Since $\mathcal{F}(w) \triangleq w^2$ is an open mapping in its range, it follows from Observation~\ref{lm:LocalMinMapping} that every local minimum  in problem (\ref{LocalMinMapExample}) is {\color{black} mapped} to a local minimum of problem (\ref{LocalMinMapExampleAux}). Thus the two local minima $w= -1$ and $w= +1$   in (\ref{LocalMinMapExample}) are mapped to a single local minimum $z=1$  of problem (\ref{LocalMinMapExampleAux}). Moreover, since the  optimization problem \eqref{LocalMinMapExampleAux} is convex, the local minimum is global; and hence the original local optima $w=-1$ and $w = +1$ should be both global despite non-convexity of~\eqref{LocalMinMapExample}.
\end{example}

\begin{example}
	Another example is related to the widely used matrix multiplication mapping $\bW_1\bW_2$. Let $(\bbW_1,\bbW_2)$ be a local minimum of the optimization problem 
	\[\min_{\bW_1, \bW_2 } \, \ell(\bW_1\bW_2).\] 
	Then, any point in the set $ {\cal S} \triangleq \{(\bbW_1\bQ_1,\bQ_2\bbW_2)$ with $\bQ_1\bQ_2=\bI\}$ is also a local minimum. If the matrix product $ \bW_1\bW_2$ is locally open at the point $(\bbW_1,\bbW_2)$, then all points in ${\cal S}$ are mapped to a single local minimum $\bar{\bZ}= \bbW_1\bbW_2$ in the corresponding auxiliary problem. A simple {\color{black} one-dimensional} example is plotted in Figures~\ref{fig:c} and \ref{fig:d}.
	
	\begin{figure}[ht] \label{example2-multiplie local to one local}  
    \centering  
	\subfloat[Original Problem]{\label{fig:c}\includegraphics[scale = 0.55]{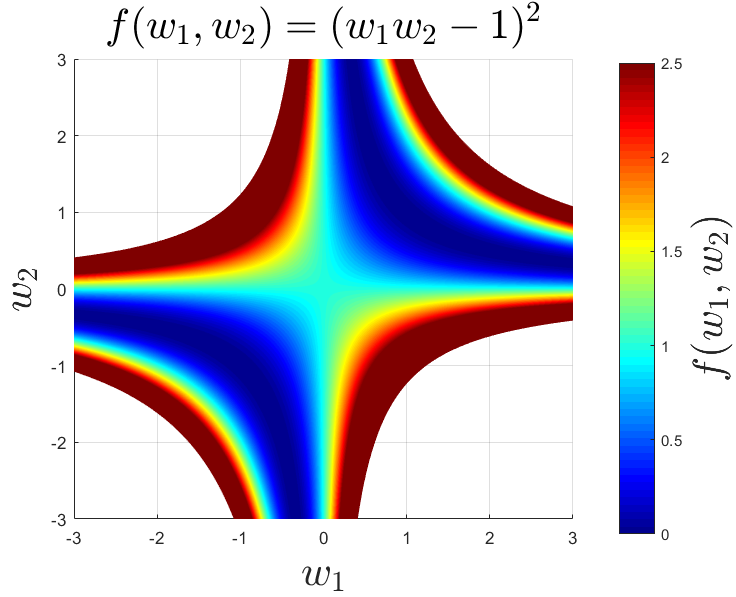}}
	\hspace{0.5 cm}
	\subfloat[Auxiliary Problem]{\label{fig:d}\includegraphics[scale = 0.55]{Plot_4_new.PNG}}
	\caption{All the points in the set $ \{(w_1,w_2) \, | \, w_1w_2 = 1\}$  are local minima in (a) and are mapped to a single local minimum $z = 1$ in (b).}
\end{figure}
\end{example}

\begin{example}
    To demonstrate the sufficiency of the local openness property in relating the local optima of the original and constructed auxiliary problem, we provide the following simple one-dimensional example. Consider the optimization problem
    \begin{equation}\label{LocalMinMapExample2}
	\min_{w \in \mathbb{R}} \quad \left(\left(w^3  - 2w^2 + w -1 \right) - 1 \right)^2,
	\end{equation}
    and its corresponding auxiliary problem
	\begin{equation}\label{LocalMinMapExampleAux2}
	\displaystyle{\min_{z \in \mathbb{R}}} \,\, (z - 1)^2.
	\end{equation}	
	Plots of these two problems can be found in Figure~\ref{fig:e} and Figure~\ref{fig:f}. Since ${\cal F}(w) \triangleq w^3  - 2w^2 + w -1 $ is not locally open at the local optimum $w = 1/3$, the point ${\cal F}(1/3) = -23/27$ is not a local optimum of the auxiliary problem. 
	\begin{figure}[ht] \label{example3-local-optima-to-non-local-optima}  
    \centering  
	\subfloat[Original Problem]{\label{fig:e}\includegraphics[scale = 0.465]{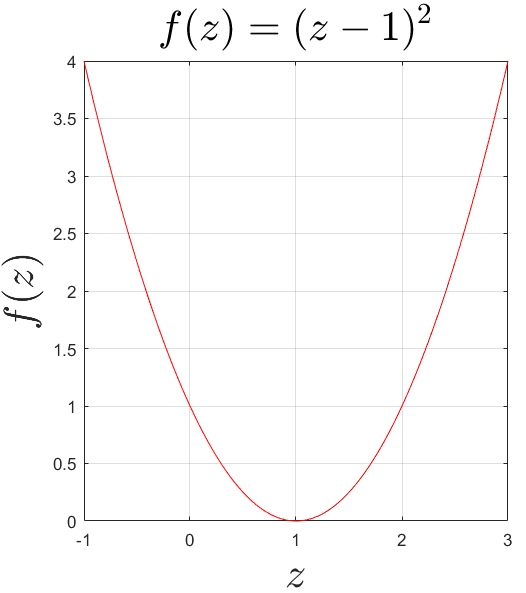}}
	\hspace{0.5 cm}
	\subfloat[Auxiliary Problem]{\label{fig:f}\includegraphics[scale = 0.55]{Plot_4_new.PNG}}
	\caption{Point $ w = 1/3$  is a local minima in (a) and is mapped to the point ${\cal F}(1/3) = -23/27$ which is not a local minimum in (b).}
	\end{figure}
\end{example}
 
{\color{black} Observation~\ref{lm:LocalMinMapping} motivates us to study the local openness of  mappings that appear in widely used optimization problems. A mapping that appears naturally in machine learning optimization problems is matrix multiplication (multiplication of matrices). In the next section, we will focus on characterizing the local openness of this mapping.}

\section{Local Openness of Matrix Multiplication Mappings}\label{Sec:Matrix_Mul}
Motivated by Observation 1, we study the local openness/openness of matrix multiplication mappings. One example that is used in the famous Burer-Monteiro approach for semi-definite programming \cite{burer2005local},  is the symmetric matrix multiplication mapping $\mathcal{M}_{+}: \mathbb{R}^{n \times k} \mapsto \mathcal{R}_{\mathcal{M}_+}$ defined as
\[\mathcal{M}_{+}(\bW) \triangleq \bW{\bW}^{\top},\]
where $\mathcal{R}_{\mathcal{M}_+} \triangleq  \{\, \bZ \in \mathbb{R}^{n \times n} \,\, | \, \bZ \succeq 0, \textrm{ rank}(\bZ) \, \leq \, \min\{n,k\} \}$ is the range of  $\mathcal{M}_{+}$.

\vspace{0.2cm}

Another mapping that is widely used in many optimization problems, such as deep neural networks and matrix completion, is the non-symmetric matrix multiplication mapping $\mathcal{M}: \mathbb{R}^{m \times k} \times \mathbb{R}^{k \times n}  \mapsto \mathcal{R}_{\mathcal{M}}$ defined as
\begin{flalign}
\mathcal{M}({\bW}_1,{\bW}_2) \triangleq {\bW}_1{\bW}_2,
\end{flalign}
where $\mathcal{R}_\mathcal{M}\triangleq  \{\, \bZ \in \mathbb{R}^{m \times n} \,\, | \textrm{ rank}(\bZ) \, \leq \, \min(m,n,k) \}$ is the range of the mapping $\mathcal{M}$.
The matrix multiplication mapping $\mathcal{M}({\bW}_1,{\bW}_2)$ naturally appears in deep models and is widely used as a non-convex factorization for rank constrained problems, see \cite{wang2016unified, bhojanapalli2016global, ge2016matrix, srebro2003weighted, sun2015matrix}. To our knowledge, the complete characterization of the local openness of this mapping has not been studied in the optimization literature before. Similarly, the symmetric matrix multiplication mapping $\mathcal{M}_{+} (\bW)$ is widely used as a non-convex factorization in {\color{black}semi-definite programming (SDP)}, see \cite{burer2005local, zheng2016convergence, park2016non, boumal2016non}, and the characterization of the openness of this mapping remains unsolved. \\

While the classical open mapping theorem in \cite{rudin1973functional} states that  surjective continuous linear operators are open, this is not true for general bilinear mappings such as matrix product. In fact, by providing a simple counterexample of a bilinear mapping that is not open, \cite{horowitz1975elementary} shows that the linear case cannot be generally extended to multilinear maps. Several papers, see \cite{balcerzak2013openness, balcerzak2005multiplying, behrends2011products}, investigate this bilinear mapping and provide a characterization of the points where this  mapping is open. The more general matrix multiplication mapping $\mathcal{M}$ was studied in \cite{behrends2017matrix}. The former paper provides necessary and sufficient conditions under which the mapping is locally open in $\mathbb{R}^{m \times n}$. However, in our framework the (relative) local openness should be studied with respect to the range of the mapping $\mathcal{R}_{\mathcal{M}}$ which can be different from $\mathbb{R}^{m \times n}$ when $k<\min\{m,n\}$.\\

For  ${\bW}_1 \in \mathbb{R}^{m \times k}$ and ${\bW}_2 \in \mathbb{R}^{k \times n}$ with $k \geq \min\{m,n\}$, the range of the mapping ${\cal M}(\bW_1,\bW_2) = \bW_1 \bW_2$ is the entire space $\mathbb{R}^{m \times n}$. In this case, which we refer to as the full rank case, \cite[Theorem~2.5]{behrends2017matrix} provides a complete characterization of the pairs $({\bW}_1,{\bW}_2)$ for which the mapping is locally open. However, when $k < \min\{m,n\}$, which we refer to as the rank-deficient case, the mapping is not locally open in $\mathbb{R}^{m \times n}$, but can still be locally open in $\mathcal{R}_{\mathcal{M}}$. For a simple example, consider $\bbW_1=\left[  1 \quad 2  \right]^{\top}$ and $\bbW_2=\left[  1 \quad 1 \right]$. In this example there does not exist $\tbW_1$, $\tbW_2$ perturbations of $\bbW_1$ and $\bbW_2$ respectively such that $\tbW_1\tbW_2=\widetilde{\bZ}$ when $\widetilde{\bZ}$ is a full rank perturbation of $\bZ=\bbW_1\bbW_2$. However, for any rank $1$ perturbation  $\widetilde{\bZ}$, we can find a perturbed pair $(\tbW_1, \tbW_2)$ such that $\widetilde{\bZ} = \tbW_1\tbW_2$.\\ 


In this section we provide a complete characterization of points $({\bW}_1,{\bW}_2)$ for which the mapping ${\cal M}$ is locally open  when $k < \min\{m,n\}$. Moreover, we show in Theorem~\ref{thm:Loc_Open_general} that the symmetric matrix multiplication $\mathcal{M}_{+}$ is open in its range ${\cal R}_{{\cal M}_{+}}$. The proofs of these theorems can be found in Appendices~\ref{local-open-app} and \ref{PSD-local-open-app}, respectively.
We start by restating the main result in \cite{behrends2017matrix}:

\begin{proposition} \label{Prev_result}
	\cite[Theorem 2.5 Rephrased]{behrends2017matrix} Let $\mathcal{M}(\bW_1,\bW_2) = \bW_1 \bW_2$ denote the matrix multiplication mapping with $\bW_1 \in \mathbb{R}^{m\times k}$ and $\bW_2\in \mathbb{R}^{k\times n}$. Assume $k \geq \min \{m,n\}$.  Then the following statements are equivalent:
	\begin{enumerate}
		\item $\mathcal{M}(\cdot,\cdot)$ is locally open at $(\bbW_1,\bbW_2)$.
		\vspace{0.1cm}
		\item $\begin{cases}
		\exists \, \tbW_1 \in \mathbb{R}^{m\times k} \mbox{ such that } \tbW_1 \bbW_2=\bzero  \mbox{ and } \bbW_1 + \tbW_1 \mbox{ is full row rank}.\\
		\hspace{2.2 in} \mbox{or}\\
		\exists \, \tbW_2 \in \mathbb{R}^{k\times n} \mbox{ such that } \bbW_1\tbW_2=\bzero  \mbox{ and } \bbW_2 + \tbW_2 \mbox{ is full column rank.}
		\end{cases}$
		\item $\dim\big( \mathcal{N}(\bbW_1) \, \cap \, \mathcal{C}(\bbW_2) \big)\leq k-m \, \, \,$ or $\, \, \, n-(\rank(\bbW_2) - \dim\big( \mathcal{N}(\bbW_1) \, \cap \, \mathcal{C}(\bbW_2) \big) \leq k - \rank(\bbW_1)$.
	\end{enumerate}
\end{proposition}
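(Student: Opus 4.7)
The plan is to prove the three-way equivalence by showing (2)$\Leftrightarrow$(3) via a direct linear algebra calculation, (2)$\Rightarrow$(1) via a nearby alternative factorization, and (1)$\Rightarrow$(2) via an obstruction argument which I expect to be the main difficulty.

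For (2)$\Leftrightarrow$(3), it suffices to focus on the first clause of each; the second follows from the transpose symmetry $\mathcal{M}(\bbW_1,\bbW_2)\leftrightarrow \mathcal{M}(\bbW_2^T,\bbW_1^T)$. The constraint $\tbW_1\bbW_2 = \bb{0}$ forces each row of $\tbW_1$ to lie in $\mathcal{C}(\bbW_2)^{\perp}$. Letting $P$ be the orthogonal projector onto $\mathcal{C}(\bbW_2)$, we may decompose $\bbW_1+\tbW_1 = \bbW_1 P + \big(\bbW_1(I-P)+\tbW_1\big)$, where the two summands have rows in orthogonal subspaces of $\mathbb{R}^k$. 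A short rank-nullity calculation, combined with the identity $\rank(\bbW_1 P) = \rank(\bbW_2) - \dim\big(\mathcal{N}(\bbW_1)\cap \mathcal{C}(\bbW_2)\big)$, then yields
\[
\max_{\tbW_1\,:\,\tbW_1\bbW_2 = 0} \rank(\bbW_1+\tbW_1) \;=\; \min\!\Big\{m,\ k - \dim\big(\mathcal{N}(\bbW_1)\cap \mathcal{C}(\bbW_2)\big)\Big\},
\]
so the first clause of (2) (attaining rank $m$) is equivalent to $\dim\big(\mathcal{N}(\bbW_1)\cap \mathcal{C}(\bbW_2)\big) \leq k-m$, the first half of (3).

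For (2)$\Rightarrow$(1), suppose the first clause holds with witness $\tbW_1$. The $m\times m$ minors of $\bbW_1+\epsilon\tbW_1$ are polynomials in $\epsilon$ that do not all vanish identically (they are nonzero at $\epsilon=1$), so arbitrarily small $\epsilon>0$ still give full row rank, while $\epsilon\tbW_1\bbW_2 = \bb{0}$. Consequently $\bbW_1' \triangleq \bbW_1+\epsilon\tbW_1$ is close to $\bbW_1$, is full row rank, and still factorizes $\bar{\bb{Z}} = \bbW_1'\bbW_2$. For any small perturbation $\bar{\bb{Z}}+\Delta\bb{Z}$, setting $\bW_2 \triangleq \bbW_2 + (\bbW_1')^{\dagger}\Delta\bb{Z}$ with the right pseudoinverse gives $\bbW_1'\bW_2 = \bar{\bb{Z}}+\Delta\bb{Z}$, with $\bW_2$ close to $\bbW_2$. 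This yields the required preimage in a prescribed neighborhood of $(\bbW_1,\bbW_2)$, and the second clause of (2) is handled by the symmetric argument.

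The hardest step is (1)$\Rightarrow$(2), which I would attack by contrapositive. If (3) fails, then $\bbW_1$ has row rank $<m$, $\bbW_2$ has column rank $<n$, and $d\triangleq \dim\big(\mathcal{N}(\bbW_1)\cap \mathcal{C}(\bbW_2)\big)$ exceeds both thresholds. The image of $D\mathcal{M}(\bbW_1,\bbW_2) = \{X+Y : \text{rows of } X\subseteq \mathcal{C}(\bbW_2^T),\ \text{cols of } Y\subseteq \mathcal{C}(\bbW_1)\}$ is then a strict subspace of $\mathbb{R}^{m\times n}$, but this first-order obstruction must be upgraded to a local one since higher-order terms can in principle escape it. The key claim would be that, by lower-semicontinuity of rank combined with the failure of (2), any $(\bW_1,\bW_2)$ close to $(\bbW_1,\bbW_2)$ whose product lies near $\bar{\bb{Z}}$ is forced to have $\mathcal{C}(\bW_1)$ close to $\mathcal{C}(\bbW_1)$ and $\mathcal{C}(\bW_2^T)$ close to $\mathcal{C}(\bbW_2^T)$; then picking a rank-one $\Delta\bb{Z}$ with column or row space transverse to these fixed subspaces produces an unreachable direction in every neighborhood of $\bar{\bb{Z}}$. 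The technical crux is making the ``close-to'' statement quantitatively sharp enough to exhibit a forbidden direction inside every neighborhood of $(\bbW_1,\bbW_2)$, not merely asymptotically, and this is where I would expect the bulk of the work to lie.
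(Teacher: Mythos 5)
First, note that the paper does not prove this proposition at all: it is imported verbatim as \cite[Theorem 2.5]{behrends2017matrix}, so there is no in-paper argument to compare yours against; your proposal is an attempt to reprove Behrends' theorem from scratch. Within that attempt, the parts you do carry out are essentially sound. The equivalence $(2)\Leftrightarrow(3)$ via the formula $\max_{\tbW_1:\tbW_1\bbW_2=0}\rank(\bbW_1+\tbW_1)=\min\{m,\,k-\dim(\mathcal{N}(\bbW_1)\cap\mathcal{C}(\bbW_2))\}$ is correct (the upper bound follows since $\mathcal{N}(\bbW_1+\tbW_1)\supseteq\mathcal{N}(\bbW_1)\cap\mathcal{C}(\bbW_2)$, and attainability follows by choosing $\tbW_1$ freely on $\mathcal{C}(\bbW_2)^{\perp}$), though reducing the second clause to the first by transposition additionally requires the duality identity $\dim(\mathcal{N}(\bbW_2^T)\cap\mathcal{C}(\bbW_1^T))=\rank(\bbW_1)-\rank(\bbW_2)+\dim(\mathcal{N}(\bbW_1)\cap\mathcal{C}(\bbW_2))$ (proved in the paper's Lemma 7 for the rank-deficient case), which you do not state. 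The implication $(2)\Rightarrow(1)$ via $\bbW_1'=\bbW_1+\epsilon\tbW_1$ and the right pseudoinverse is complete and correct, with the quantifiers in the right order.

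The genuine gap is $(1)\Rightarrow(2)$, which you explicitly leave open, and the plan you outline for it would not go through as stated. Your key claim --- that any $(\bW_1,\bW_2)$ near $(\bbW_1,\bbW_2)$ with product near $\bar{\bb{Z}}$ must have $\mathcal{C}(\bW_1)$ close to $\mathcal{C}(\bbW_1)$ and $\mathcal{C}(\bW_2^T)$ close to $\mathcal{C}(\bbW_2^T)$ --- is false: rank is only lower semicontinuous, so an arbitrarily small perturbation of a rank-deficient $\bbW_1$ can raise its rank and make its column space acquire entirely new directions (indeed this is exactly the mechanism by which condition $(2)$, when it holds, produces openness). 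Consequently the ``forbidden direction'' you propose is not actually forbidden for all nearby factorizations, and the first-order obstruction from the image of $D\mathcal{M}$ cannot be upgraded to a local one in the way you describe. The actual non-openness argument (cf.\ the explicit $2\times 2$ computation in the paper's Remark 2, or Behrends' proof) instead tracks how the required perturbation of the product forces certain entries of the factors to be bounded away from zero uniformly in the neighborhood size; without an argument of that type, this direction of the equivalence remains unproven.
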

The above proposition provides a checkable condition which completely characterizes the local openness of the mapping $\mathcal{M}$ at different points when the range of the mapping is the entire space. Now, let us state our result that characterizes the local openness of the mapping $\mathcal{M}$ in its range, i.e., when $k< \min\{m,n\}$.
\begin{theorem}\label{thm:Loc_Open}
	Let $\mathcal{M}(\bW_1,\bW_2) = \bW_1 \bW_2$ denote the matrix multiplication mapping with $\bW_1 \in \mathbb{R}^{m\times k}$ and $\bW_2\in \mathbb{R}^{k\times n}$. Assume $k < \min \{m,n\}$.  Then if $\rank(\bbW_1) \neq \rank(\bbW_2)$, $\mathcal{M}(\cdot,\cdot)$ is not locally open at $(\bbW_1,\bbW_2)$. Else, if $\rank(\bbW_1)=\rank(\bbW_2)$, then the following statements are equivalent:
	\begin{enumerate}[i)]
		\item $\exists \, \tbW_1 \in \mathbb{R}^{m\times k} \mbox{ such that } \tbW_1\bbW_2=\bzero  \mbox{ and } \bbW_1 + \tbW_1 \mbox{ is full column rank.}$
		\vspace{0.1cm}
		\item $ \exists \,\tbW_2 \in \mathbb{R}^{k \times n} \mbox{ such that } \bbW_1\tbW_2=\bzero  \mbox{ and } \bbW_2 + \tbW_2 \mbox{ is full row rank.}$
		\vspace{0.1cm}
		\item $\dim\big(\, \mathcal{N}(\bbW_1) \, \cap \, \mathcal{C}(\bbW_2)\big)=0$. 
		\vspace{0.1cm}
		\item$\dim\big( \, \mathcal{N}(\bbW_2^{\top}) \, \cap  \, \mathcal{C}(\bbW_1^{\top})\, \big)=0$.
		\vspace{0.1cm}
		\item $\mathcal{M}(\cdot,\cdot)$ is locally open at $(\bbW_1,\bbW_2)$ in its range ${\cal R}_{\cal M}$.
	\end{enumerate}
\end{theorem}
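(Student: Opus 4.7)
The plan is to show that the rank-mismatch case is not a separate phenomenon but is subsumed by the failure of condition (iii). Indeed, if $\rank(\bbW_1) < \rank(\bbW_2) =: r_2$, then $\rank(\bar{\bZ}) \leq \rank(\bbW_1) < r_2$, so by the rank-nullity identity applied to the restriction of $\bbW_1$ to $\mathcal{C}(\bbW_2)$,
\[
\dim\bigl(\mathcal{N}(\bbW_1)\cap\mathcal{C}(\bbW_2)\bigr)=r_2-\rank(\bar{\bZ})\geq r_2-\rank(\bbW_1)>0.
\]
Thus it suffices to establish the implication ``not (iii) $\Rightarrow$ not (v)'' in full generality, and to prove the cycle (i)$\Leftrightarrow$(iii)$\Leftrightarrow$(iv)$\Leftrightarrow$(ii) and (iii)$\Rightarrow$(v) under the rank-equality assumption.

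\textbf{Linear-algebraic equivalences.} Write $r=\rank(\bbW_1)=\rank(\bbW_2)$. The identity $\dim\mathcal{N}(\bbW_1)+\dim\mathcal{C}(\bbW_2)=(k-r)+r=k$ combined with the dimension formula gives $\dim(\mathcal{N}(\bbW_1)\cap\mathcal{C}(\bbW_2))=0 \iff \mathcal{N}(\bbW_1)+\mathcal{C}(\bbW_2)=\mathbb{R}^k$; taking orthogonal complements and using $\mathcal{N}(\bbW_1)^\perp=\mathcal{C}(\bbW_1^T)$, $\mathcal{C}(\bbW_2)^\perp=\mathcal{N}(\bbW_2^T)$ yields (iii)$\Leftrightarrow$(iv). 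For (iii)$\Rightarrow$(i), pick a complement $S$ of $\mathcal{C}(\bbW_2)$ in $\mathbb{R}^k$ (dimension $k-r$) and a complement $T$ of $\bbW_1(\mathcal{C}(\bbW_2))$ in some $k$-dimensional subspace of $\mathbb{R}^m$ (possible since $m\geq k$); define $\tbW_1$ to vanish on $\mathcal{C}(\bbW_2)$ and map $S$ isomorphically onto $T$. Then $\tbW_1\bbW_2=0$ and $(\bbW_1+\tbW_1)$ is injective on $\mathcal{C}(\bbW_2)\oplus S=\mathbb{R}^k$, hence full column rank. Conversely, for (i)$\Rightarrow$(iii), any $u=\bbW_2 y$ in the intersection satisfies $(\bbW_1+\tbW_1)u=\bbW_1 u+\tbW_1\bbW_2 y=0$, forcing $u=0$. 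The proof of (ii)$\Leftrightarrow$(iv) is the symmetric argument on transposes.

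\textbf{Sufficiency (iii)$\Rightarrow$(v).} Using the gauge symmetry $(\bbW_1,\bbW_2)\mapsto(\bbW_1\bQ,\bQ^{-1}\bbW_2)$ with $\bQ\in GL(k)$, reduce to the block form $\bbW_1=[\bA_1,\ \bb{0}_{m\times(k-r)}]$, $\bbW_2=\begin{bmatrix}\bA_2 \\ \bb{0}_{(k-r)\times n}\end{bmatrix}$ with $\bA_1,\bA_2$ of rank $r$; here condition (iii) holds trivially. For $\widetilde{\bZ}\in\mathcal{R}_{\mathcal{M}}$ near $\bar{\bZ}=\bA_1\bA_2$, split $\widetilde{\bZ}=\widetilde{\bA}_1\widetilde{\bA}_2+\widetilde{\bF}_1\widetilde{\bF}_2$, where $(\widetilde{\bA}_1,\widetilde{\bA}_2)$ is a stable rank-$r$ factorization close to $(\bA_1,\bA_2)$ (obtained via the implicit function theorem applied to the non-degenerate rank-$r$ factorization map, or equivalently via continuity of the top-$r$ truncated SVD at $\bar{\bZ}$), and $\widetilde{\bF}_1\widetilde{\bF}_2$ is a rank-$\leq(k-r)$ residual of norm $O(\sqrt{\|\widetilde{\bZ}-\bar{\bZ}\|})$, factored through its SVD with balanced singular-vector norms. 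Setting $\tbW_1=[\widetilde{\bA}_1,\widetilde{\bF}_1]$ and $\tbW_2=\begin{bmatrix}\widetilde{\bA}_2 \\ \widetilde{\bF}_2\end{bmatrix}$ gives $\tbW_1\tbW_2=\widetilde{\bZ}$ with both factors converging to $(\bbW_1,\bbW_2)$ as $\widetilde{\bZ}\to\bar{\bZ}$.

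\textbf{Necessity not (iii)$\Rightarrow$ not (v), and main obstacle.} Pick nonzero $u=\bbW_2 y\in\mathcal{N}(\bbW_1)\cap\mathcal{C}(\bbW_2)$ and a vector $w\in\mathbb{R}^m$ with a nonzero component orthogonal to $\mathcal{C}(\bbW_1)$; construct a rank-$\leq k$ perturbation $\widetilde{\bZ}_\epsilon$ whose column (resp.\ row) space is forced to contain a direction incompatible with $\mathcal{C}(\bbW_1)$ (resp.\ $\mathcal{R}(\bbW_2)$) in a topologically stable way. The key is the relation $\tbW_1(\tbW_2 y)=\widetilde{\bZ}_\epsilon y$: since $\tbW_2 y\to u$ and $\bbW_1 u=0$, any close factorization forces either $\Delta_1 u$ or $\bbW_1\Delta_2 y$ to reproduce an obstructed direction, leading to a contradiction with the stability of column/row spaces at the relevant ranks. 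The main obstacle is making this obstruction genuinely topological (not merely quantitative): one must exhibit a fixed radius $\epsilon_0>0$ and a sequence $\widetilde{\bZ}_n\to\bar{\bZ}$ in $\mathcal{R}_{\mathcal{M}}$ with no factorization inside $\mathcal{B}_{\epsilon_0}(\bbW_1,\bbW_2)$, since Hölder-type factorizations of size $O(\sqrt{\epsilon})$ would still establish local openness. The argument must therefore construct $\widetilde{\bZ}_n$ whose row or column space, viewed projectively, sits at positive distance from every nearby admissible factorization's row/column space, a subtle geometric claim that constitutes the heart of the theorem.
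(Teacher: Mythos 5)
Your handling of the rank-mismatch case and of the equivalences (i)--(iv) is essentially sound, though note a structural slip: when $\rank(\bbW_1)>\rank(\bbW_2)$ condition (iii) can perfectly well hold (e.g.\ $\bbW_1$ full column rank, $\bbW_2$ rank deficient), so ``not (iii) $\Rightarrow$ not (v)'' alone does not subsume the rank-mismatch case; you also need the transposed implication ``not (iv) $\Rightarrow$ not (v)'', since (iii) and (iv) are equivalent only under rank equality. Your sufficiency argument --- reduce to block form via a $GL(k)$ gauge, peel off the top-$r$ truncated SVD (stable because $\sigma_r(\bar{\bZ})>0=\sigma_{r+1}(\bar{\bZ})$), and factor the rank-$\le k-r$ residual with balanced factors of norm $\mathcal{O}(\sqrt{\delta})$ --- is a legitimate and arguably cleaner alternative to the paper's explicit construction, though it yields $\delta=\mathcal{O}(\epsilon^2)$ rather than the paper's $\delta=\mathcal{O}(\epsilon)$ (immaterial for local openness).

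The genuine gap is the necessity direction, ``not (iii) $\Rightarrow$ not (v)'', which you yourself identify as ``the heart of the theorem'' and leave as an unproved ``subtle geometric claim.'' Without it, nothing rules out local openness at points violating (iii), so the equivalence with (v) is not established. The paper does not prove this by a from-scratch topological obstruction; it reduces to the already-characterized full-range case. Concretely, after the SVD change of basis it observes that local openness of $\mathcal{M}$ in $\mathcal{R}_{\mathcal{M}}$ forces local openness of the restricted map $(\bW_1,\bW_2^1)\mapsto\bW_1\bW_2^1$, where $\bW_2^1$ consists of the first $k$ columns of $\bW_2$; since that map has range all of $\mathbb{R}^{m\times k}$, Proposition 1 (Behrends) applies, and because $m>k$ its first alternative (making $\bW_1+\tbW_1$ full \emph{row} rank) is impossible, the second alternative forces condition (ii), and the transposed argument forces condition (i); Lemma \ref{lm:Y_2=0} then yields rank equality. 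You should either carry out this reduction or actually construct the obstruction you describe; as written, the proposal proves only one direction of the characterization.
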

Note that the proof of Theorem~\ref{thm:Loc_Open}, which can be found in Appendix \ref{local-open-app}, is different than the proof of Proposition~\ref{Prev_result}, as in the former we need to work with the set of low-rank matrices. Besides, the conditions in Theorem~\ref{thm:Loc_Open}  are  different than the ones in Proposition~\ref{Prev_result}. For example, while conditions i) and ii) are equivalent in the  rank-deficient case, they are not equivalent in the full-rank case. Moreover, unlike the full-rank case, the condition $\rank(\bbW_1) = \rank(\bbW_2)$ is necessary for local openness in the former result.\\

\noindent\textbf{How much perturbation is needed?} As previously mentioned, local openness can be described in terms of perturbation analysis. In particular, $\mathcal{M}(\cdot , \cdot)$ is said to be locally open at $({{\bW}_1},{{\bW}_2})$ if for a given $\epsilon > 0$,  there exists $\delta>0$ such that for any $\widetilde{\bZ}=\bZ + {\bR}_{\delta} \in \mathcal{R}_{\mathcal{M}}$  with $\|{\bR}_{\delta}\| \leq \delta$, there exists $\tbW_1$, $\widetilde{\bW}_2$ with $\|\tbW_1\|\leq \epsilon$, $\|\widetilde{\bW}_2\| \leq \epsilon$ such that $\widetilde{\bZ}=(\bW_1+\tbW_1)({\bW}_2+\widetilde{\bW}_2)$. {\color{black} Given $\epsilon>0$, we show that for any locally open $({{\bW}_1},{{\bW}_2})$ we need to choose $\delta = \Theta (\epsilon)$.} The details of our analysis can be found in the proof of Theorem \ref{thm:Loc_Open} in Appendix~\ref{local-open-app}

Now we state our result for the mapping $\mathcal{M}_{+}$.
\begin{theorem}\label{thm:Loc_Open_general} Let $\mathcal{M}_+(\bW) = \bW\bW^{\top}$ be the symmetric matrix multiplication mapping. Then $\mathcal{M}_{+}(\cdot)$ is  open in its range $\mathcal{R}_{\mathcal{M}_+}$. 
\end{theorem}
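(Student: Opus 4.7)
I would establish local openness of $\mathcal{M}_+$ at every $\bbW \in \mathbb{R}^{n\times k}$, which is equivalent to openness of $\mathcal{M}_+$ on its range. The essential idea is to exploit the right action of $O(k)$: since $\mathcal{M}_+(\bb{W}\bb{O}) = \mathcal{M}_+(\bb{W})$ for every $\bb{O} \in O(k)$, I can freely rotate any candidate preimage. Fix $\bbW$ and set $\bar{\bZ} \triangleq \bbW\bbW^T$. For $\widetilde{\bZ} \in \mathcal{R}_{\mathcal{M}_+}$, the definition of $\mathcal{R}_{\mathcal{M}_+}$ guarantees some (non-canonical) factorization $\tbW_0\tbW_0^T = \widetilde{\bZ}$---obtained for instance via the eigendecomposition of $\widetilde{\bZ}$---and I would then define
\[
\tbW \;\triangleq\; \tbW_0\,\bb{O}^{*}, \qquad \bb{O}^{*} \in \arg\min_{\bb{O} \in O(k)} \|\tbW_0\bb{O} - \bbW\|,
\]
which automatically satisfies $\tbW\tbW^T = \widetilde{\bZ}$ by orthogonality. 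The whole proof thus reduces to showing that this Procrustes minimum tends to zero as $\widetilde{\bZ}\to \bar{\bZ}$.

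\textbf{Key computation.} The classical closed form for the orthogonal Procrustes problem gives
\[
\min_{\bb{O} \in O(k)} \|\tbW_0\bb{O} - \bbW\|^2 \;=\; \|\tbW_0\|^2 + \|\bbW\|^2 - 2\,\|\bbW^T\tbW_0\|_{*},
\]
where $\|\cdot\|_{*}$ is the nuclear norm. The crucial observation is that each term on the right depends only on $\widetilde{\bZ}$ (and $\bbW$), and not on the non-unique factorization $\tbW_0$: one has $\|\tbW_0\|^2 = \tr(\widetilde{\bZ})$, and using the identity $\|\bb{M}\|_{*} = \tr((\bb{M}^T\bb{M})^{1/2})$ with $\bb{M} = \bbW^T\tbW_0$,
\[
\|\bbW^T\tbW_0\|_{*} \;=\; \tr\!\bigl((\bbW^T\widetilde{\bZ}\,\bbW)^{1/2}\bigr).
\]
The Procrustes minimum is therefore the well-defined function
\[
g(\widetilde{\bZ}) \;\triangleq\; \tr(\widetilde{\bZ}) + \tr(\bar{\bZ}) - 2\,\tr\!\bigl((\bbW^T\widetilde{\bZ}\,\bbW)^{1/2}\bigr)
\]
of $\widetilde{\bZ}$ alone, which completely sidesteps the absence of any continuous selection $\widetilde{\bZ}\mapsto \tbW_0$.

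\textbf{Continuity and conclusion.} Continuity of $g$ follows from linearity of the trace together with the classical continuity of the PSD matrix square root on the PSD cone. A direct evaluation at $\widetilde{\bZ}=\bar{\bZ}$ yields $\bbW^T\bar{\bZ}\,\bbW = (\bbW^T\bbW)^2$, whose (unique) PSD square root is $\bbW^T\bbW$, so $g(\bar{\bZ}) = 2\tr(\bar{\bZ}) - 2\tr(\bbW^T\bbW) = 0$. For any prescribed $\epsilon>0$, continuity provides $\delta>0$ such that $g(\widetilde{\bZ})<\epsilon^2$ whenever $\|\widetilde{\bZ}-\bar{\bZ}\|<\delta$ and $\widetilde{\bZ}\in\mathcal{R}_{\mathcal{M}_+}$, giving $\|\tbW-\bbW\|<\epsilon$ and thereby local openness at $\bbW$.

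\textbf{Main obstacle.} I do not anticipate a serious technical difficulty; the one conceptual move is recognizing that the Procrustes minimum is an $O(k)$-invariant function of $\widetilde{\bZ}$ alone, which avoids the absence of a continuous selection of square-root factorizations in the rank-deficient regime (a selection that indeed fails when eigenvalues coalesce at zero, and which is why a naive eigendecomposition-based proof would break down).
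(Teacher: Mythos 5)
Your proof is correct, and it takes a genuinely different route from the paper's. The paper first reduces to the case where $\bbW\bbW^T$ is diagonal via an SVD change of coordinates, then constructs the perturbed factor explicitly: in the full-rank block it solves the quadratic matrix equation $\bb{P}+\bb{P}^T+\bb{P}\bb{\Sigma}^{-1}\bb{P}^T=\bb{R}$ for an upper-triangular $\bb{P}$ by a recursive elimination algorithm with the bound $\|\bb{P}\|_\infty\leq 3\delta$, and in the rank-deficient case it block-decomposes the perturbation, uses the Schur complement to characterize which perturbations stay in $\mathcal{R}_{\mathcal{M}_+}$, and assembles $\bb{A}_\epsilon$ from two pieces with explicit norm estimates. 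Your argument instead takes an arbitrary factorization $\tbW_0\tbW_0^T=\widetilde{\bZ}$ and aligns it with $\bbW$ by the orthogonal Procrustes rotation, observing that the closed-form Procrustes value $\tr(\widetilde{\bZ})+\tr(\bar{\bZ})-2\tr\bigl((\bbW^T\widetilde{\bZ}\bbW)^{1/2}\bigr)$ is a continuous, $O(k)$-invariant function of $\widetilde{\bZ}$ alone that vanishes at $\bar{\bZ}$; all the identities you invoke (the nuclear-norm form of the Procrustes minimum, $\|\bbW^T\tbW_0\|_*=\tr((\bbW^T\widetilde{\bZ}\bbW)^{1/2})$, and $(\bbW^T\bar{\bZ}\bbW)^{1/2}=\bbW^T\bbW$) check out, and the existence of the $n\times k$ factor $\tbW_0$ is guaranteed by $\rank(\widetilde{\bZ})\leq k$ and $\widetilde{\bZ}\succeq 0$. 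Your approach is shorter, avoids any case split on the rank of $\bbW$, and neatly sidesteps the non-existence of a continuous square-root selection; what it gives up is the quantitative content of the paper's construction, which produces the perturbed factor explicitly and yields the advertised perturbation bound $\delta=\mathcal{O}(\epsilon)$, whereas your continuity argument is purely qualitative (though it could be made quantitative via Hölder continuity of the PSD square root).
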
 
\noindent\textbf{How much perturbation is needed?} A perturbation bound for the symmetric matrix multiplication was also derived, details in Appendix~\ref{PSD-local-open-app}. {\color{black} Specifically, given an $\epsilon>0$, we show that for any $\bW$ the chosen $\delta$ is of order $\epsilon$, i.e.,  $\delta = \Theta(\epsilon)$.}

\begin{remark} Since the mapping $\mathcal{M}_{+}$ is open in ${\cal R}_{{\cal M}_{+}}$, then by Observation \ref{lm:LocalMinMapping}, any local minimum of the optimization problem
	$\displaystyle{\min_{\bW \in \mathcal{W}}} \;\; \ell(\bW\bW^{\top})
	$ leads to a  local minimum in the optimization problem
	$ \displaystyle{\min_{\bZ \in \mathcal{Z}}} \;\; \ell(\bZ)$ where $\mathcal{Z} = \{\bZ \;\mid \; \bZ \succeq 0, \;\bZ = \bW\bW^{\top}, \;\bW \in \mathcal{W}\}$. {\color{black} Consequently, if every local minimum of the optimization problem on $\bZ$ is globally optimal, then every local minimum of the first optimization problem is global.} This provides a simple and intuitive proof for  the Burrer-Monteiro result \cite[Proposition~2.3]{burer2005local}; moreover, it extends it by relaxing  the continuity assumption on $\ell(\cdot)$.
\end{remark}

\begin{remark} It follows from Theorem \ref{thm:Loc_Open} that when ${{\bW}_1}$ is full column rank and ${{\bW}_2}$ is full row rank, the mapping $\mathcal{M}(\cdot, \cdot)$ is locally open at $({{\bW}_1},{{\bW}_2})$. This special case of our result was observed in other works; see, e.g.,  \cite[Proposition~4.2]{sun2015matrix}. 
\end{remark}

In the next sections, we use our local openness result to characterize the cases where the local optima of various training optimization problems of the form \eqref{Prob_general} are globally optimal. 

\section{Non-linear Deep Neural Network with a Pyramidal Structure}\label{sec:non-linear-local-global}
{\color{black} In this section, we utilize Observation~\ref{lm:LocalMinMapping} to study the local/global optima equivalence in non-linear deep neural networks having a specific pyramidal structure. Towards that end,} consider the non-linear deep neural network optimization problem with a pyramidal structure
\begin{equation}\label{Non_Linear_SE}
\min_{\bW} \, \, \ell \big(\mathcal{F}_h(\bW)\big)
\end{equation}
with
\[\mathcal{F}_i(\bW) \triangleq {\bsigma}_i \big({\bW}_{i}\mathcal{F}_{i-1}(\bW)\big), \textrm{ for } i \in  \{2,\ldots,h\}, \quad \mbox{and} \quad \mathcal{F}_1(\bW) \triangleq {\bsigma}_1({\bW}_1\bX)
\]
where ${\bsigma}_i(\cdot):\mathbb{R}\mapsto \mathbb{R}$ is a continuous and strictly monotone activation function applied component-wise to the entries of each layer, i.e., ${\bsigma}_i(\bA) = [{\bsigma}_i({\bA}_{jk})]_{j,k}$. Here $\bW=\big( {\bW}_i \big)_{i=1}^h$ where $\, \, {\bW}_i \in \mathbb{R}^{d_i \times d_{i-1}}$ is the weight matrix of layer $i$, and $\bX \in \mathbb{R}^{d_0 \times n}$ is the input training data. In this section, we consider the pyramidal network structure with  $d_0 >n$ and  $d_i \leq d_{i-1}$ for $1 \leq i\leq h$; see \cite{nguyen2017loss} for more details on these types of networks. In their paper, \cite{nguyen2017loss} show that every critical point $\bar{\bW}$ of problem (\ref{Non_Linear_SE}) with $\bar{\bW}_i$'s being full row rank is a global optimum when both ${\bsigma} (\cdot)$ and $\ell (\cdot)$ are differentiable. In this section, we relax the differentiability assumption on both the activation and loss functions and  {\color{black} establish local/global optima equivalence for problem~\eqref{Non_Linear_SE}.}\\ 

{\color{black} We first show that when $\bX$ is full column rank and the functions ${\bsigma}_i$'s are all continuous and strictly monotone, the image set of the mapping $\mathcal{F}_h$ is convex. First notice that the full rankness of $\bX$ and continuity and strict monotinicity of $\bsigma_1$ {\color{black}yield} a range of $\mathbb{R}^{d_1 \times n}$ for ${\cal F}_1(\bW) = \bsigma(\bW_1 \bX)$. By the same reasoning, we get that the range of the mapping ${\cal F}_i(\bW)$ is $\mathbb{R}^{d_i \times n}$. Hence, the image set of the mapping $\mathcal{F}_h$ is the full space $\mathbb{R}^{d_n \times n}$ which is clearly convex.} {\color{black}Furthermore, if $\ell(\cdot)$ is convex,} then every local optimum of the corresponding auxiliary optimization problem~\eqref{Prob_general_formulated} is global. We now show that when ${\bW}_i$'s are all full row rank and  the functions ${\bsigma}_i$'s are all strictly monotone, the mapping $\mathcal{F}_h$ is  locally open at $\bW$.
\begin{lemma}\label{lem:Pyramidal}
	Assume the functions ${\bsigma}_i(\cdot):\mathbb{R}\mapsto \mathbb{R}$ are all continuous and strictly monotone. If $\bar{\bW}_i$'s are all full row rank, then the mapping $\mathcal{F}_h$ defined in (\ref{Non_Linear_SE}) is locally open at the point $\bar{\bW} = (\bar{\bW}_1,\ldots, \bar{\bW}_h)$.
\end{lemma}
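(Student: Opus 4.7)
The plan is to proceed by induction on the layer index $i$, proving the stronger statement that the partial map
$G_i(\bb{W}_1,\ldots,\bb{W}_i)\triangleq \mathcal{F}_i(\bb{W})$
is locally open at $(\bbW_1,\ldots,\bbW_i)$ in its range $\mathcal{R}_i$, where $(\bbW_1,\ldots,\bbW_h)$ denotes the fixed point of the lemma. The base case $i=1$ is immediate: $\bb{W}_1\mapsto \bb{W}_1\bb{X}$ is a continuous linear map between finite-dimensional spaces, hence open onto its linear image by the finite-dimensional open mapping theorem, and post-composition with the componentwise homeomorphism $\bsigma_1$ (strictly monotone continuous functions on $\mathbb{R}$ are homeomorphisms) preserves openness, so $G_1$ is in fact globally open onto $\mathcal{R}_1$.

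For the inductive step, I factor
\[
G_{i+1}=\bsigma_{i+1}\circ \Pi \circ \mathbf{P}_i,
\]
where $\mathbf{P}_i(\bb{W}_1,\ldots,\bb{W}_{i+1})\triangleq (G_i(\bb{W}_1,\ldots,\bb{W}_i),\,\bb{W}_{i+1})$ and $\Pi(\bb{Z},\bb{W})\triangleq \bb{W}\bb{Z}$. The inductive hypothesis combined with the trivial openness of the identity on the last block makes $\mathbf{P}_i$ locally open into $\mathcal{R}_i\times \mathbb{R}^{d_{i+1}\times d_i}$, while $\bsigma_{i+1}$ is a homeomorphism. Since compositions of locally open maps are locally open, the task reduces to showing that $\Pi$, restricted to $\mathcal{R}_i\times \mathbb{R}^{d_{i+1}\times d_i}$, is locally open at $(\bbZ_i,\bbW_{i+1})$ in its range, where $\bbZ_i\triangleq G_i(\bbW_1,\ldots,\bbW_i)$.

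For this crux step, the pyramidal hypothesis $d_{i+1}\leq d_i$ guarantees $d_i\geq \min\{d_{i+1},n\}$, so Proposition~\ref{Prev_result} applies; taking $\tbW=\mathbf{0}$ verifies its condition~2 because $\bbW_{i+1}$ is full row rank. The explicit construction behind the full-row-rank case of Proposition~\ref{Prev_result} matches any target preactivation $\widetilde{\bb{A}}$ close to $\bbA_{i+1}=\bbW_{i+1}\bbZ_i$ via
\[
\widetilde{\bb{W}}_{i+1}=\bbW_{i+1},\qquad \widetilde{\bb{Z}}_i=\bbZ_i+\bbW_{i+1}^{+}(\widetilde{\bb{A}}-\bbA_{i+1}),
\]
where $\bbW_{i+1}^{+}=\bbW_{i+1}^{T}(\bbW_{i+1}\bbW_{i+1}^{T})^{-1}$ is the right pseudo-inverse, yielding $\|\widetilde{\bb{Z}}_i-\bbZ_i\|=\mathcal{O}(\|\widetilde{\bb{A}}-\bbA_{i+1}\|)$ and hence the $\delta=\mathcal{O}(\varepsilon)$ bound.

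The main obstacle is verifying that the $\widetilde{\bb{Z}}_i$ so constructed actually lies in $\mathcal{R}_i$, so that the inductive hypothesis on $G_i$ can realize it via nearby weights. I would address this by combining (a) the fact that $\widetilde{\bb{A}}$ belongs to the restricted range $\Pi(\mathcal{R}_i\times \mathbb{R}^{d_{i+1}\times d_i})$, so some factorization $\widetilde{\bb{A}}=\bb{W}^{\star}\bb{Z}^{\star}$ with $\bb{Z}^{\star}\in \mathcal{R}_i$ already exists, with (b) the $(d_i-d_{i+1})n$-dimensional freedom $\widetilde{\bb{Z}}_i\mapsto \widetilde{\bb{Z}}_i+\bb{N}$ along the kernel $\{\bb{N}:\bbW_{i+1}\bb{N}=\mathbf{0}\}$ that preserves $\bbW_{i+1}\widetilde{\bb{Z}}_i=\widetilde{\bb{A}}$. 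Using (a), (b), and the local openness of $G_i$ around $\bbZ_i$ supplied by the inductive hypothesis, one can select a solution $\widetilde{\bb{Z}}_i$ on the affine fiber $\{\bb{Z}:\bbW_{i+1}\bb{Z}=\widetilde{\bb{A}}\}$ that remains in $\mathcal{R}_i$ and close to $\bbZ_i$. Once such a range-respecting $\widetilde{\bb{Z}}_i$ is secured, the inductive hypothesis supplies nearby $(\widetilde{\bb{W}}_1,\ldots,\widetilde{\bb{W}}_i)$ with $G_i(\widetilde{\bb{W}}_1,\ldots,\widetilde{\bb{W}}_i)=\widetilde{\bb{Z}}_i$, and together with $\widetilde{\bb{W}}_{i+1}=\bbW_{i+1}$ these produce the required small perturbation of $\bbW$ mapping to $\widetilde{\bb{Z}}_{i+1}=\bsigma_{i+1}(\widetilde{\bb{A}})$, closing the induction.
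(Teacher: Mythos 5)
Your proof is correct and follows essentially the same route as the paper's: induction over layers, with the base case handled by openness of linear maps composed with the homeomorphisms $\bsigma_i$, and the inductive step handled by invoking Proposition~\ref{Prev_result} through the full-row-rankness of $\bb{W}_{i+1}$ and the composition property of locally open maps. The one place you go beyond the paper --- worrying whether the perturbed $\widetilde{\bb{Z}}_i$ produced by the pseudo-inverse construction actually lies in the range $\mathcal{R}_i$ so the inductive hypothesis applies --- is a genuine subtlety that the paper's proof glosses over, and your somewhat indirect fix can be replaced by the simple observation that, since $\bb{X}$ is full column rank, $\mathcal{R}_i$ is an open box $I_i^{d_i\times n}$ (with $I_i$ the open interval $\bsigma_i(\mathbb{R})$), so any sufficiently small perturbation of $\bar{\bb{Z}}_i$ automatically remains in it.
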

Before proving this result, we would like to remark that many of the popular activation functions such as logit, tangent hyperbolic, and leaky ReLU are strictly monotone and satisfy the assumptions of this lemma. {\color{black} Our result shows that using such activation functions on pyramidal neural networks yield underlying landscapes with all local minima having full-rank weight parameters being global.} {\color{black} Our result does not hold when using ReLU activation function as it is not an open mapping.}
\begin{proof}
	We prove the result by means of induction. {\color{black} First notice that ${\cal F}_1$ can be seen as the composition of the $\bsigma_1(\cdot)$ and the linear mapping $\bar{\bW} \, \bX$. Using the strict monotonicity assumption, we get that $\bsigma_1(\cdot)$ is invertible. By the definition of open mappings, it follows that $\bsigma_1(\cdot)$ is an open mapping. Hence, using the local openness property of linear maps and the openness of ${\bsigma}_1 (\cdot)$, the composition property of open maps detailed in Property~\ref{prop:composition} directly implies that ${\cal F}_1$ is open.} Now assume $\mathcal{F}_{k-1}\Big(\big(\bar{\bW}_i\big)_{i=1}^{k-1}\Big)$ is locally open at $\big(\bar{\bW}_i \big)_{i=1}^{k-1}$, then using Proposition \ref{Prev_result} for a full rank $\bar{\bW}_k$, the mapping $\bar{\bW}_k\mathcal{F}_{k-1}\Big(\big(\bar{\bW}_i \big)_{i=1}^{k-1}\Big)$ is locally open at $\big(\bar{\bW}_k, (\bar{\bW}_i)_{i=1}^{k-1}\big)$. Finally, the composition property of open maps and strict monotonicity of ${\bsigma}_k (\cdot)$ imply that $\mathcal{F}_k\Big((\bar{\bW}_i)_{i=1}^{k}\Big)$ is locally open at $\big(\bar{\bW}_i \big)_{i=1}^k$.
\end{proof}

Lemma~\ref{lem:Pyramidal} in conjunction with  Observation \ref{lm:LocalMinMapping} implies that if ${\bbW}$ is a local optimum of problem (\ref{Non_Linear_SE}) with ${\bbW}_i$'s being full row rank, then $\bar{\bZ}=\mathcal{F}_h(\bbW)$ is a local optimum of the corresponding auxiliary problem 
$\displaystyle{\min_{\bZ \in {\cal Z}}} \,\, \ell(\bZ)
$ where ${\cal Z}$ is a convex set.  Consequently, $\bar{\bZ}$ is a global optimum of problem (\ref{Non_Linear_SE}) when the loss function $\ell(\cdot)$ is convex. {\color{black} Given an oracle that returns a local minima of problem~\eqref{Non_Linear_SE}, one can check whether ${\bbW}_i$'s are full row rank. If true, we guarantee that the point is globally optimal.} Unlike the results in~\cite{nguyen2017loss}, our lemma holds for non-differentiable activation and loss functions. A popular activation function that is strictly monotone and not differentiable is the Leaky ReLU, for which our result holds.

\section{Two-Layer Linear Neural Network}\label{sec:two-layer-results}
Consider the two layer linear neural network optimization problem
\begin{equation}\label{Two-layer-SE-loss-2}
\min_{\bW} \, \,\,\, \dfrac{1}{2} \|{\bW}_2{\bW}_1\bX \, - \, \bY \, \|^2,
\end{equation}
where ${\bW}_2 \in  \mathbb{R}^{d_2 \times d_1}$,  and ${\bW}_1 \in \mathbb{R}^{d_1 \times d_0}$ are weight matrices, $\bX \in \mathbb{R}^{d_0 \times n}$ is the input data, and $\bY \in \mathbb{R}^{d_2 \times n}$ is the target training data. Using our transformation, the corresponding auxiliary optimization problem can be written as
\begin{equation}\label{Two-layer-SE-loss-formulated}
\begin{array}{ll}
\min_{\bZ}  \,\; \dfrac{1}{2}||\bZ \bX- \bY||^2 \quad\quad \st \quad \rank(\bZ) \leq \min\{d_2, d_1, d_0\}
\end{array}.
\end{equation}

\cite[Theorem~2.3]{kawaguchi2016deep} shows that when $\bX {\bX}^{\top}$ and $\bY {\bX}^{\top}$ are full rank, $d_2 \leq d_0$, and when $\bY {\bX}^{\top}(\bX {\bX}^{\top})^{-1}\bX {\bY}^{\top}$ has $d_2$ distinct eigenvalues, every local optimum of {\color{black} \eqref{Two-layer-SE-loss-2}} is global and all saddle points are second order saddles. While the local/global equivalence result holds for deeper networks, the property that all saddles are second order does not hold in that case. Another result by
\cite[Theorem~2.2]{yun2017global} show that when $\bX {\bX}^{\top}$, $\bY {\bX}^{\top}$, and $\bY {\bX}^{\top}(\bX {\bX}^{\top})^{-1}\bX {\bY}^{\top}$ are full rank, every local optimum {\color{black} of \eqref{Two-layer-SE-loss-2}} is global. 
In this section, without any assumptions on both $\bX$ and $\bY$, we use local openness to show that the latter result holds for $2$-layer linear networks. Moreover, we show that every degenerate local optima is global even when replacing the square loss error by a general convex loss function, see Corollary \ref{cor: degenerate2layer}. {\color{black} We start by relaxing the full rankness assumption on $\bX$.

	\begin{lemma}\label{lm:Relax-X-Assumption}
		Every local minimum of problem (\ref{Two-layer-SE-loss-formulated}) is global.
	\end{lemma}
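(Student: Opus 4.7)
Plan. I will case-split on $r := \min\{d_0, d_1, d_2\}$ and on $s := \rank(\bar{\bb{Z}})$, and reduce the analysis to the classical best rank-$r$ matrix approximation problem. If $r \geq \min\{d_0, d_2\}$, then every $\bb{Z} \in \mathbb{R}^{d_2 \times d_0}$ automatically satisfies $\rank(\bb{Z}) \leq \min\{d_0, d_2\} \leq r$, so the rank constraint in (\ref{Two-layer-SE-loss-formulated}) is vacuous; the problem is then an unconstrained convex quadratic and every local minimum is a global minimum. In the remaining case $r = d_1 < \min\{d_0, d_2\}$, I let $\bar{\bb{Z}}$ be a local minimum and handle the sub-cases $s < r$ and $s = r$ separately.

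For $s < r$: for any $\bb{a} \in \mathbb{R}^{d_2}$ and $\bb{b} \in \mathbb{R}^{d_0}$, the perturbation $\bar{\bb{Z}} \pm t\,\bb{a}\bb{b}^{\top}$ satisfies $\rank \leq s+1 \leq r$ and is therefore feasible for all small $|t|$. Local minimality applied in both $\pm t$ directions yields $\bb{a}^{\top} \nabla f(\bar{\bb{Z}}) \bb{b} = 0$ for every $\bb{a},\bb{b}$, hence $\nabla f(\bar{\bb{Z}}) = 0$. Since $f(\bb{Z}) = \tfrac{1}{2}\|\bb{Z}\bb{X} - \bb{Y}\|^2$ is convex in $\bb{Z}$, $\bar{\bb{Z}}$ is an unconstrained (and, being feasible, also a constrained) global minimizer. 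For $s = r$: I use the compact SVD $\bb{X} = \bb{U}_X \bb{\Sigma}_X \bb{V}_X^{\top}$ with $q = \rank(\bb{X})$ to write
\[
\|\bb{Z}\bb{X} - \bb{Y}\|^2 = \|\bb{Z}\bb{U}_X\bb{\Sigma}_X - \bb{Y}\bb{V}_X\|^2 + \|\bb{Y}(\bb{I} - \bb{V}_X\bb{V}_X^{\top})\|^2.
\]
Setting $\bb{T} := \bb{Z}\bb{U}_X\bb{\Sigma}_X \in \mathbb{R}^{d_2 \times q}$ and $\tilde{\bb{M}} := \bb{Y}\bb{V}_X$, the rank-$\leq r$ feasible set for $\bb{Z}$ maps onto $\{\bb{T}:\rank(\bb{T}) \leq r\}$, so (\ref{Two-layer-SE-loss-formulated}) reduces to the classical best rank-$r$ approximation problem $\min_{\rank(\bb{T}) \leq r}\|\bb{T} - \tilde{\bb{M}}\|^2$. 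The standard critical-point analysis on the rank-$r$ manifold (critical points are truncations of $\tilde{\bb{M}}$ to any $r$ of its singular components) shows that the only local minima are truncations to the top $r$ components, which are global minima.

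The main obstacle is transferring local minimality from $\bar{\bb{Z}}$ to $\bar{\bb{T}} := \bar{\bb{Z}}\bb{U}_X\bb{\Sigma}_X$ when $\bb{X}$ is rank-deficient: the ``invisible'' component $\bar{\bb{Z}}(\bb{I} - \bb{U}_X\bb{U}_X^{\top})$ has no effect on $f$ but may consume part of the rank budget, so that $\rank(\bar{\bb{T}}) < \rank(\bar{\bb{Z}}) = r$. I plan to resolve this by a perturbation argument in the spirit of Theorem~\ref{thm:Loc_Open}: any nonzero null component at a rank-$r$ local minimum can be reallocated into $\mathcal{C}(\bb{U}_X)$ to produce a strict decrease of $f$ (unless $\bar{\bb{T}}$ already solves the Eckart-Young problem), contradicting local minimality. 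Once the rows of $\bar{\bb{Z}}$ are shown to lie in $\mathcal{C}(\bb{U}_X)$, the map $\bb{Z} \mapsto \bb{T}$ is a rank-preserving local bijection on the feasible set, $\bar{\bb{T}}$ inherits local minimality from $\bar{\bb{Z}}$, and invoking the Eckart-Young analysis from the previous paragraph completes the proof.
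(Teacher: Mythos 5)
Your overall route is the same as the paper's: take the SVD of $\bb{X}$, split off the constant term $\|(\bb{Y}\bb{V}_{\bb{X}})_{:,r_{\bb{X}}+1:n}\|^2$, and reduce to the unweighted low--rank approximation problem, whose local minima are known to be global (the paper simply cites \cite[Theorem~2.2]{lu2017depth} for this last step, where you invoke the standard critical--point analysis on the rank-$r$ manifold). Your two easy cases are correct and self--contained: when $\min_i d_i \geq \min\{d_0,d_2\}$ the constraint is vacuous and convexity finishes; and when $\rank(\bar{\bb{Z}})<r$ the rank--one perturbations $\bar{\bb{Z}}\pm t\,\bb{a}\bb{b}^T$ are feasible, forcing $\nabla f(\bar{\bb{Z}})=\bb{0}$ and hence global optimality.

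The gap is exactly at the step you yourself flag as ``the main obstacle,'' which is the only part of the lemma not already covered by the full--rank--$\bb{X}$ literature, and your proposal does not close it. Two specific problems. First, the intermediate milestone ``the rows of $\bar{\bb{Z}}$ lie in $\mathcal{C}(\bb{U}_X)$'' is unattainable: whenever the rank budget permits, even a \emph{global} minimizer may carry a nonzero invisible component (take $d_2=d_0=2$, $d_1=1$, $n=1$, $\bb{X}=\bb{e}_1$, $\bb{Y}=\bb{0}$, $\bar{\bb{Z}}=\bb{v}\bb{e}_2^T$), so the argument must remain in the disjunctive form ``either $f(\bar{\bb{Z}})$ already attains the Eckart--Young value, or a feasible descent curve exists.'' Second, producing that descent curve is genuinely nontrivial and is precisely what you defer to ``a perturbation argument in the spirit of Theorem~\ref{thm:Loc_Open}'': the natural lift $\bb{Z}=\bb{T}\bb{M}^{\dagger}+\bar{\bb{Z}}(\bb{I}-\bb{U}_X\bb{U}_X^T)$ of a feasible perturbation $\bb{T}$ of $\bar{\bb{T}}$ can have rank exceeding $r$, and indeed the map $\bb{Z}\mapsto\bb{Z}\bb{U}_X\bb{\Sigma}_X$ restricted to $\{\rank\leq r\}$ fails to be relatively locally open at points such as $\bar{\bb{Z}}=\bb{v}\bb{e}_2^T$ above (no rank--one $\bb{Z}$ with $\bb{Z}_{:,1}=\epsilon\bb{u}$, $\bb{u}\not\parallel\bb{v}$, is close to $\bar{\bb{Z}}$). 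So one cannot argue from feasibility alone; one must use the local minimality of $\bar{\bb{Z}}$ to construct, on the rank-$\leq r$ variety, a curve that rotates the visible part toward a better rank-$\leq r$ approximant while shrinking the invisible part so the rank budget is respected. The paper disposes of the transfer in one line via openness of the full--column--rank linear map $\bb{Z}\mapsto\bb{Z}\bb{U}_{\bb{X}}(\bb{\Sigma}_{\bb{X}})_{:,1:r_{\bb{X}}}$ together with Observation~\ref{lm:LocalMinMapping}; to complete your version you must either justify the relative openness you need at the points you care about or carry out the descent construction explicitly.
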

	\begin{proof}
		Let $ r_{\bX} = \rank(\bX)$ and $\bU_{\bX}{\bSigma}_{\bX}\bV_{\bX}^{\top}$ with $\bU_{\bX} \in \mathbb{R}^{d_0 \times d_0}$, $\bSigma_{\bX} \in \mathbb{R}^{d_0 \times n}$, and $\bV_{\bX} \in \mathbb{R}^{n \times n}$ be a singular value decomposition of $\bX$.{\color{black} Then
		\begin{flalign*}
		\| \bZ\bX - \bY \|^2 & =  \|\bZ \bU_{\bX}\bSigma_{\bX}\bV_{\bX}^{\top} - \bY\|^2\\
		& = \|\left(\bZ \bU_{\bX}\bSigma_{\bX}\bV_{\bX}^{\top} - \bY\right) \bV_{\bX}\|^2\\
		& = \| \bZ \bU_{\bX}\begin{array}{cc}\left[\big(\bSigma_{\bX}\big)_{:,1:r_{\bX}} \big| \bzero\right]\end{array} -\bY\bV_{\bX} \|^2\\
		& = \| \bZ\bU_{\bX}\big(\bSigma_{\bX}\big)_{:,1:r_{\bX}} -\big(\bY\bV_{\bX}\big)_{:,1:r_{\bX}} \|^2 +\underbrace{\|\big(\bY\bV_{\bX}\big)_{:, r_{\bX} +1: n} \|^2}_{\mbox{constant in problem (\ref{Two-layer-SE-loss-formulated})}}.
		\end{flalign*}

where the second and third equalities hold since $\bV_{\bX}^{\top}\bV_{\bX} = \bI$. }Since $\bU_{\bX}\big(\bSigma_{\bX}\big)_{:,1:r_{\bX}}$ is full column rank, then the linear mapping $\bZ\bU_{\bX}\big(\bSigma_{\bX}\big)_{:,1:r_{\bX}}$ is open, and 
{\color{black}\[\rank(\bZ\bU_{\bX}\big(\bSigma_{\bX}\big)_{:,1:r_{\bX}}) \leq \min\{\rank(\bZ), r_{\bX}\} \leq \min\{d_2, d_1, d_0, r_{\bX}\}.\]}
Consequently, every local minimum of (\ref{Two-layer-SE-loss-formulated}) corresponds to a local minimum in problem 
\begin{equation}
\begin{array}{ll}
\min_{\bar{\bZ} \in \mathbb{R}^{d_2 \times r_{\bX}}}  \;\, \dfrac{1}{2}\|\bar{\bZ} - \bar{\bY}\|^2\quad 
\st  \;\; \rank(\bar{\bZ}) \leq \min\{d_2, d_1, d_0, r_{\bX}\}
\end{array},
\end{equation}
where $\bar{\bY} = \big(\bY\bV_{\bX}\big)_{:,1:r_{\bX}}$. The result follows using \cite[Theorem~2.2]{lu2017depth}.
\end{proof}

We next state the main results for problem~\eqref{Two-layer-SE-loss-2}.}

\begin{theorem}\label{thm: Two-layer}
	Every local minimum of problem (\ref{Two-layer-SE-loss-2}) is global. Moreover, every degenerate saddle point of problem (\ref{Two-layer-SE-loss-2}) is a second order saddle.
\end{theorem}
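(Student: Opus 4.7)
The plan is to partition the critical points of (\ref{Two-layer-SE-loss-2}) into non-degenerate and degenerate classes and to tackle each with different tools. Non-degenerate local minima will be handled via the local openness machinery of Theorem~\ref{thm:Loc_Open} and Proposition~\ref{Prev_result}, which lifts the question to the rank-constrained auxiliary problem already resolved in Lemma~\ref{lm:Relax-X-Assumption}. Degenerate critical points will be handled by an explicit construction of a second-order descent direction in the Hessian; this single construction simultaneously proves that degenerate non-global critical points are second-order saddles and, as a consequence, that every degenerate local minimum is actually global.

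For the non-degenerate case, let $(\bb{W}_1,\bb{W}_2)$ be a local minimum with $\rank(\bb{W}_2\bb{W}_1) = \min\{d_0, d_1, d_2\}$. This forces each $\bb{W}_i$ to attain its maximum feasible rank, so that in either dimensional regime ($d_1 \geq \min\{d_0, d_2\}$ or $d_1 < \min\{d_0, d_2\}$) the null-space criterion in Proposition~\ref{Prev_result} or Theorem~\ref{thm:Loc_Open} is automatically satisfied. Hence $\mathcal{M}(\bb{W}_2, \bb{W}_1) = \bb{W}_2\bb{W}_1$ is locally open at $(\bb{W}_2, \bb{W}_1)$ in $\mathcal{R}_\mathcal{M}$, and Observation~\ref{lm:LocalMinMapping} sends $(\bb{W}_1,\bb{W}_2)$ to a local minimum $\bb{Z} := \bb{W}_2\bb{W}_1$ of (\ref{Two-layer-SE-loss-formulated}). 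Lemma~\ref{lm:Relax-X-Assumption} then yields global optimality.

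For the degenerate case, let $(\bb{W}_1,\bb{W}_2)$ be a critical point with $\rank(\bb{W}_2\bb{W}_1) < \min\{d_0, d_1, d_2\}$, and put $\bb{R} := \bb{W}_2\bb{W}_1\bb{X} - \bb{Y}$. The stationarity conditions $\bb{W}_2^T \bb{R} \bb{X}^T = \mathbf{0}$ and $\bb{R} \bb{X}^T \bb{W}_1^T = \mathbf{0}$ simplify the Hessian quadratic form along $(\bb{U}_1,\bb{U}_2)$ to
\[
H(\bb{U}_1,\bb{U}_2) \;=\; \|(\bb{W}_2\bb{U}_1 + \bb{U}_2\bb{W}_1)\bb{X}\|^2 \;+\; 2\langle \bb{R}, \bb{U}_2\bb{U}_1\bb{X}\rangle .
\]
If $\bb{R}\bb{X}^T = \mathbf{0}$, then $\bb{Z} := \bb{W}_2\bb{W}_1$ satisfies the normal equations of the unconstrained least-squares problem and, being feasible for the rank-constrained auxiliary problem, is globally optimal there, so $(\bb{W}_1,\bb{W}_2)$ is global. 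Otherwise, stationarity places $\mathcal{C}(\bb{R}\bb{X}^T)\subseteq \mathcal{N}(\bb{W}_2^T)$ and $\mathcal{C}((\bb{R}\bb{X}^T)^T)\subseteq \mathcal{N}(\bb{W}_1)$, and the rank deficiency of $\bb{W}_2\bb{W}_1$ supplies non-trivial null-space vectors that enable the rank-one ansatz $\bb{U}_1 = a v^T$, $\bb{U}_2 = u b^T$ with $a \in \mathcal{N}(\bb{W}_2)$, $b \in \mathcal{N}(\bb{W}_1^T)$ (or $b \in \mathcal{N}((\bb{W}_1\bb{X})^T)$ when $\mathcal{N}(\bb{W}_1^T)=\{\mathbf{0}\}$) chosen so that $(\bb{W}_2\bb{U}_1 + \bb{U}_2\bb{W}_1)\bb{X} = \mathbf{0}$. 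The Hessian then collapses to $2(b^T a)(u^T \bb{R}\bb{X}^T v)$, and selecting $u, v$ that activate a nonzero entry of $\bb{R}\bb{X}^T$, together with a possible sign flip on $a$ or $b$, makes this strictly negative and exhibits the second-order saddle direction.

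The main obstacle is the case analysis guaranteeing that degeneracy always supplies a compatible pair $(a, b)$ with $b^T a \neq 0$ alongside an activating $(u, v)$. When the natural null-space vectors happen to be orthogonal, or when one of the desired null spaces is trivial (for example, when $\bb{W}_1$ is full row rank in $\mathbb{R}^{d_1\times d_0}$), the rank-one ansatz must be replaced by a richer, possibly rank-two, perturbation. Verifying this in every sub-case requires a careful split on the relative sizes of $d_0, d_1, d_2$ and on which factor carries the rank deficiency, and is the technical crux of the proof.
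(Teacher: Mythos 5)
Your non-degenerate argument is exactly the paper's: Lemma~\ref{lm: Kenji's Result} (via Proposition~\ref{Prev_result}/Theorem~\ref{thm:Loc_Open}) gives local openness, Observation~\ref{lm:LocalMinMapping} transfers the local minimum to problem (\ref{Two-layer-SE-loss-formulated}), and Lemma~\ref{lm:Relax-X-Assumption} finishes. That half is fine.

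The degenerate half has a genuine gap, and it is precisely the one you flag at the end as ``the technical crux'' without resolving it. Your ansatz requires the entire quadratic term $\|(\bbW_2\bb{U}_1+\bb{U}_2\bbW_1)\bb{X}\|^2$ to vanish, which forces you to find $a\in\mathcal{N}(\bbW_2)$ \emph{and} $b$ annihilating $\bbW_1$ (or $\bbW_1\bb{X}$) from the left, \emph{and} additionally $b^Ta\neq 0$. None of these is guaranteed: degeneracy of $\bbW_2\bbW_1$ only supplies a nonzero vector in \emph{one} of $\mathcal{N}(\bbW_2)$, $\mathcal{N}(\bbW_1^T)$ (e.g.\ $\bbW_2$ full column rank forces $a=\mathbf{0}$), and even when both null spaces are nontrivial they can be orthogonal, killing $b^Ta$. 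So as written the construction does not go through, and the promised ``richer, possibly rank-two'' fallback is never supplied.

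The paper's proof avoids all of this with an asymmetric construction that you should compare against. Take a single nonzero $\bb{b}$ in whichever of $\mathcal{N}(\bbW_2)$, $\mathcal{N}(\bbW_1^T)$ is nontrivial (say $\bb{b}\in\mathcal{N}(\bbW_2)$), and use the \emph{same} vector in both perturbations: $\bb{B}_{:,i}=\alpha\bb{b}$ (all other columns zero) and $\bb{A}_{j,:}=\bb{b}^T$ (all other rows zero), where $(i,j)$ indexes a nonzero entry of $\bb{\Delta}\bb{X}^T$. Then $\bbW_2\bb{B}=\mathbf{0}$, so the quadratic term equals $\|\bb{A}\bbW_1\bb{X}\|^2$, a constant in $\alpha$ --- it need not vanish, it only needs to not scale with $\alpha$ --- while the cross term is $2\alpha\|\bb{b}\|^2\langle\bb{X}_{i,:},\bb{\Delta}_{j,:}\rangle$, linear in $\alpha$ with coefficient $\|\bb{b}\|^2\neq 0$ by construction. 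Letting $\alpha\to\pm\infty$ violates second-order stationarity. Reusing $\bb{b}$ makes the inner-product condition automatic and scaling only one factor makes the quadratic term harmless; this is the idea missing from your proposal.
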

\begin{proof}
	{\color{black} A detailed proof of the theorem is relegated to Appendix~\ref{two-layer-app}. We now present a sketch of the proof. We separately considered the cases of degenerate and non-degenerate critical points.	For the former case, we construct a descent direction for critical points that are not global. Such directions are constructed using the null-space of the rank deficient matrices, and can be of independent interest when developing algorithms for training deep neural networks. For the non-degenerate case, it follows by Theorem~\ref{thm:Loc_Open_general} that the matrix product is locally open at a given non-degenerate local minimum. Then by Observation~\ref{lm:LocalMinMapping}, the local minimum can be mapped to a local minimum of problem~\eqref{Two-layer-SE-loss-formulated} which is globally optimal by Lemma~\ref{lm:Relax-X-Assumption}. 
}  
\end{proof}

\begin{corollary}\label{cor: degenerate2layer}
	Let the square loss error in  (\ref{Two-layer-SE-loss-2}) be replaced by a general convex loss function $\ell(\cdot)$. Then every degenerate critical point is either a global minimum or a second order saddle.
\end{corollary}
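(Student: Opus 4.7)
The plan is to redo the degenerate-case argument that underlies the squared-loss version in Theorem \ref{thm: Two-layer}, replacing the squared-loss Hessian computation by the generic convex inequality $\nabla^2\ell \succeq 0$. Fix a degenerate critical point $(\bbW_1,\bbW_2)$, set $\bar{\bb{Z}} = \bbW_2\bbW_1$ and $\bb{G} = \nabla\ell(\bar{\bb{Z}}\bb{X})$. Differentiating in each factor gives the stationarity conditions $\bbW_2^T \bb{G}\bb{X}^T = 0$ and $\bb{G}\bb{X}^T \bbW_1^T = 0$, so $\mathcal{C}(\bb{G}\bb{X}^T) \subseteq \mathcal{N}(\bbW_2^T)$ and the row space of $\bb{G}\bb{X}^T$ lies in $\mathcal{N}(\bbW_1)$. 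If $\bb{G}\bb{X}^T = 0$, then $\bar{\bb{Z}}$ is a stationary point of the convex map $\bb{Z}\mapsto\ell(\bb{Z}\bb{X})$ on $\mathbb{R}^{d_2\times d_0}$ and hence a global minimizer; since $\bar{\bb{Z}} = \bbW_2\bbW_1$ is achieved by the factorization, $(\bbW_1,\bbW_2)$ is a global optimum and we are done. So from now on I may assume $\bb{G}\bb{X}^T \neq 0$.

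Pick a top singular pair $(\bb{v},\bb{u})$ of $\bb{G}\bb{X}^T$ with $\|\bb{u}\|=\|\bb{v}\|=1$ and $\bb{v}^T \bb{G}\bb{X}^T \bb{u}=\sigma>0$; the stationarity conditions force $\bb{v}\in \mathcal{N}(\bbW_2^T)$ and $\bb{u}\in \mathcal{N}(\bbW_1)$. A direct second-order Taylor expansion at the critical point in direction $(\bb{V}_1,\bb{V}_2)$ reads
\[
H(\bb{V}_1,\bb{V}_2) \;=\; 2\,\langle \bb{G}\bb{X}^T,\,\bb{V}_2\bb{V}_1\rangle \;+\; \langle \bb{M}_1\bb{X},\,\nabla^2\ell(\bar{\bb{Z}}\bb{X})\,\bb{M}_1\bb{X}\rangle,
\]
where $\bb{M}_1 = \bb{V}_2\bbW_1 + \bbW_2\bb{V}_1$; convexity of $\ell$ makes the second (curvature) term non-negative. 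The goal becomes finding $(\bb{V}_1,\bb{V}_2)$ for which the bilinear term is strictly negative and dominates. With the rank-one ansatz $\bb{V}_1 = \bb{e}\bb{u}^T$, $\bb{V}_2 = \bb{v}\bb{f}^T$, the bilinear term becomes $2\sigma(\bb{f}^T\bb{e})$, and the identity $\bb{v}^T\bbW_2 = 0$ annihilates the cross term in $\|\bb{M}_1\|^2$, leaving $\|\bb{M}_1\|^2 = \|\bbW_1^T\bb{f}\|^2 + \|\bbW_2\bb{e}\|^2$. Everything reduces to a choice of $\bb{e},\bb{f}\in\mathbb{R}^{d_1}$.

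Degeneracy enters because $\rank(\bbW_2\bbW_1)<\min\{d_0,d_1,d_2\}$ forces at least one of $\mathcal{N}(\bbW_2)$ or $\mathcal{N}(\bbW_1^T)$ to be nontrivial. In the clean subcase, pick $\bb{e}\in\mathcal{N}(\bbW_2)$ and $\bb{f}\in\mathcal{N}(\bbW_1^T)$ with $\bb{f}^T\bb{e}\neq 0$: then $\bb{M}_1=0$ and $H=2\sigma(\bb{f}^T\bb{e})$, whose sign can be flipped to give $H<0$. The main obstacle is the subcase $\mathcal{N}(\bbW_2)\perp\mathcal{N}(\bbW_1^T)$, in which every such pair has $\bb{f}^T\bb{e}=0$. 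Here I would use a two-parameter perturbation $\bb{e}=\bb{e}_0+\epsilon\bb{e}_1$, $\bb{f}=\bb{f}_0+\epsilon'\bb{f}_1$ with $\bb{e}_0\in\mathcal{N}(\bbW_2)$, $\bb{f}_0\in\mathcal{N}(\bbW_1^T)$, and generic $\bb{e}_1,\bb{f}_1$: the bilinear term acquires the linear-order contribution $2\sigma(\epsilon\bb{f}_0^T\bb{e}_1+\epsilon'\bb{f}_1^T\bb{e}_0)$ (nonzero by a suitable choice because at least one of $\bb{e}_0,\bb{f}_0$ can be taken nonzero by degeneracy), while $\|\bbW_1^T\bb{f}\|=O(\epsilon')$ and $\|\bbW_2\bb{e}\|=O(\epsilon)$ make the curvature term only $O(\epsilon^2+\epsilon'^2)$. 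Choosing $(\epsilon,\epsilon')$ small with the correct signs drives $H<0$, so the critical point is a second-order saddle. The delicate part of the proof is a case-by-case verification that suitable $(\bb{e}_0,\bb{e}_1,\bb{f}_0,\bb{f}_1)$ exist in every dimensional configuration of the degeneracy, using only the degeneracy hypothesis and $\bb{G}\bb{X}^T\neq 0$.
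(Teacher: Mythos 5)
Your proposal is correct and is essentially the paper's argument: degeneracy yields a nonzero vector in $\mathcal{N}(\bbW_2)$ or in $\mathcal{N}(\bbW_1^T)$, and a rank-one perturbation built from that \emph{single} vector, placed in both factors with an asymmetric scaling parameter, keeps the convex curvature term constant (or higher order) in that parameter while the bilinear term $2\sigma(\bb{f}^T\bb{e})$ is linear in it and sign-adjustable, which forces a negative second-order directional derivative. The only real difference is organizational: the ``clean subcase'' and the worry about $\mathcal{N}(\bbW_2)\perp\mathcal{N}(\bbW_1^T)$ are unnecessary, since your own fallback with $\bb{e}=\bb{e}_0\in\mathcal{N}(\bbW_2)$ and $\bb{f}=\epsilon'\bb{e}_0$ (one null vector, one scaling parameter) already handles every degenerate configuration --- this is precisely the paper's choice, with coordinate vectors of a nonzero entry of $\bb{G}\bb{X}^T$ in place of your singular pair and with the large-$\alpha$ rather than small-$\epsilon'$ normalization of the same direction.
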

\begin{proof}
{\color{black}The proof of the corollary is relegated to Appendix~\ref{App:corollary_degenerate2layer}}
\end{proof}

\cite{baldi1989neural} and \cite{srebro2003weighted} show the same result when both $\bX$ and $\bY$ are full row rank. Theorem \ref{thm: Two-layer} generalizes their results by relaxing the assumptions on both $\bX$ and $\bY$.

\section{Multi-Layer Linear Neural Network}\label{sec:multi-layer-results}
Consider the training problem of multi-layer deep linear neural networks: 
\begin{equation}\label{Linear_SEloss-2}
\min_{\bW} \, \, \dfrac{1}{2} \|{\bW}_h \cdots {\bW}_1\bX \, - \, \bY \, \|^2.
\end{equation}
Here  ${\bW}=\big( {\bW}_i \big)_{i=1}^h$, ${\bW}_i \in \mathbb{R}^{d_i \times d_{i-1}}$ are the weight matrices, $\bX \in \mathbb{R}^{d_0 \times n}$ is the input training data, and $\bY \in \mathbb{R}^{d_h \times n}$ is the target training data. Based on our general framework, the corresponding auxiliary optimization problem is given by 
\begin{equation} \label{Transformed_Linear_SEloss-2}
\begin{array}{ll}
\min_{\bZ \in \mathbb{R}^{d_h \times n}}  \dfrac{1}{2} ||\bZ\bX \, - \, \bY||^2 \quad
\st \;\; \rank(\bZ) \leq d_p \triangleq \min_{0 \leq i\leq h} \,d_i.
\end{array}
\end{equation}

\cite{lu2017depth} showed that when $\bX$ and $\bY$ are full row rank, every local minimum of \eqref{Linear_SEloss-2} is global. {\color{black} We show that the full rankness assumption on $\bY$ cannot be simply relaxed by providing the following counterexample:}
\[\bX=\bI \quad \bbW_3 = \left[\begin{array}{c} 1 \\ 0 \end{array} \right], \quad \bbW_2=\left[\, 0 \,  \right], 
\quad \bbW_1=\left[\begin{array}{c c} 1 & 0\end{array}\right], \quad \bY=\left[\begin{array}{c c} 0 & 0\\ 0 & 1\\ \end{array}\right].\]
In this example, the point $\bbW=(\bbW_1,\bbW_2,\bbW_3)$ is a local optimum of a $3$-layer deep linear model that is not global. Despite this counterexample, we will provide a set of conditions on the network architecture for which all local minima of \eqref{Linear_SEloss-2} are global even if $\bY$ is not full rank. Before proceeding to the proof we define the mapping
\[\mathcal{M}_{i,j}({\bW}_i, \ldots , {\bW}_j) : \{{\bW}_i, \ldots , {\bW}_j\} \rightarrow \mathcal{R}_{\mathcal{M}_{i,j}} \quad \mbox{for } i>j,\]
where $\mathcal{R}_{\mathcal{M}_{i,j}} \triangleq \{\bZ={\bW}_i \cdots {\bW}_j \in \mathbb{R}^{d_i \times d_{j-1}} \, | \, \rank(\bZ) \leq \min_{j-1 \leq l \leq i}d_l \, \}$. {\color{black} We start by re-stating Theorem $3.1$ of \cite{lu2017depth} using our notation. 
\begin{lemma}\label{lm: Kenji's Result}
	If $\bW$ is non-degenerate, then $\mathcal{M}_{h,1}({\bW})={\bW}_h \cdots {\bW}_1$ is locally open at ${\bW}$.
\end{lemma}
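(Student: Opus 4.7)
I proceed by induction on the number of layers $h$, using Theorem~\ref{thm:Loc_Open} and Proposition~\ref{Prev_result} to establish local openness of the binary matrix product, together with the composition property of locally open maps. The base case $h=1$ is trivial since $\mathcal{M}_{1,1}$ is the identity on $\mathbb{R}^{d_1\times d_0}$ (which coincides with its range).

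For the inductive step with $h\geq 2$, the idea is to split the product at a ``waist'' index $k^*\in\{1,\ldots,h-1\}$, writing $\mathcal{M}_{h,1} = \mathcal{M} \circ (\mathcal{M}_{h,k^*+1},\,\mathcal{M}_{k^*,1})$ where $\mathcal{M}(\bb{A},\bb{B})=\bb{A}\bb{B}$ is the binary product on $\bb{A}=\bb{W}_h\cdots\bb{W}_{k^*+1}\in\mathbb{R}^{d_h\times d_{k^*}}$ and $\bb{B}=\bb{W}_{k^*}\cdots\bb{W}_1\in\mathbb{R}^{d_{k^*}\times d_0}$, and then to apply the three local-openness ingredients. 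Let $d_p=\min_i d_i$. When $d_p$ is attained at some interior index, pick any such $k^*$: the rank-pinching $d_p\leq\rank(\bb{A})\leq\min(d_h,d_{k^*})=d_{k^*}=d_p$ (and similarly for $\bb{B}$, and for every sub-product inside each sub-chain) forces $\rank(\bb{A})=\rank(\bb{B})=d_p$, making $\bb{A}$ full column rank and $\bb{B}$ full row rank. Conditions (i) and (ii) of Theorem~\ref{thm:Loc_Open} then hold trivially with $\widetilde{\bb{W}}_i=\bb{0}$ (or Proposition~\ref{Prev_result} applies in the borderline $d_p=\min(d_h,d_0)$ case), so $\mathcal{M}$ is locally open at $(\bb{A},\bb{B})$ in its range. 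The same pinching within each sub-chain shows that both $(\bb{W}_h,\ldots,\bb{W}_{k^*+1})$ and $(\bb{W}_{k^*},\ldots,\bb{W}_1)$ are non-degenerate, so the inductive hypothesis yields local openness of $\mathcal{M}_{h,k^*+1}$ and $\mathcal{M}_{k^*,1}$; composing delivers the result.

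When $d_p$ is attained only at the boundary (say $d_0=d_p$ uniquely, the $d_h=d_p$ case being symmetric), I instead set $k^*=h-1$. Then $\rank(\bb{B})=\rank(\bb{W}_{h-1}\cdots\bb{W}_1)$ is pinched to $d_0=\min(d_0,\ldots,d_{h-1})$, so $(\bb{W}_{h-1},\ldots,\bb{W}_1)$ is non-degenerate and the inductive hypothesis applies; the other sub-chain consists of the single matrix $\bb{W}_h$, for which $\mathcal{M}_{h,h}$ is the identity and trivially open. For the middle product, $\bb{B}$ is full column rank and the waist dimension $d_{h-1}\geq d_0=\min(d_h,d_0)$ puts us in the full-rank case of Proposition~\ref{Prev_result}, whose second condition is satisfied with a zero correction.

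The main obstacle is precisely that a naive induction stripping off the outermost layer can fail: when $d_p$ is attained uniquely at the boundary, the chain with that endpoint removed may itself be degenerate, and the inductive hypothesis will not apply to it. The case analysis above side-steps this by splitting at the opposite endpoint, which isolates the bulk into a guaranteed non-degenerate sub-chain while leaving only a single, unconstrained matrix on the short side; beyond this subtlety the argument is just induction plus composition of locally open maps.
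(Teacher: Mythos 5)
Your proof is correct, and although it shares the paper's overall strategy (induction on $h$, local openness of the binary product via Proposition~\ref{Prev_result} and Theorem~\ref{thm:Loc_Open}, composition of locally open maps), the decomposition is genuinely different. The paper's inductive step keeps the long product as the outer map: it picks an index $p$ attaining $d_p=\min_i d_i$, notes that $\rank(\bb{W}_{p+1}\bb{W}_p)$ is pinched to $d_p$, collapses that pair into a single matrix $\bb{Z}_p$, and applies the induction hypothesis to the shortened chain. You instead make the binary product at the waist the \emph{outermost} map, splitting into $\bb{A}=\bb{W}_h\cdots\bb{W}_{k^*+1}$ and $\bb{B}=\bb{W}_{k^*}\cdots\bb{W}_1$ and applying the induction hypothesis to each sub-chain. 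This buys a cleaner range bookkeeping: since $d_{k^*}=d_p$, each sub-chain is automatically non-degenerate and its range $\mathcal{R}_{\mathcal{M}_{h,k^*+1}}$ (resp.\ $\mathcal{R}_{\mathcal{M}_{k^*,1}}$) is the entire matrix space, so the rank constraint lives only in the final binary product and the ranges match exactly in the composition. By contrast, the paper's collapsed chain, read literally as an unconstrained product, can be degenerate (its product has rank $d_p$ while its minimal dimension is $\min_{i\neq p}d_i$, which exceeds $d_p$ when the minimum is attained uniquely), so the induction hypothesis there must be interpreted with $\bb{Z}_p$ restricted to $\mathcal{R}_{\mathcal{M}_{p+1,p}}$ --- a point the paper leaves implicit and that your split avoids entirely; you also handle explicitly the case where the minimum is attained only at $d_0$ or $d_h$. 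One small point to tighten: in that boundary case the word ``uniquely'' is unnecessary (and slightly off when both $d_0$ and $d_h$ attain $d_p$); your argument only needs $d_0=d_p$ and works verbatim whether or not $d_h=d_p$ as well.
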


\begin{proof}
	We construct a proof by induction on $h$ to show the desired result. When $h=2$, we either have $d_1 < \min \{d_2,d_0\}$ or $d_1 \geq \min \{d_2, d_0\}$. In the first case,  
	\[d_1=\rank({\bW}_2{\bW}_{1}) \leq  \rank({\bW}_{1}) \leq d_1 \Rightarrow\rank({\bW}_{1}) = d_1 , \] 
	and 
	\[d_1=\rank({\bW}_2{\bW}_{1}) \leq  \rank({\bW}_{2}) \leq d_1 \Rightarrow\rank({\bW}_{2}) = d_1.\]
	Since $\bW_1$ is full row rank and $\bW_2$ is full column rank, then by Theorem \ref{thm:Loc_Open}, {\color{black}choosing $\widetilde{\bW}_1$ = $\widetilde{\bW}_2=\bzero$} yields $\mathcal{M}_{2,1}(\cdot)$ is locally open at $({\bW}_2,{\bW}_1)$.
	In the second case, either
	\[d_2=\rank({\bW}_2{\bW}_{1}) \leq  \rank({\bW}_{2}) \leq d_2 \Rightarrow \rank({\bW}_{2})  = d_2,\]
	or 
	\[d_0=\rank(\bW_2\bW_1) \leq  \rank(\bW_1) \leq d_0 \Rightarrow \rank(\bW_1)  = d_0.\]
	Thus, either $\bW_2$ is full row rank or $\bW_1$ is full column rank, then by Proposition \ref{Prev_result}, $\mathcal{M}_{2,1}(\cdot)$ is locally open at $({\bW}_2,{\bW}_1)$. Now assume the result holds for the product of $h$ matrices $\mathcal{M}_{h,1}({\bW})$, we show it is true for $\mathcal{M}_{h+1,1}({\bW})$. Since 
	\[d_p=\rank({\bW}_h \ldots  {\bW}_{1}) \leq  \rank({\bW}_{p+1}{\bW}_p) \leq d_p \Rightarrow \rank({\bW}_{p+1}{\bW}_p) = d_p,\]
	then using Proposition \ref{Prev_result}, we get $\mathcal{M}_{p+1,p}(\cdot)$ is locally open at $({\bW}_{p+1},{\bW}_p)$. So we can replace ${\bW}_{p+1}{\bW}_{p}$ by a new matrix ${\bZ}_p$ with rank $d_p$. Then by induction hypothesis, the product mapping ${\cal M}_{h+1,1}({\bW}) = {\bW}_{h+1}\cdots {\bW}_{p+2}{\bZ}_p{\bW}_{p-1}\cdots {\bW}_1$ is locally open at ${\bW}$. Since the composition of locally open maps is locally open, the result follows.
	\end{proof}

We next show that under a set of necessary conditions, every local minimum of problem (\ref{Linear_SEloss-2}) is global. 

\begin{lemma}\label{lm:Multiplie-layer-Non-Deg}
	Every non-degenerate local minimum of (\ref{Linear_SEloss-2}) is global minimum.
\end{lemma}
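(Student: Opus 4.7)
The plan is to follow the standard three-step template set up in Section~2 of the paper: transfer the local minimum of the parameterized problem to the auxiliary rank-constrained problem via local openness, then show that the auxiliary problem has no spurious local minima, and finally conclude that the original local minimum is global because the two problems share the same optimal value.

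First, let $\bar{\bb{W}}=(\bar{\bb{W}}_h,\ldots,\bar{\bb{W}}_1)$ be a non-degenerate local minimum of \eqref{Linear_SEloss-2}. By Lemma~\ref{lm: Kenji's Result}, the product map $\mathcal{M}_{h,1}$ is locally open at $\bar{\bb{W}}$ into its range $\mathcal{R}_{\mathcal{M}_{h,1}}=\{\bb{Z}:\rank(\bb{Z})\le d_p\}$. Since the linear post-composition $\bb{Z}\mapsto \bb{Z}\bb{X}$ sends this rank-constrained range into the feasible set of \eqref{Transformed_Linear_SEloss-2}, one can apply Observation~\ref{lm:LocalMinMapping} to conclude that $\bar{\bb{Z}}=\bar{\bb{W}}_h\cdots\bar{\bb{W}}_1$ is a local minimum of the auxiliary problem \eqref{Transformed_Linear_SEloss-2}. (Equivalently, one may directly consider $\mathcal{F}(\bb{W})=\mathcal{M}_{h,1}(\bb{W})$ as the relevant mapping and regard $\frac{1}{2}\|\bb{Z}\bb{X}-\bb{Y}\|^2$ as the loss; local openness of $\mathcal{M}_{h,1}$ is what drives the argument.)

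Next, show that every local minimum of \eqref{Transformed_Linear_SEloss-2} is a global minimum. This step is a verbatim copy of the SVD reduction used in the proof of Lemma~\ref{lm:Relax-X-Assumption}: let $\bb{X}=\bb{U}_{\bb{X}}\bb{\Sigma}_{\bb{X}}\bb{V}_{\bb{X}}^T$, split off the constant term $\|(\bb{Y}\bb{V}_{\bb{X}})_{:,r_{\bb{X}}+1:n}\|^2$, and reduce the problem to
\[
\min_{\bar{\bb{Z}}\in\mathbb{R}^{d_h\times r_{\bb{X}}}} \tfrac{1}{2}\|\bar{\bb{Z}}-\bar{\bb{Y}}\|^2 \quad \text{s.t.}\quad \rank(\bar{\bb{Z}})\le \min\{d_p,r_{\bb{X}}\},
\]
where $\bar{\bb{Y}}=(\bb{Y}\bb{V}_{\bb{X}})_{:,1:r_{\bb{X}}}$. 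The Eckart--Young theorem (truncated SVD) then gives that every local minimum of this low-rank approximation problem is in fact a global minimum, which pulls back to the same statement for \eqref{Transformed_Linear_SEloss-2}. No new assumptions on $\bb{X}$ or $\bb{Y}$ are needed; the rank bound $d_p$ simply replaces the bound $\min\{d_0,d_1,d_2\}$ appearing in the two-layer case.

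Combining the two steps, $\bar{\bb{Z}}$ achieves the global optimum of \eqref{Transformed_Linear_SEloss-2}. Since the range of $\mathcal{M}_{h,1}$ equals the feasible set of \eqref{Transformed_Linear_SEloss-2}, the optimal values of \eqref{Linear_SEloss-2} and \eqref{Transformed_Linear_SEloss-2} coincide, so $\bar{\bb{W}}$ must be a global minimum of \eqref{Linear_SEloss-2}. The main conceptual work has already been done: the novelty is packaged in Lemma~\ref{lm: Kenji's Result}, and the obstacle that existed in previous arguments (needing full-rankness of $\bb{X}$ or $\bb{Y}$ to deduce global optimality from a first-order/second-order analysis in the weight space) disappears because the local-openness viewpoint lets us work in the lifted variable $\bb{Z}$, where rank-constrained Frobenius approximation is unconditionally benign. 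The only point one must verify carefully is that local openness of $\mathcal{M}_{h,1}$ at $\bar{\bb{W}}$ is enough for Observation~\ref{lm:LocalMinMapping} to apply even though $\bb{X}$ appears as a fixed right-multiplier in the loss, and this follows because the composition $\bb{W}\mapsto \ell(\mathcal{M}_{h,1}(\bb{W})\bb{X})$ matches the template $\ell\circ\mathcal{F}$ once we take the loss on $\bb{Z}$ to be $\bb{Z}\mapsto \frac{1}{2}\|\bb{Z}\bb{X}-\bb{Y}\|^2$.
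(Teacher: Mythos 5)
Your proposal is correct and follows essentially the same route as the paper: Lemma~\ref{lm: Kenji's Result} gives local openness of $\mathcal{M}_{h,1}$ at the non-degenerate point, Observation~\ref{lm:LocalMinMapping} transfers the local minimum to the auxiliary problem \eqref{Transformed_Linear_SEloss-2}, and the paper then simply invokes Lemma~\ref{lm:Relax-X-Assumption} (whose proof is exactly the SVD reduction you reproduce, and which applies verbatim with the rank bound $d_p$ in place of $\min\{d_2,d_1,d_0\}$). The one imprecision is attributing the ``every local minimum of rank-constrained Frobenius approximation is global'' step to Eckart--Young, which only identifies the global minimizer; the absence of spurious local minima is the content of \cite[Theorem~2.2]{lu2017depth}, which is what Lemma~\ref{lm:Relax-X-Assumption} actually cites.
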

{\color{black}\begin{proof}
	 Suppose $\bW=({\bW}_h, \ldots ,{\bW}_1)$ is a non-degenerate local minimum. Then it follows by Lemma \ref{lm: Kenji's Result} that $\mathcal{M}_{h,1}$ is locally open at ${\bW}$. Then by Observation \ref{lm:LocalMinMapping}, $\bZ = \mathcal{M}_{h}({\bW}_h, \ldots ,{\bW}_1)$ is a local optimum of problem (\ref{Transformed_Linear_SEloss-2}) which is in fact global by Lemma \ref{lm:Relax-X-Assumption}.
\end{proof}



As previously mentioned, due to a simple counterexample, we cannot in general relax the full rankness assumption on $\bY$. We now determine {\color{black} problem} structures for which every degenerate local minimum is global, i.e., (due to Lemma 4) problem structures for which every local minimum is global.

\begin{theorem}\label{thm:Deep-linear-Result}
If there does not exist $p_1$ and $p_2$, $1 \leq p_1 < p_2 \leq h-1$ with $d_h > d_{p_2}$ and $d_0 > d_{p_1}$, then every local minimum of problem~\eqref{Linear_SEloss-2} is a global minimum.
\end{theorem}

\begin{proof}
	The proof of the theorem is relegated to Appendix~\ref{Deep-linear-result-app}. 	
\end{proof}

\begin{remark}Following the same steps of the proof of Theorem \ref{thm:Deep-linear-Result}, we get the same result when replacing the square error loss by a general convex and differentiable function $\ell(\cdot)$. Moreover, if the range of the mapping $\mathcal{M}_{h}$ is the entire space, i.e., $\min_{0\leq i \leq h} \, d_i = \min\{d_h,d_0\}$, the auxiliary problem (\ref{Transformed_Linear_SEloss-2}) is unconstrained and convex. Then, as we show in Corollary \ref{Corollary: Deep-Network-whole-space}, every non-degenerate critical point is global, and every degenerate critical point is either a saddle point or a global minimum; which generalizes \cite[Theorem~2.1]{yun2017global}.
\end{remark}

\begin{remark}
{\color{black} Practically, Theorem \ref{thm:Deep-linear-Result} provides a simple test that uses the network structure to determine whether every local minimum of the underlying landscape is global. }

\end{remark}

\begin{corollary} \label{Corollary: Deep-Network-whole-space}
	Consider problem (\ref{Linear_SEloss-2}) with general convex and differentiable loss function $\ell(\cdot)$. When $\min_i d_i = \min(d_h,d_0)$, every non-degenerate critical point is global, and every degenerate critical point is either a saddle point or a global minimum.
\end{corollary}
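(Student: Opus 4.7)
The plan is to first observe that, under the hypothesis $\min_i d_i=\min(d_h,d_0)$, the rank constraint $\rank(\bb{Z})\le d_p$ in \eqref{Transformed_Linear_SEloss-2} becomes vacuous, so the auxiliary problem collapses to the unconstrained convex program $\min_{\bb{Z}\in\mathbb{R}^{d_h\times d_0}}\ell(\bb{Z}\bb{X})$. Its only first-order optimality condition is $\bb{G}\bb{X}^T=\mathbf{0}$, where $\bb{G}\triangleq\nabla\ell(\bb{Z}\bb{X})$. Consequently, the corollary reduces to two claims about $f(\bb{W})\triangleq\ell(\bb{W}_h\cdots\bb{W}_1\bb{X})$: (a) every non-degenerate critical point of $f$ satisfies $\bb{G}\bb{X}^T=\mathbf{0}$, and (b) every degenerate critical point is either globally optimal or a second order saddle.

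For (a), I would assume without loss of generality that $d_p=d_h\le d_0$ (the opposite case is symmetric, using $\nabla_{\bb{W}_h}$ in place of $\nabla_{\bb{W}_1}$). From $\rank(\bb{W}_h\cdots\bb{W}_1)=d_h$ and the factorization $\bb{W}_h\cdots\bb{W}_1=(\bb{W}_h\cdots\bb{W}_2)\bb{W}_1$, the left factor $\bb{W}_h\cdots\bb{W}_2\in\mathbb{R}^{d_h\times d_1}$ must have rank $d_h$, i.e.\ full row rank, so its transpose is injective. Criticality yields $\nabla_{\bb{W}_1}f=(\bb{W}_h\cdots\bb{W}_2)^T\bb{G}\bb{X}^T=\mathbf{0}$, and injectivity then forces $\bb{G}\bb{X}^T=\mathbf{0}$, placing $\bb{W}$ at the global minimum.

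For (b), I would assume $\bb{W}$ is degenerate and $\bb{G}\bb{X}^T\ne\mathbf{0}$ (the remaining case is covered by (a)). A direct second-order Taylor expansion of $f$ at $\bb{W}$ along a direction $\bb{U}=(\bb{U}_h,\ldots,\bb{U}_1)$, combined with criticality, gives
\[
  \nabla^2 f[\bb{U},\bb{U}]=\bigl\langle\nabla^2\ell(\bb{Z}\bb{X})[\bb{M}_1\bb{X}],\,\bb{M}_1\bb{X}\bigr\rangle+2\!\sum_{p<q}\bigl\langle\bb{G}\bb{X}^T,\,\bb{C}_{pq}(\bb{U}_p,\bb{U}_q)\bigr\rangle,
\]
where $\bb{M}_1=\sum_i\bb{W}_h\cdots\bb{W}_{i+1}\bb{U}_i\bb{W}_{i-1}\cdots\bb{W}_1$ is the first-order perturbation of the product and $\bb{C}_{pq}(\bb{U}_p,\bb{U}_q)=\bb{W}_h\cdots\bb{W}_{q+1}\bb{U}_q\bb{W}_{q-1}\cdots\bb{W}_{p+1}\bb{U}_p\bb{W}_{p-1}\cdots\bb{W}_1$ is the mixed cross-term. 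The first summand is $\ge 0$ by convexity of $\ell$, so my plan is to pick $\bb{U}$ supported on only two layers $p<q$ with rank-one $\bb{U}_p$ and $\bb{U}_q$ chosen so that $\bb{M}_1\bb{X}=\mathbf{0}$ (killing the convex part) while the cross-term is non-zero; a sign flip then turns this into a strictly negative curvature direction, certifying $\bb{W}$ as a second order saddle. The rank-one factors will be drawn from null spaces made available by the rank deficiency of $\bb{W}_h\cdots\bb{W}_1$, exactly in the spirit of the constructions in Theorem~\ref{thm:Deep-linear-Result} and Corollary~\ref{cor: degenerate2layer}.

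The hard part will be verifying non-vanishing of the cross term in (b): the critical-point identities $(\bb{W}_h\cdots\bb{W}_{i+1})^T\bb{G}\bb{X}^T(\bb{W}_{i-1}\cdots\bb{W}_1)^T=\mathbf{0}$ trap the column and row spaces of $\bb{G}\bb{X}^T$ inside specific kernels, and one must confirm that rank-one perturbations drawn from precisely those kernels (supplied by the degeneracy of $\bb{W}_h\cdots\bb{W}_1$) still yield $\langle\bb{G}\bb{X}^T,\bb{C}_{pq}(\bb{U}_p,\bb{U}_q)\rangle\ne 0$ whenever $\bb{G}\bb{X}^T\ne\mathbf{0}$. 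Once this is settled, scaling $t\bb{U}$ gives arbitrarily negative Hessian values along the direction and completes the proof.
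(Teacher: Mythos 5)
Your part (a) is exactly the paper's argument for non-degenerate critical points: non-degeneracy forces the partial product $\bb{W}_h\cdots\bb{W}_2$ (or $\bb{W}_{h-1}\cdots\bb{W}_1$) to be full rank, the gradient identity $(\bb{W}_h\cdots\bb{W}_2)^T\nabla\ell(\cdot)\bb{X}^T=\mathbf{0}$ then kills $\nabla\ell(\cdot)\bb{X}^T$, and convexity of the (now unconstrained) auxiliary problem finishes. No issues there.

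Part (b) has a genuine gap. You propose to exhibit a direction with strictly negative Hessian by perturbing only two layers $p<q$ with rank-one $\bb{U}_p,\bb{U}_q$ so that the first-order term $\bb{M}_1\bb{X}$ vanishes while the cross term $\langle\bb{G}\bb{X}^T,\bb{C}_{pq}\rangle$ does not. For $h\ge 3$ this cannot work in general, and the "hard part" you flag is genuinely impossible: take $h=3$, all $d_i=d$, $\bb{X}=\bb{I}$, $\bb{W}_1=\bb{W}_2=\bb{W}_3=\mathbf{0}$, $\bb{Y}\ne\mathbf{0}$. This is a degenerate critical point with $\bb{G}\bb{X}^T=-\bb{Y}\ne\mathbf{0}$, yet every second-order cross term $\bb{C}_{pq}(\bb{U}_p,\bb{U}_q)$ contains an unperturbed factor $\bb{W}_r=\mathbf{0}$ and hence vanishes; the Hessian is identically zero. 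The point is a saddle only because the \emph{third}-order term $\langle\bb{A}_3\bb{A}_2\bb{A}_1,\bb{\Delta}\rangle$ can be made negative. This is also why the corollary claims only ``saddle point'' rather than ``second order saddle'' (in contrast to the two-layer Corollary~\ref{cor: degenerate2layer}, where your strategy does apply).

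The paper handles the degenerate case by reducing to Lemma~\ref{lm:degenerate-local-global} (via the reduction at the end of the proof of Theorem~\ref{thm:Deep-linear-Result}, which collapses adjacent full-rank factors so that all intermediate layers have non-trivial left and right null spaces). That lemma perturbs up to $h$ layers simultaneously with rank-one matrices $\bb{b}_k\bb{p}_k^T$ drawn from null spaces of partial products, arranges all derivatives of $g(t)$ up to order $h-k^*$ to vanish, and makes the first non-vanishing derivative (of order $h-k^*+1$ or $h-k^*+2$) negative by choosing the signs of $\alpha_h,\alpha_1$. To repair your proof you would need to replace the second-order argument by such a higher-order, multi-layer perturbation, and additionally justify why the required null-space vectors exist under the corollary's hypothesis.
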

\begin{proof}
	Suppose $\bbW$ is a degenerate critical point, then by replacing the square loss error by a general convex and differentiable function $\ell(\cdot)$ in Theorem \ref{thm:Deep-linear-Result}, we get that $\bbW$ is either a saddle or a global minimum. Suppose $\bbW=(\bbW_h, \ldots ,\bbW_1)$ is a non-degenerate critical point and $k \triangleq  \min_i \, d_i = \min (d_h,d_0)$, we follow the same steps of the proof of \cite[Theorem~2.1]{yun2017global} to show the desired result. First note that
	\[\dfrac{\partial \ell(\bW_h\cdots \bW_1\bX)}{\partial \bW_1}\Bigg|_{\bW=\bbW}=\bbW_2^{\top}\cdots \bbW_h^{\top}\nabla \ell(\bbW_h\cdots \bbW_1\bX){\bX}^{\top},\]
	and
	$\dfrac{\partial \ell(\bW_h\cdots \bW_1\bX)}{\partial \bW_h}\Bigg|_{\bW=\bbW}=\nabla \ell(\bbW_h\cdots \bbW_1\bX){\bX}^{\top}\bbW_1^{\top}\cdots \bbW_{h-1}^{\top},$
	where $\nabla \ell$ is the gradient mapping of the function $\ell(\cdot)$. If $k=d_h$, let $\bS=\bbW_2^{\top}\cdots \bbW_h^{\top} \in \mathbb{R}^{d_1 \times k}$ and $\bT=\nabla \ell(\bbW_h\cdots \bbW_1\bX){\bX}^{\top}$. It follows that
	\[k = \rank(\bbW_h \cdots \bbW_1) \leq \rank(\bS^{\top}) \leq k \Rightarrow \rank(\bS)=k.\]
	Since $\bbW$ is a critical point and ${\bS}^{\top}$ is full row rank, we get
	\[
	0=\Bigg\| \dfrac{\partial \ell(\bW_h\cdots \bW_1\bX)}{\partial {\bW}_1}\Bigg|_{\bW=\bbW} \Bigg\|^2=tr\big(\bT^{\top}\bS^{\top}\bS\bT\big) \geq {\sigma}_{min}^2(\bS)\|\bT\|^2.
	\]
	Thus, $ \bT = \nabla \ell(\bbW_h \cdots \bbW_1\bX){\bX}^{\top}=\bzero,$
	which by convexity $\ell(\cdot)$ implies that $\bbW$ is a global minimum. Similarly, we can show that the case of $k=d_0$ results in the global optimality of $\bbW$ as well.
	\end{proof}
	
\section{Conclusion}\label{Sec:Con}
{\color{black}In this paper, we develop a unifying landscape analysis framework for studying the local/global equivalence for several non-convex objective functions that arise in statistical machine learning settings. In particular, our proposed framework utilizes
the concept of local openness from differential geometry to provide sufficient conditions under which local optima of the objective function are global. While we narrow down our focus to a certain class of non-convex problems, the studied class is general enough to cover many practical applications such as matrix completion,
low-rank matrix recovery, and deep learning. In our work, we completely characterize the local openness of matrix multiplication mapping in its range. More specifically, we provide necessary and sufficient conditions under which the matrix multiplication mapping is locally open. Based on this theoretical result, we develop a complete characterization
of the local/global optima equivalence of multi-layer linear neural networks and provide sufficient conditions for which no spurious local optima exist under hierarchical non-linear deep neural networks. Unlike many existing results that focus on a particular algorithm (example gradient descent) and specific input data distribution, our result depends on the {\color{black} network structure} and does not rely on the probability distribution of the input data.\\

Leveraging on our
results, \cite{zhu2019distributed} show that every second-order stationary point of the low-rank matrix factorization problem is global. Our framework was also used to show similar results for shallow linear neural networks~\cite{zhu2019global}, low-rank matrix recovery~\cite{li2020global1}, and meta learning objectives on several reinforcement learning tasks~\cite{molybog2020global}. Such favorable geometry directly implies that local search methods that compute second-order stationary solutions will converge to global minima of the objective functions.}

\section {Acknowledgement}
The authors would like to thank Li Zhang for pointing out a mistake in the proof of Theorem~12. 

\bibliographystyle{plain}
\bibliography{references}

\newpage

\begin{appendices}

\section{Proof of Theorem \ref{thm:Loc_Open}}\label{local-open-app}
Before proceeding to the proof of Theorem~\ref{thm:Loc_Open}, we need to state and prove few lemmas. 
\begin{lemma}\label{boundA}
	Let $\bV \in \mathbb{R}^{m \times n}$ be a matrix with $\rank(\bV)=r < m$. Then there exist an index set ${\cal B}=\{1, \ldots , r\} \subseteq \{1,\ldots,m\}$ and a matrix $\bA \in \mathbb{R}^{(m-r) \times r}$  such that 
	\[
	\|\bA\|_{\infty} = \max_{i,j} |\bA_{ij}|\leq 2^{m-r-1}\quad \textrm{and} \quad {\bV}_{{\cal B}^c} = \bA {\bV}_{\cal B},
	\]
	where ${\bV}_{\cal B} \in \mathbb{R}^{r \times n}$ is a matrix with rows $\{{\bV}_{i,:}\}_{i \in {\cal B}}$ and ${\bV}_{{\cal B}^c} \in \mathbb{R}^{(m-r) \times n}$ is a matrix with rows $\{{\bV}_{i,:}\}_{i \in {\cal B}^c}$.
\end{lemma}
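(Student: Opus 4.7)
The plan is to use a maximum-volume row selection together with Cramer's rule; this will actually give the stronger bound $\|\bb{A}\|_{\infty} \leq 1$, so the stated bound $2^{m-r-1}$ follows for free whenever $m > r$. First I would pick a set ${\cal C} \subseteq \{1,\dots,n\}$ of $r$ column indices such that the $m\times r$ submatrix $\bb{V}_{:,{\cal C}}$ has rank $r$; such ${\cal C}$ exists because $\rank(\bb{V}) = r$. Then, among all $\binom{m}{r}$ possible choices of row indices, I would select ${\cal B} = \{i_1,\dots,i_r\}$ so as to maximize $|\det(\bb{V}_{{\cal B},{\cal C}})|$. Since $\bb{V}_{:,{\cal C}}$ has rank $r$, this maximum is strictly positive, so $\bb{V}_{{\cal B},{\cal C}}$ is invertible.

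Next I would show that this ${\cal B}$ already does the job. Invertibility of $\bb{V}_{{\cal B},{\cal C}}$ forces $\rank(\bb{V}_{{\cal B}}) = r$, which combined with $\rank(\bb{V}) = r$ shows that the rows of $\bb{V}_{{\cal B}}$ span the entire row space of $\bb{V}$. Hence there is a unique $\bb{A} \in \mathbb{R}^{(m-r)\times r}$ with $\bb{V}_{{\cal B}^c} = \bb{A}\bb{V}_{{\cal B}}$, and restricting both sides to the columns in ${\cal C}$ yields the explicit formula $\bb{A} = \bb{V}_{{\cal B}^c,{\cal C}}(\bb{V}_{{\cal B},{\cal C}})^{-1}$.

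Finally, to bound the entries of $\bb{A}$, I would fix $i \in {\cal B}^c$ and $k \in \{1,\dots,r\}$ and view the row $\bb{A}_{i,:}$ as the unique solution of $\bb{A}_{i,:}\,\bb{V}_{{\cal B},{\cal C}} = \bb{V}_{i,{\cal C}}$. Cramer's rule then gives
\[
\bb{A}_{i,k} \;=\; \frac{\det(\bb{M}_{i,k})}{\det(\bb{V}_{{\cal B},{\cal C}})},
\]
where $\bb{M}_{i,k}$ is $\bb{V}_{{\cal B},{\cal C}}$ with its $k$-th row replaced by $\bb{V}_{i,{\cal C}}$. The key observation is that $\bb{M}_{i,k}$ is itself an $r\times r$ submatrix of $\bb{V}_{:,{\cal C}}$, indexed by the rows $({\cal B}\setminus\{i_k\})\cup\{i\}$. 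By the maximality from the first paragraph, $|\det(\bb{M}_{i,k})| \leq |\det(\bb{V}_{{\cal B},{\cal C}})|$, and hence $|\bb{A}_{i,k}| \leq 1 \leq 2^{m-r-1}$. The only subtle point will be the recognition that a single row-swap keeps us inside the family of $r\times r$ submatrices of $\bb{V}_{:,{\cal C}}$; once that is identified the bound is immediate, and verifying that the formula derived on columns ${\cal C}$ extends to all columns is a straightforward rank/dimension argument.
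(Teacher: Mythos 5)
Your proof is correct, and it takes a genuinely different route from the paper's. The paper argues by induction on $m$: starting from an arbitrary basis of rows, it greedily swaps a row $i^*$ with maximal coefficient out of the basis whenever some coefficient exceeds the current threshold, and tracks how the remaining coefficients can grow under each swap — this doubling at each step is exactly what produces the factor $2^{m-r-1}$. You instead choose the row set $\mathcal{B}$ globally, as a maximizer of $|\det(\bb{V}_{\mathcal{B},\mathcal{C}})|$ over all $r$-subsets of rows (after first fixing $r$ independent columns $\mathcal{C}$), and then Cramer's rule expresses each coefficient $\bb{A}_{i,k}$ as a ratio of two $r\times r$ minors of $\bb{V}_{:,\mathcal{C}}$, the numerator being the minor on rows $(\mathcal{B}\setminus\{i_k\})\cup\{i\}$; maximality makes this ratio at most $1$ in absolute value. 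All the steps check out: the existence and uniqueness of $\bb{A}$ follow from $\bb{V}_{\mathcal{B}}$ having full row rank $r$ equal to $\rank(\bb{V})$, the identity on the columns $\mathcal{C}$ determines $\bb{A}$ and extends to all columns automatically, and since $i\in\mathcal{B}^c$ the replaced-row matrix $\bb{M}_{i,k}$ really is (up to a row permutation, which does not affect $|\det|$) one of the submatrices competing in the maximization. Your argument is the classical maximum-volume submatrix bound, and it yields the strictly stronger conclusion $\|\bb{A}\|_{\infty}\le 1$, which implies the paper's bound since $m>r$ gives $2^{m-r-1}\ge 1$; the trade-off is that your $\mathcal{B}$ is obtained by a non-constructive optimization over $\binom{m}{r}$ subsets, whereas the paper's inductive swap procedure is closer to an explicit algorithm, at the cost of the exponentially weaker constant. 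Since the lemma is only invoked downstream through the bound on $\|\bb{A}\|$, the stronger constant would only improve the perturbation estimates.
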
	
Notice that in the above lemma, the bound on the norm of  matrix $\bA$ is independent of the dimension $n$ and  the choice of matrix $\bV$.
\begin{proof}
	To ease the notation, we denote the $i^{th}$ row of $\bV$ by ${\bv}_i$. We use induction on $m$ to show that there exists a basis ${\cal B}=\{i_1, \ldots, i_r\}$ and a vector $\ba_j \in \mathbb{R}^r$ such that $\forall \, \, j \in {\cal B}^c$, 
	${\bv}_j = \sum_{i \in {\cal B}}{\ba}_{j,i}{\bv}_i$ with  $|{\ba}_{j,i}| \leq 2^{m-r-1} \quad \forall\, \, i \in {\cal B}.$\\
	
	\noindent$\bullet\;$\textit{Induction Base Case } $m=r+1$:
	Without loss of generality, assume ${\cal B}=\{1, \ldots ,r\}$. Since the case of $\bv_{r+1}=\bzero$ trivially holds, we consider $\bv_{r+1} \neq \bzero$. By the property of basis, there exists a non-zero vector $\ba_{r+1} \in \mathbb{R}^r$ such that $\bv_{r+1}=\sum_{i=1}^r {\ba}_{r+1,i}\bv_i$.\\
	Let $i^{*} = \displaystyle{\arg\max_{i \in {\cal B}}} \;|\ba_{r+1,i}|$. If $|{\ba}_{r+1,i^{*}}|  \leq  1,$ then the induction hypothesis is true. Otherwise, when $|{\ba}_{r+1,i^{*}}|  > 1$, we have
	\begin{flalign*}
	{\bv}_{i^{*}}  = \underbrace{\dfrac{1}{{\ba}_{r+1,i^{*}}}}_{\bar{\ba}_{r+1,r+1}} {\bv}_{r+1} - \sum_{i=1;\, i \neq i^{*} }^r \underbrace{\dfrac{{\ba}_{r+1,i}}{{\ba}_{r+1,i^{*}}}}_{\bar{\ba}_{r+1,i}}{\bv}_i 
	= \sum_{i \in {\cal B}^{*}} \bar{\ba}_{r+1,i}{\bv}_i,
	\end{flalign*}
	where ${\cal B}^{*} = \left({\cal B} \cup\{r+1\}\right)\backslash\{i^{*}\},
	$
	i.e., we remove the item $i^*$ from $\mathcal{B}$ and include the item $r+1$ instead. 
	Since $|\bar{\ba}_{r+1,i}| \leq 1 $, the induction base case holds.\\
	
	\noindent$\bullet\;$\textit{Inductive Step:} Assume the induction hypothesis is true for $m>r$, we show it is also true for $m+1$. Without loss of generality we can assume that ${\cal B} = \{1, \ldots ,r\}$. By induction hypothesis,
	${\bv}_j = \sum_{i=1}^r \, {\ba}_{j,i} \,\bv_i$ with $|{\ba}_{j,i}| \leq 2^{m-r-1}, \; \forall \, j=\{r+1, \ldots ,  m\}.$
	Since the case of ${\bv}_{m+1}=\bzero$ trivially holds, we consider ${\bv}_{m+1} \neq \bzero$. Since $\mathcal{B}$ is a basis, there exists ${\ba}_{m+1} \neq \bzero$ such that ${\bv}_{m+1}=\sum_{i=1}^r {\ba}_{m+1,i} \, {\bv}_i$. Let $i^{*} = \displaystyle{\operatornamewithlimits{\mbox{argmax}}_{i \in {\cal B}}}\; |{\ba}_{m+1,i}|$. If $|{\ba}_{m+1,i^{*}}|  \leq 2^{m-r} $, the induction step is done. Otherwise, for the  case of $|{\ba}_{m+1,i^{*}}|  > 2^{m-r},$ we have
	\begin{flalign*}
	{\bv}_{i^{*}}  = \underbrace{\dfrac{1}{{\ba}_{m+1,i^{*}}}}_{\bar{\ba}_{m+1,m+1}} {\bv}_{m+1} - \sum_{i=1;\, i \neq i^{*} }^r \underbrace{\dfrac{{\ba}_{m+1,i}}{{\ba}_{m+1,i^{*}}}}_{\bar{\ba}_{m+1,i}}{\bv}_i 
	= \sum_{i \in {\cal B}^{*}} \bar{\ba}_{m+1,i}{\bv}_i, 
	\end{flalign*}
	where ${\cal B}^{*} = \left({\cal B} \cup \{m+1\}\right)\backslash \{i^{*}\}$ and clearly $|\bar{\ba}_{m+1,i}|\leq 1,$ $\forall \, \, i \in {\cal B}^{*}$ according to the definition of $i^*$. For all $j \in \{r+1 , \ldots , m\}$
	\begin{align}
	{\bv}_{j} & = \sum_{i=1;\, i \neq i^{*} }^r {\ba}_{j,i} \, {\bv}_i 	 + {\ba}_{j,i^{*}}{\bv}_{i^{*}} =  \sum_{ i \neq i^{*} } {\ba}_{j,i} \, {\bv}_i + \dfrac{{\ba}_{j,i^{*}}}{{\ba}_{m+1,i^{*}}} {\bv}_{m+1} - \sum_{i \neq i^{*} } \dfrac{{\ba}_{m+1,i} \, {\ba}_{j,i^{*}}}{{\ba}_{m+1,i^{*}}}{\bv}_i \nonumber\\
	& = \sum_{i=1;\, i \neq i^{*} }^r \big(\underbrace{{\ba}_{j,i} - \dfrac{{\ba}_{j,i^{*}} \, {\ba}_{m+1,i}}{{\ba}_{m+1,i^{*}}}}_{\bar{\ba}_{j,i}}\big) {\bv}_{i} +\underbrace{\dfrac{{\ba}_{j,i^{*}}}{{\ba}_{m+1,i^{*}}}}_{\bar{\ba}_{j,m+1}}{\bv}_{m+1} = \sum_{i \in {\cal B}^{*}} \bar{\ba}_{j,i}{\bv}_i. \nonumber
	\end{align}
	It remains to show that $|\bar{\ba}_{j,i}| \leq 2^{m-r}$ for all $i \in {\cal B^{*}}$, $j \in \{r+1, \ldots , m\}$. Let us first consider $i \in {\cal B^{*}} \backslash \{m+1\}$ and $j \in \{r+1, \ldots , m\}$:
	\begin{flalign*}
	|\bar{\ba}_{j,i} | & \leq |{\ba}_{j,i}| + \Bigl|{\ba}_{j,i^{*}} \dfrac{{\ba}_{m+1,i}}{{\ba}_{m+1,i^{*}}}\Bigr| 
	\leq 2^{m-r-1} + 2^{m-r-1} \Bigl|\dfrac{{\ba}_{m+1,i}}{{\ba}_{m+1,i^{*}}}\Bigr| 
	\leq 2^{m-r} ,
	\end{flalign*}
	where the first inequality holds by triangular inequality, the second inequality holds by the induction hypothesis, and the last inequality holds by the definition of $i^{*}$.
	For $i=m+1$, $|\bar{\ba}_{j,m+1}| = \Bigl|\dfrac{{\ba}_{j,i^{*}}}{{\ba}_{m+1,i^{*}}}\Bigr| \leq \Bigl|\dfrac{2^{m-r-1}}{{\ba}_{m+1,i^{*}}} \Bigr| \leq 2^{m-r}$. This concludes the inductive step and completes our proof. 
	\end{proof}
	
 The following results show that by using local openness of linear mappings and standard {\color{black} SVD}, without loss of generality, we can assume that the product $\bbW_1\bbW_2$ is a diagonal matrix.
 

\begin{lemma}\label{lm:SVD}
	Let ${\bW_1} \in \mathbb{R}^{m \times k}$ and ${\bW_2} \in \mathbb{R}^{k \times n}$. Assume further that  ${\bW_1}{\bW_2} = {\bU} \bSigma {\bV}^{\top}$ is a singular value decomposition of the matrix product $\bW_1\bW_2$ with $\bU \in \mathbb{R}^{m \times m}$, $\bV \in \mathbb{R}^{n \times n}$, and $\bSigma \in \mathbb{R}^{m \times n}$. Then $\mathcal{M}(\cdot,\cdot) \mbox{ is locally open at } ({\bW_1},{\bW_2})$  if and only if  $\mathcal{M}(\cdot,\cdot)$  is locally open at  $({\bU}^{\top}{\bW_1} , {\bW_2}\bV)$.
	
\end{lemma}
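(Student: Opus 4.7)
The plan is to exploit that $\bb{U}$ and $\bb{V}$ from the SVD are orthogonal, so the change of variables $\Phi(\bb{W}_1,\bb{W}_2) \triangleq (\bb{U}^T\bb{W}_1,\bb{W}_2\bb{V})$ is a linear bijection of $\mathbb{R}^{m\times k}\times \mathbb{R}^{k\times n}$ that preserves the Frobenius norm, and similarly on the output side the map $\Psi(\bb{Z}) \triangleq \bb{U}^T\bb{Z}\bb{V}$ is a linear isometry of $\mathbb{R}^{m\times n}$. Both are therefore homeomorphisms, and both send open balls of radius $r$ centred at a point to open balls of the same radius centred at the image point. Note also that $\rank(\bb{U}^T\bb{Z}\bb{V})=\rank(\bb{Z})$, so $\Psi$ maps $\mathcal{R}_{\mathcal{M}}$ bijectively onto itself; in particular, a relatively open neighbourhood of $\bb{Z}$ in $\mathcal{R}_{\mathcal{M}}$ corresponds under $\Psi$ to a relatively open neighbourhood of $\Psi(\bb{Z})$ in $\mathcal{R}_{\mathcal{M}}$.

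The bridge between the two points is the intertwining identity
\[
\mathcal{M}\bigl(\Phi(\bb{W}_1,\bb{W}_2)\bigr)=(\bb{U}^T\bb{W}_1)(\bb{W}_2\bb{V})=\bb{U}^T\bb{W}_1\bb{W}_2\bb{V}=\Psi\bigl(\mathcal{M}(\bb{W}_1,\bb{W}_2)\bigr),
\]
so $\mathcal{M}\circ\Phi=\Psi\circ\mathcal{M}$. I would then translate the local openness condition directly through this identity: suppose $\mathcal{M}$ is locally open at $(\bb{W}_1,\bb{W}_2)$ and let $\epsilon>0$ be given. Pick $\delta>0$ with $\mathcal{B}_{\delta}(\bb{W}_1\bb{W}_2)\cap\mathcal{R}_{\mathcal{M}}\subseteq \mathcal{M}\bigl(\mathcal{B}_{\epsilon}(\bb{W}_1,\bb{W}_2)\bigr)$. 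Applying $\Psi$ to both sides and using $\mathcal{M}\circ\Phi=\Psi\circ\mathcal{M}$ together with the isometry property, I get $\mathcal{B}_{\delta}\bigl(\Psi(\bb{W}_1\bb{W}_2)\bigr)\cap\mathcal{R}_{\mathcal{M}}\subseteq \mathcal{M}\bigl(\Phi(\mathcal{B}_{\epsilon}(\bb{W}_1,\bb{W}_2))\bigr)=\mathcal{M}\bigl(\mathcal{B}_{\epsilon}(\bb{U}^T\bb{W}_1,\bb{W}_2\bb{V})\bigr)$, which is the local openness condition at $(\bb{U}^T\bb{W}_1,\bb{W}_2\bb{V})$. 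Since $\Phi^{-1}(\bb{W}_1',\bb{W}_2')=(\bb{U}\bb{W}_1',\bb{W}_2'\bb{V}^T)$ has exactly the same structure (isometry with orthogonal factors), the converse implication is symmetric.

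No step in this argument is a real obstacle: the entire content is that local openness, being defined purely in terms of metric balls, is preserved by isometric changes of coordinates on the domain and codomain, and the matrix product intertwines these two coordinate changes. The only points one has to be careful about are (i) that the balls in the definition are with respect to the Frobenius norm on the product space $\mathbb{R}^{m\times k}\times\mathbb{R}^{k\times n}$, so the isometry property of $\Phi$ must account for both factors simultaneously (which it does, since $\|\bb{U}^T\bb{W}_1\|^2+\|\bb{W}_2\bb{V}\|^2=\|\bb{W}_1\|^2+\|\bb{W}_2\|^2$), and (ii) that the relevant openness is relative to $\mathcal{R}_{\mathcal{M}}$, which is handled by the rank invariance of $\Psi$ noted above.
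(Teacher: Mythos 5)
Your proof is correct, and it is precisely the argument the paper has in mind: the paper omits the proof of this lemma as "a direct consequence of the definition of local openness," and your write-up supplies exactly that consequence via the intertwining identity $\mathcal{M}\circ\Phi=\Psi\circ\mathcal{M}$ together with the fact that the orthogonal changes of coordinates are isometries preserving balls and preserving rank (hence the range $\mathcal{R}_{\mathcal{M}}$). Nothing is missing; your two cautionary points, the product-space metric and the relative openness in $\mathcal{R}_{\mathcal{M}}$, are the only details worth checking and you check both.
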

The proof of this Lemma is a direct consequence of the definition of local openness. Lemma~\ref{lm:SVD} implies that for  proving Theorem~\ref{thm:Loc_Open}, without loss of generality, we can assume that the product $\bbW_1\bbW_2$ is a diagonal matrix.

 \begin{lemma} \label{lm:TransformationConditions}
 Let ${\bW_1} \in \mathbb{R}^{m \times k}$ and ${\bW_2} \in \mathbb{R}^{k \times n}$. Assume further that  ${\bW_1}{\bW_2} = {\bU} \bSigma {\bV}^{\top}$ is a singular value decomposition of the matrix product $\bW_1\bW_2$ with $\bU \in \mathbb{R}^{m \times m}$, $\bV \in \mathbb{R}^{n \times n}$, and $\bSigma \in \mathbb{R}^{m \times n}$.  Define $\bbW_1 \triangleq \bU^{\top} \bW_1$ and $\bbW_2 \triangleq \bW_2 \bV$. Then the condition $(A)$ below holds true if and only if the condition $(B)$ is true. Similarly, condition $(C)$ is true if and only if condition $(D)$ is true.
 \begin{itemize}
     \item[$(A)$] $\exists \, \widehat{\bW}_1 \in \mathbb{R}^{m\times k} \mbox{ such that } \widehat{\bW}_1\bW_2=\bzero  \mbox{ and } \bW_1 + \widehat{\bW}_1 \mbox{ is full column rank.}$
     \item[$(B)$] $\exists \, \tbW_1 \in \mathbb{R}^{m\times k} \mbox{ such that } \tbW_1\bbW_2=\bzero  \mbox{ and } \bbW_1 + \tbW_1 \mbox{ is full column rank.}$
     \item[$(C)$] $ \exists \,\widehat{\bW}_2 \in \mathbb{R}^{k \times n} \mbox{ such that } \bW_1\widehat{\bW}_2=\bzero  \mbox{ and } \bW_2 + \widehat{\bW}_2 \mbox{ is full row rank.}$
     \item[$(D)$] $ \exists \,\tbW_2 \in \mathbb{R}^{k \times n} \mbox{ such that } \bbW_1\tbW_2=\bzero  \mbox{ and } \bbW_2 + \tbW_2 \mbox{ is full row rank.}$
 \end{itemize}
 \end{lemma}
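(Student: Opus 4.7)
The plan is to exploit the orthogonality of $\bU$ and $\bV$: since $\bU \bU^T = \bI_m$ and $\bV \bV^T = \bI_n$, both matrices are invertible, and multiplication by an invertible matrix (on either side) preserves rank. Given this, the equivalences $(A)\Leftrightarrow(B)$ and $(C)\Leftrightarrow(D)$ should reduce to a direct change-of-variable argument, since the hypothesized matrices $\widehat{\bW}_i$ and $\tbW_i$ differ by a left- or right-multiplication by $\bU^T$ or $\bV$.

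For the equivalence $(A)\Leftrightarrow(B)$, I would first assume $(A)$ and set $\tbW_1 \triangleq \bU^T \widehat{\bW}_1$. Then $\tbW_1 \bbW_2 = \bU^T \widehat{\bW}_1 \bW_2 \bV = \bU^T \cdot \bb{0} \cdot \bV = \bb{0}$, while $\bbW_1 + \tbW_1 = \bU^T(\bW_1 + \widehat{\bW}_1)$ is full column rank because left multiplication by the invertible matrix $\bU^T$ preserves rank. For the converse, assume $(B)$ and set $\widehat{\bW}_1 \triangleq \bU \tbW_1$; using the identity $\bW_2 = \bbW_2 \bV^T$ one obtains $\widehat{\bW}_1 \bW_2 = \bU \tbW_1 \bbW_2 \bV^T = \bb{0}$, and $\bW_1 + \widehat{\bW}_1 = \bU(\bbW_1 + \tbW_1)$ is again full column rank by invertibility of $\bU$.

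The equivalence $(C)\Leftrightarrow(D)$ then follows by the symmetric argument applied on the right. Assuming $(C)$, define $\tbW_2 \triangleq \widehat{\bW}_2 \bV$; this gives $\bbW_1 \tbW_2 = \bU^T \bW_1 \widehat{\bW}_2 \bV = \bb{0}$ and $\bbW_2 + \tbW_2 = (\bW_2 + \widehat{\bW}_2)\bV$ which has full row rank since $\bV$ is invertible. The converse uses $\widehat{\bW}_2 \triangleq \tbW_2 \bV^T$ together with $\bW_1 = \bU \bbW_1$.

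There is essentially no serious obstacle in this lemma: it is a bookkeeping result confirming that the conditions of Theorem~\ref{thm:Loc_Open} are invariant under orthogonal changes of basis on the left of $\bW_1$ and on the right of $\bW_2$, which is exactly the type of transformation induced by the SVD of $\bW_1 \bW_2$. The only point that merits a careful check is that, in each direction, the zero-product identity transforms correctly; this is ensured by $\bU^T \bU = \bI_m$ and $\bV \bV^T = \bI_n$ as used above. Combined with Lemma~\ref{lm:SVD}, this lemma will let us assume in the proof of Theorem~\ref{thm:Loc_Open} that $\bW_1 \bW_2$ is already in diagonal form without loss of generality.
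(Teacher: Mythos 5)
Your proof is correct and follows exactly the paper's approach: the paper's one-line proof sets $\tbW_1 = \bU^T \widehat{\bW}_1$ and $\tbW_2 = \widehat{\bW}_2 \bV$, which is precisely your change of variables, and your version simply spells out the verification of both directions using the orthogonality of $\bU$ and $\bV$.
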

 \begin{proof}
 Setting $\tbW_1 = \bU^{\top} \widehat{\bW}_1$ and $\tbW_2 = \widehat{\bW}_2 \bV$ leads to the desired result.
 \end{proof}

We next show in Lemma \ref{lm:tri_eq} that if $k < \min\{m,n\}$ and $\rank(\bW_1)=\rank(\bW_2)$, then statements $i,ii,iii,$ and $iv$ in Theorem \ref{thm:Loc_Open} are all equivalent.

\begin{lemma}\label{lm:tri_eq} 
	Let $\bW_1 \in \mathbb{R}^{m \times k}$, $\bW_2 \in \mathbb{R}^{k \times n}$ with $\rank(\bW_1)=\rank(\bW_2)$. Assume further that $k<\min\{m,n\}$.  Then, the following conditions are equivalent 
	\begin{enumerate}[i)]
		\item $\exists \, \tbW_1 \in \mathbb{R}^{m\times k} \mbox{ such that } \tbW_1\bW_2=\bzero  \mbox{ and } \bW_1 + \tbW_1 \mbox{ is full column rank.}$
		\vspace{0.3cm}
		\item $ \exists \,\tbW_2 \in \mathbb{R}^{k \times n} \mbox{ such that } \bW_1\tbW_2=\bzero  \mbox{ and } \bW_2 + \tbW_2 \mbox{ is full row rank.}$
		
		\vspace{0.3cm}
		\item $\dim\big(\, \mathcal{N}(\bW_1) \, \cap \, \mathcal{C}(\bW_2)\big)=0$. 
		\vspace{0.3cm}
		\item$\dim\big( \, \mathcal{N}(\bW_2^{\top}) \, \cap  \, \mathcal{C}(\bW_1^{\top})\, \big)=0$.
	\end{enumerate}
\end{lemma}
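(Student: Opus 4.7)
The plan is to close the chain $(i) \Leftrightarrow (iii) \Leftrightarrow (iv) \Leftrightarrow (ii)$, exploiting a transposition symmetry so that the two ``constructive'' equivalences $(i)\Leftrightarrow(iii)$ and $(ii)\Leftrightarrow(iv)$ reduce to a single argument. The bridge $(iii)\Leftrightarrow(iv)$ will be a clean exercise in orthogonal complements.

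The implication $(i)\Rightarrow(iii)$ is nearly automatic: for any $\bb{v} \in \mathcal{N}(\bW_1)\cap \mathcal{C}(\bW_2)$, write $\bb{v}=\bW_2\bb{u}$ and note that $(\bW_1+\tbW_1)\bb{v} = \bW_1 \bb{v} + \tbW_1\bW_2\bb{u} = 0$, so the full column rank of $\bW_1+\tbW_1$ forces $\bb{v}=0$. For $(iii)\Leftrightarrow(iv)$, I would apply Grassmann's identity together with $\dim \mathcal{N}(\bW_1) = k-r$ to obtain
\[
\dim\bigl(\mathcal{N}(\bW_1)\cap \mathcal{C}(\bW_2)\bigr) = k - \dim\bigl(\mathcal{N}(\bW_1)+\mathcal{C}(\bW_2)\bigr),
\]
so that $(iii)$ is equivalent to $\mathcal{N}(\bW_1)+\mathcal{C}(\bW_2)=\mathbb{R}^k$. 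Taking orthogonal complements in $\mathbb{R}^k$ and using $(U+V)^\perp = U^\perp\cap V^\perp$ together with $\mathcal{N}(\bW_1)^\perp = \mathcal{C}(\bW_1^T)$ and $\mathcal{C}(\bW_2)^\perp=\mathcal{N}(\bW_2^T)$, this collapses to $(iv)$.

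The real work lies in the constructive direction $(iii)\Rightarrow(i)$. Assuming $\mathcal{N}(\bW_1)\cap \mathcal{C}(\bW_2)=\{0\}$, set $V := \mathcal{C}(\bW_2)$, an $r$-dimensional subspace of $\mathbb{R}^k$ on which $\bW_1$ is injective; its image $W := \bW_1 V \subset \mathbb{R}^m$ therefore has dimension $r$. Pick any complement $V'\subset\mathbb{R}^k$ of dimension $k-r$ so that $\mathbb{R}^k = V\oplus V'$. Because $k<m$, we have $r + (k-r) = k \leq m$, hence one can choose a $(k-r)$-dimensional subspace $W''\subset \mathbb{R}^m$ with $W\cap W''=\{0\}$. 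I would then define $\tbW_1$ by $\tbW_1|_V = 0$ and by specifying $\tbW_1|_{V'}$ so that $(\bW_1+\tbW_1)$ sends a prescribed basis of $V'$ onto a prescribed basis of $W''$. The condition $\tbW_1|_V=0$ gives $\tbW_1\bW_2=0$, while $\bW_1+\tbW_1$ carries the union of bases of $V$ and $V'$ to a basis of $W\oplus W''$, proving it has full column rank. The remaining implication $(iv)\Rightarrow(ii)$ follows by applying the very same argument to the transposed pair $(\bW_2^T,\bW_1^T)$, which again satisfies $k<\min\{n,m\}$ with equal ranks.

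The main technical delicacy is the use of the inequality $k<m$ (and, symmetrically, $k<n$ for the transposed step) to secure the auxiliary subspace $W''$; beyond that, everything is routine dimension counting and basis extension, so I do not anticipate any further obstacles.
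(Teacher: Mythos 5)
Your proof is correct and follows essentially the same route as the paper: one constructive equivalence with condition $iii)$ proved directly, its counterpart obtained by applying the same argument to the transposed pair $(\bW_2^T,\bW_1^T)$, and the bridge $iii)\Leftrightarrow iv)$ via Grassmann's identity and orthogonal complements. The only differences are cosmetic (you treat $i)\Leftrightarrow iii)$ directly where the paper treats $ii)\Leftrightarrow iii)$, and your explicit construction of $\tbW_1$ on the decomposition $V\oplus V'$ is a transposed variant of the paper's construction of $\tbW_2$ from a basis of $\mathcal{N}(\bW_1)$).
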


\begin{proof}
	To prove the desired result we show the equivalences $ii \Leftrightarrow iii$, and $i \Leftrightarrow iv$. Then we complete the proof by showing $iii \Leftrightarrow iv$. 
	
	We first show the direction $``  ii \Rightarrow iii ''$. Consider ${\bW_1} \in \mathbb{R}^{m \times k}, {\bW_2} \in \mathbb{R}^{k \times n}$  with both being rank~$r$ matrices. Suppose $ii$ holds, then
	\begin{flalign}\label{eq:upperbd1}
	\mathcal{C}({\tbW_2}) \subseteq \mathcal{N}(\bW_1) \mbox{ which implies } \rank(\tbW_2) \leq \dim \big( \, \mathcal{N}(\bW_1) \, \big)=k-r. 
	\end{flalign}
	Also,
	$k=\rank({\bW_2} + \tbW_2) \leq \rank({\bW_2}) + \rank(\tbW_2)=r+\rank(\tbW_2).$
	This inequality combined with (\ref{eq:upperbd1})  implies
	that $\rank(\tbW_2)\, = \, k-r.$
	Note that dim$\big(\, \mathcal{C}(\tbW_2)\, \big)=\text{dim}\big(\, \mathcal{N}(\bW_1) \, \big)$ and $\mathcal{C}(\, \tbW_2 \,) \subseteq \mathcal{N}( \,\bW_1\,)$, which implies that $\mathcal{C}(\, \tbW_2\, )=\mathcal{N}(\, \bW_1 \, ).$ Then, since $\rank(\bW_2 + \tbW_2)= \rank(\bW_2) + \rank(\tbW_2),$ we get
	\[\emptyset =\mathcal{C}(\, \tbW_2\,) \,\cap \, \mathcal{C}(\,\bW_2\,)=\mathcal{N}(\,\bW_1\,)\, \cap \, \mathcal{C}(\, \bW_2\,) \,\Rightarrow \, \text{dim}\big(\, \mathcal{N}(\bW_1) \, \cap \, \mathcal{C}(\, \bW_2 \,)\big)=0.	\]
	We now show the other direction $`` ii \Leftarrow iii ''$. Without loss of generality, let $\bW_2 = \left[ ({\bW_2}^{\prime})^{k \times r}\bA^{r \times n-r} \, \, \, ({\bW_2}^{\prime})^{k \times r} \right]$ where columns of ${\bW_2}^{\prime}$ are linearly independent and let $\tbW_2=\epsilon \left[\bw_{1}^1, \ldots, \bw_{1}^{k-r}, \mathbf{0}, \ldots, \mathbf{0} \right] \in \mathbb{R}^{k \times n}$ be a rank $k-r$ matrix where $\bw_{1}^{i}$ are unit basis of $\mathcal{N}(\, {\bW_1} \,)$ which yields $\mathcal{C}(\, \tbW_2 \,)=\mathcal{N}(\, \bW_1 \, )$. Then since dim$\big({\cal N}(\bW_1) \, \cap \, {\cal C}(\bW_2) \big)=0$, we get rank($\bW_2 + \tbW_2)=k$ for generic choice of $\epsilon$. This completes the proof.\\
	
	Note that by setting ${\bW_1}=\bW_2^{\top}$ and ${\bW_2}=\bW_1^{\top}$, the same proof can be used to show $i \Leftrightarrow iv$. Next, we will prove the equivalence $iii \Leftrightarrow iv$. Notice that
	\[\arraycolsep=1pt\def\arraystretch{1.4}
	\begin{array}{ll}
	\dim\Big(\mbox{span} \big( \mathcal{N}(\bW_1)\cup \mathcal{C}(\bW_2)\big)\Big) & =\dim\left(\mathcal{N}(\bW_1)\right) + \dim\left(\mathcal{C}( \bW_2)\right) - \dim\left(\mathcal{N}( \bW_1 )\cap \mathcal{C}(\bW_2 )\right)  \\
	&= k - r + r - \mbox{dim}\big(\mathcal{N}( \, \bW_1 \,)\cap \mathcal{C}(\,\bW_2 \,)\big)\\
	& =  k - \mbox{dim}\big(\mathcal{N}( \, \bW_1 \,)\cap \mathcal{C}(\,\bW_2 \,)\big).
	\end{array}\]
	Thus,
	\[\arraycolsep=1pt\def\arraystretch{1.4}
	\begin{array}{ll}
	\dim\big(\mathcal{N}( \, \bW_1\,)\cap \mathcal{C}(\,\bW_2\,)\big) \neq 0  &\Leftrightarrow \, \mbox{dim}\Big(\mbox{span}\big(\mathcal{N}( \, \bW_1 \,)\cup \mathcal{C}(\,\bW_2 \,)\big)\Big) < k\\ 
	&\Leftrightarrow  \, \exists \, \ba\neq 0 \mbox{ such that } \ba \, \perp \, \mathcal{C}(\, \bW_2 \,), \mbox{ and }  \ba \, \perp \, \mathcal{N}(\, \bW_1 \,)\\
	&\Leftrightarrow \, \exists \, \ba \neq 0 \mbox{ such that } \ba \in \mathcal{N}(\, \bW_2^{\top} \,), \mbox{ and } \ba \in \mathcal{C}(\, \bW_1^{\top} \,)\\
	&\Leftrightarrow  \mbox{ dim}\big( \mathcal{N}(\, \bW_2^{\top} \big) \, \cap \, \mathcal{C}(\,\bW_1^{\top}\,)\big) \neq 0,   
	\end{array}\]
	which completes the proof.
\end{proof}

\begin{lemma}\label{lm:Y_2=0}
	Let ${\bW_1} \in \mathbb{R}^{m \times k}$, ${\bW_2} \in \mathbb{R}^{k \times n}$ with $k < \min\{m,n\}$ and let $r \triangleq \rank(\bW_1\bW_2)$. Assume further that ${\bW_1}{\bW_2} = \bU\bSigma {\bV}^{\top}$ is an {\color{black}SVD} 
	of $\bW_1\bW_2$ with $\bU \in \mathbb{R}^{m \times m}$, and $\bV \in \mathbb{R}^{n \times n}$, and  $\bSigma \in \mathbb{R}^{m \times n}$. If
	\[\begin{cases}
	i)\, \exists \, \tbW_1 \in \mathbb{R}^{m\times k} \mbox{ such that } \tbW_1{\bW_2}=\bzero  \mbox{ and } {\bW_1} + \tbW_1 \mbox{ is full column rank.}\\
	\hspace{2in} \mbox{and}\\
	ii)\, \exists \, \tbW_2 \in \mathbb{R}^{k\times n} \mbox{ such that } {\bW_1}\tbW_2=\bzero  \mbox{ and } {\bW_2} + \tbW_2 \mbox{ is full row rank.}\\
	\end{cases}\]
	then
	\[\rank(\bW_1) = \rank({\bW_2}), \quad \big({\bW_2}\bV\big)_{:,r+1:n} = \mathbf{0}, \quad  \mbox{and} \quad \big({\bU}^{\top}{\bW_1}\big)_{r+1:n,:} = \bzero.\]
\end{lemma}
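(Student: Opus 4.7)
The plan is to first derive the rank equality $\rank(\bb{W}_1) = \rank(\bb{W}_2)$ by using hypotheses (i) and (ii) in tandem, then invoke Lemma~\ref{lm:tri_eq} to translate these hypotheses into the trivial-intersection statements $\dim\bigl(\mathcal{N}(\bb{W}_1) \cap \mathcal{C}(\bb{W}_2)\bigr) = 0$ and $\dim\bigl(\mathcal{N}(\bb{W}_2^T) \cap \mathcal{C}(\bb{W}_1^T)\bigr) = 0$, and finally read off the claimed zero blocks from the SVD of $\bb{W}_1\bb{W}_2$.

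For the rank equality, hypothesis (i) says $\tbW_1\bb{W}_2 = \mathbf{0}$, which forces $\mathcal{C}(\bb{W}_2) \subseteq \mathcal{N}(\tbW_1)$; hence $\rank(\bb{W}_2) \leq k - \rank(\tbW_1)$. Combining this with $\rank(\bb{W}_1 + \tbW_1) = k$ (full column rank) and subadditivity of rank gives $k \leq \rank(\bb{W}_1) + \rank(\tbW_1) \leq \rank(\bb{W}_1) + k - \rank(\bb{W}_2)$, i.e.\ $\rank(\bb{W}_2) \leq \rank(\bb{W}_1)$. The symmetric manipulation of (ii), using $\bb{W}_1\tbW_2 = \mathbf{0}$ and $\rank(\bb{W}_2 + \tbW_2) = k$, produces the reverse inequality, so $\rank(\bb{W}_1) = \rank(\bb{W}_2)$; call this common value $\rho$.

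With the rank equality in hand, Lemma~\ref{lm:tri_eq} applies and upgrades (i) to $\dim\bigl(\mathcal{N}(\bb{W}_2^T) \cap \mathcal{C}(\bb{W}_1^T)\bigr) = 0$ and (ii) to $\dim\bigl(\mathcal{N}(\bb{W}_1) \cap \mathcal{C}(\bb{W}_2)\bigr) = 0$. I then exploit the SVD $\bb{W}_1\bb{W}_2 = \bb{U}\bb{\Sigma}\bb{V}^T$: for every $i \geq r+1$, the column $\bb{V}_{:,i}$ lies in the right nullspace of $\bb{W}_1\bb{W}_2$, so $\bb{W}_1(\bb{W}_2\bb{V}_{:,i}) = \mathbf{0}$, and hence $\bb{W}_2\bb{V}_{:,i} \in \mathcal{C}(\bb{W}_2) \cap \mathcal{N}(\bb{W}_1) = \{\mathbf{0}\}$, which proves $\big(\bb{W}_2\bb{V}\big)_{:, r+1:n} = \mathbf{0}$. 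Symmetrically, $\bb{U}_{:,i}^T\bb{W}_1\bb{W}_2 = \mathbf{0}$ together with the second trivial-intersection statement yields $\bb{U}_{:,i}^T\bb{W}_1 \in \mathcal{C}(\bb{W}_1^T) \cap \mathcal{N}(\bb{W}_2^T) = \{\mathbf{0}\}$, proving $\big(\bb{U}^T\bb{W}_1\big)_{r+1:m, :} = \mathbf{0}$.

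The step I expect to be the main obstacle is the rank equality argument: I need to be careful that the two dimension inequalities coming from (i) and (ii) are genuinely tight, and that the hypothesis $k < \min\{m,n\}$ is invoked correctly so that ``full column rank'' and ``full row rank'' both mean rank exactly $k$ for the perturbed matrices. Once that step is secured, Lemma~\ref{lm:tri_eq} is immediately applicable, and the SVD step is a direct reading of how $\bb{U}$ and $\bb{V}$ align with the left and right nullspaces of the rank-$r$ product $\bb{W}_1\bb{W}_2$.
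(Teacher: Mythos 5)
Your proposal is correct and follows essentially the same route as the paper's proof: rank subadditivity applied to the two perturbation conditions yields $\rank(\bb{W}_1)=\rank(\bb{W}_2)$, Lemma~\ref{lm:tri_eq} then gives the trivial intersections $\mathcal{N}(\bb{W}_1)\cap\mathcal{C}(\bb{W}_2)=\{\mathbf{0}\}$ and $\mathcal{N}(\bb{W}_2^T)\cap\mathcal{C}(\bb{W}_1^T)=\{\mathbf{0}\}$, and the zero blocks follow by observing that the trailing singular vectors send $\bb{W}_2\bb{V}_{:,i}$ (resp.\ $\bb{W}_1^T\bb{U}_{:,i}$) into those intersections. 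The only cosmetic difference is that you index the second block as $r+1{:}m$, which is in fact the correct range for the $m\times k$ matrix $\bb{U}^T\bb{W}_1$ (the statement's $r+1{:}n$ appears to be a typo).
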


\begin{proof}
	Suppose that $ii)$ holds, then
	\begin{flalign}\label{eq:upperbd2}
	\mathcal{C}(\, \tbW_2 \,) \subseteq \mathcal{N}(\, \bW_1 \,) \Rightarrow \rank(\tbW_2) \leq \text{dim}\big(\, \mathcal{N}(\, {\bW_1}\, ) \,\big)=k-\rank({\bW_1}). 
	\end{flalign}
	Also,
	$
	k=\rank({\bW_2}+\tbW_2) \leq \rank({\bW_2}) + \rank(\tbW_2).
	$
	This inequality combined with (\ref{eq:upperbd2}) implies
	\begin{flalign}\label{Ineq1}
	k-\rank({\bW_2}) \leq \rank({\tbW_2}) \leq k-\rank({\bW_1})  \Rightarrow \rank({\bW_2}) \geq \rank({\bW_1}).
	\end{flalign}
	Similarly,  condition $i)$ implies $\rank({\bW_1}) \geq \rank({\bW_2})$. Combined with \eqref{Ineq1}, we obtain $\rank(\bW_1)=\rank({\bW_2})$. Therefore, Lemma~\ref{lm:tri_eq} implies $\textrm{dim}\big(\mathcal{N} ( {\bW_1} )\,  \cap \,  \mathcal{C}(\,{\bW_2})\big)=0$. It  follows from the {\color{black}SVD} 
	of  ${\bW_1}{\bW_2}$, that ${\bU}^{\top}{\bW_1}\big({\bW_2}\bV)_{:,r+1:n}={\bSigma}_{:,r+1:n}=\bzero$, or equivalently ${\bW_1}\big({\bW_2}{\bV}\big)_{:, r+1:n} = \bzero$. On the other hand, since ${\cal C} \big( \, \bW_2{\bV}_{:, r+1:n} \, \big) \subset {\cal C} \big( \, {\bW_2} \, \big)$ and ${\cal N}\big( \, {\bW_1} \, \big) \, \cap \, {\cal C}\big( \, {\bW_2} \, \big) = \emptyset$, we have $\big({\bW_2}\bV)_{:,r+1:n}=\bzero$. Similarly, we can show that $\big({\bU}^{\top}{\bW_1})_{r+1:n,:}=\bzero$.
\end{proof}


\begin{proposition} \label{prop:Non_Sym_case}
	Let  $\mathcal{M} (\bW_1, \bW_2) = \bW_1\bW_2$ be the matrix product mapping with $\bW_1 \in \mathbb{R}^{m \times k}$, $\bW_2 \in \mathbb{R}^{k \times n}$, and $k\, < \, \min\{m, n\}$. Then, $\mathcal{M} (\cdot,\cdot)$ is locally open in its range $\mathcal{R}_{\mathcal{M}}\triangleq \{\bZ\in \mathbb{R}^{m\times n}: \rank(\bZ)\leq k\}$ at the point $(\bbW_1,\bbW_2)$  if and only if the following two conditions are satisfied: 
	\[
	\begin{cases}
	i) \,\exists \, \tbW_1 \in \mathbb{R}^{m \times k} \mbox{ such that } \tbW_1 \bbW_2 =\bzero  \mbox{ and } \bbW_1 + \tbW_1  \mbox{ is full column rank.}\\
	\hspace{2in} \mbox{and}\\
	ii)\, \exists \, \tbW_2 \in \mathbb{R}^{k\times n} \mbox{ such that } \bbW_1\tbW_2=\bzero  \mbox{ and } \bbW_2 + \tbW_2 \mbox{ is full row rank.}
	\end{cases}
	\]
\end{proposition}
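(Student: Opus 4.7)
The plan is to reduce $(\bbW_1,\bbW_2)$ to a canonical normal form via the SVD of $\bbW_1\bbW_2$ and then prove each direction of the equivalence separately. By Lemmas~\ref{lm:SVD} and~\ref{lm:TransformationConditions}, both local openness and conditions~(i)/(ii) are invariant under the substitution $\bbW_1\mapsto \bU^T\bbW_1$, $\bbW_2\mapsto \bbW_2\bV$, where $\bU\bb{\Sigma}\bV^T$ is an SVD of $\bbW_1\bbW_2$. When both (i) and (ii) hold, Lemma~\ref{lm:Y_2=0} forces $\rank(\bbW_1)=\rank(\bbW_2)=r:=\rank(\bbW_1\bbW_2)$, with the bottom $m-r$ rows of $\bbW_1$ and the right $n-r$ columns of $\bbW_2$ equal to zero. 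A further invertible change of basis in the middle $\mathbb{R}^k$ space ($\bbW_1\mapsto \bbW_1P^{-1}$, $\bbW_2\mapsto P\bbW_2$) preserves the product and the local openness property, and puts the pair into the canonical form
\[
\bbW_1 \,=\, \begin{pmatrix}\bb{\Sigma}_r & \mathbf{0} \\ \mathbf{0} & \mathbf{0}\end{pmatrix}\in\mathbb{R}^{m\times k}, \qquad \bbW_2 \,=\, \begin{pmatrix} I_r & \mathbf{0} \\ \mathbf{0} & \mathbf{0}\end{pmatrix}\in\mathbb{R}^{k\times n},
\]
where $\bb{\Sigma}_r$ is the positive diagonal $r\times r$ block of singular values.

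For sufficiency I exhibit an explicit factorization of every $\widetilde{\bZ}\in\mathcal{R}_{\mathcal{M}}$ close to $\bbW_1\bbW_2$. Block-partition
\[
\widetilde{\bZ} \,=\, \begin{pmatrix} Z_{11} & Z_{12} \\ Z_{21} & Z_{22}\end{pmatrix}
\]
with $Z_{11}\in\mathbb{R}^{r\times r}$ close to $\bb{\Sigma}_r$, hence invertible. The block-rank identity then yields $\rank(\widetilde{\bZ})\leq k \Longleftrightarrow \rank(S)\leq k-r$, where $S:=Z_{22}-Z_{21}Z_{11}^{-1}Z_{12}$ is the Schur complement. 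Using an SVD of $S$ I factor $S=L_1L_2$ with $L_1\in\mathbb{R}^{(m-r)\times(k-r)}$, $L_2\in\mathbb{R}^{(k-r)\times(n-r)}$, and $\|L_1\|,\|L_2\|=O(\sqrt{\|S\|})$. Setting
\[
\widehat{\bW}_1 \,:=\, \begin{pmatrix} Z_{11} & \mathbf{0} \\ Z_{21} & L_1\end{pmatrix}, \qquad \widehat{\bW}_2 \,:=\, \begin{pmatrix} I_r & Z_{11}^{-1}Z_{12} \\ \mathbf{0} & L_2\end{pmatrix},
\]
a direct block multiplication gives $\widehat{\bW}_1\widehat{\bW}_2=\widetilde{\bZ}$, and $\|\widehat{\bW}_i-\bbW_i\|=O(\sqrt{\|\widetilde{\bZ}-\bbW_1\bbW_2\|})\to 0$, yielding local openness.

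For necessity I argue the contrapositive: if (ii) fails at $(\bbW_1,\bbW_2)$, I construct a $\widetilde{\bZ}\in\mathcal{R}_{\mathcal{M}}$ arbitrarily close to $\bbW_1\bbW_2$ that is not realizable by small perturbations of the factors; the case of (i) failing is symmetric. After the SVD reduction, the failure of (ii) forces, via the dimension count used in Lemma~\ref{lm:Y_2=0}, either a rank mismatch $\rank(\bbW_1)>\rank(\bbW_2)$ or a nonzero vector in $\mathcal{N}(\bbW_1)\cap\mathcal{C}(\bbW_2)$, each of which produces a defective block in the diagonalized coordinates. I then choose $\widetilde{\bZ}$ that places a small value $\epsilon$ in a single entry in the bottom-right $(m-r)\times(n-r)$ corner while keeping every other ``dead'' entry of $\widetilde{\bZ}$ (those in the zero rows/columns of $\bbW_1\bbW_2$) equal to zero; such $\widetilde{\bZ}$ lies in $\mathcal{R}_{\mathcal{M}}$ because its rank is at most $r+1\leq k$. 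If $\widehat{\bW}_1\widehat{\bW}_2=\widetilde{\bZ}$ with $\max_i\|\widehat{\bW}_i-\bbW_i\|$ small, then the vanishing of the off-diagonal dead entries of $\widetilde{\bZ}$, together with the invertibility of the top-left product block, forces the dead rows of $\widehat{\bW}_1$ and the dead columns of $\widehat{\bW}_2$ to be of strictly second order in $\max_i\|\widehat{\bW}_i-\bbW_i\|$; hence the target entry in the product is itself of strictly higher order than $\epsilon$, contradicting realizability.

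The main obstacle is the necessity step. One must verify that no alternative choice of $(\widehat{\bW}_1,\widehat{\bW}_2)$ can circumvent the obstruction, which requires iteratively bootstrapping the two coupled zero-constraint equations linking the dead rows of $\widehat{\bW}_1$ to the dead columns of $\widehat{\bW}_2$ and showing that both are forced to be of higher-than-first-order smallness relative to the target perturbation. The sufficiency, by contrast, is a routine block computation once the SVD normal form and the Schur-complement viewpoint are in hand.
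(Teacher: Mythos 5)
Your sufficiency argument is correct and takes a genuinely different route from the paper's. After the SVD reduction (Lemmas~\ref{lm:SVD}, \ref{lm:TransformationConditions}) and the zero-block structure from Lemma~\ref{lm:Y_2=0}, your middle-space change of basis and the Schur-complement factorization $S=Z_{22}-Z_{21}Z_{11}^{-1}Z_{12}=L_1L_2$ with $\rank(S)\leq k-r$ (Guttman additivity) give an explicit, clean factorization of every nearby $\widetilde{\bZ}\in\mathcal{R}_{\mathcal{M}}$. This replaces the paper's more laborious construction, which permutes columns, invokes the combinatorial bound of Lemma~\ref{boundA} on the coefficient matrix $\bb{A}$, and inverts $\bbW_2^1+\tbW_2^1$. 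The trade-off is quantitative: your factors move by $\mathcal{O}(\sqrt{\delta})$ rather than the paper's $\mathcal{O}(\delta)$, which is entirely sufficient for local openness but loses the Lipschitz-type perturbation bound the paper advertises.

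The necessity half has a genuine gap. First, the order-of-magnitude mechanism you describe does not yield a contradiction as stated: showing that the dead rows of $\widehat{\bW}_1$ and dead columns of $\widehat{\bW}_2$ are ``of strictly second order in $\max_i\|\widehat{\bW}_i-\bbW_i\|$'' bounds the offending product entry by $\mathcal{O}(\epsilon^2)$, where $\epsilon$ is \emph{fixed}, while the target value $\delta$ in that entry may be taken arbitrarily smaller than $\epsilon^2$; to disprove local openness you must show the entry is forced to be $o(\delta)$ (in the working examples it is in fact forced to be exactly zero, e.g.\ in the paper's Remark~4 the constraint $(1+\epsilon_2)\epsilon_3=0$ kills $\epsilon_3$ outright). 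Second, when condition (ii) fails the structural conclusions of Lemma~\ref{lm:Y_2=0} are unavailable, so there are no zero ``dead'' rows/columns of the factors to bootstrap from, and the choice of which corner entry to perturb is not innocuous: for $\bbW_1=\bb{e}_1\bb{e}_1^T$-type and $\bbW_2$ with $\mathcal{C}(\bbW_2)\subseteq\mathcal{N}(\bbW_1)$ one finds that some single-entry perturbations of $\bbW_1\bbW_2$ \emph{are} realizable while others are not, and your sketch does not identify the obstructed one. You would need to handle separately the sub-cases $\rank(\bbW_1)\neq\rank(\bbW_2)$ and $\dim(\mathcal{N}(\bbW_1)\cap\mathcal{C}(\bbW_2))>0$ and, in each, derive exact (not merely higher-order) vanishing of the relevant product entry from the equality constraints on the live rows and columns. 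The paper avoids all of this by restricting the map to the first $k$ columns of $\bbW_2$, where the range becomes all of $\mathbb{R}^{m\times k}$, and then importing the full-rank characterization of Proposition~\ref{Prev_result}; if you want a self-contained obstruction argument you must close the quantitative gap above.
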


\begin{proof}
	First of all, according to Lemma~\ref{lm:SVD} and Lemma~\ref{lm:TransformationConditions}, without loss of generality we can assume that the matrix product $\bbW_1 \bbW_2$ is of diagonal form.
	
	Let us start by first proving the ``only if'' direction. Notice that the result clearly holds when $\rank(\bbW_1)=\rank(\bbW_2)=k$ by choosing $\tbW_1=\tbW_2 = \bzero$.  Moreover, the mapping $\mathcal{M}(\cdot , \cdot)$ cannot be locally open if only one of the matrices $\bbW_1$ or $\bbW_2$ is rank deficient. To see this, let us assume that $\bbW_1$ is full column rank, while $\bbW_2$ is rank deficient. Assume further that the mapping ${\cal M}(\cdot , \cdot)$ is locally open at $(\bbW_1,\bbW_2)$, it follows from the definition of openness that the mapping ${\cal M}^1(\bW_1,\bW_2^1) \triangleq \bW_1\bW_2^1$ is locally open at $(\bbW_1, \bbW_2^1)$ where $\bbW_2^1 \triangleq (\bbW_2)_{:,1:k}$ only contains  the first $k$ columns of $\bbW_2$.
	Since the range of the mapping ${\cal M}^1$ at $(\bbW_1,\bbW_2^1)$ is the entire space $\mathbb{R}^{m \times k}$,  Proposition \ref{Prev_result} implies that
	\[
	\begin{cases}
	\exists \, \tbW_1  \mbox{ such that } \tbW_1\bbW_2^1=\bzero  \mbox{ and } \bbW_1 + \tbW_1 \mbox{ is full row rank.}\\
	\hspace{1.8in}  \mbox{or}\\
	\exists \, \tbW_2^1 \mbox{ such that } \bbW_1\tbW_2^1=\bzero  \mbox{ and } \bbW_2^1 + \tbW_2^1 \mbox{ is full rank.}\\
	\end{cases}
	\]
	
	Moreover, since $\bbW_1 \in \mathbb{R}^{m \times k}$ and $m>k$, it is impossible for $\tbW_1 + \bbW_1$ to be full row rank. On the other hand, since $\bbW_1$ is full column rank, $\bbW_1\tbW_2^1=\mathbf{0}$ implies that $\tbW_2^1=\mathbf{0}$; and hence $\bbW_2^1 + \tbW_2^1$ is not full column rank. Hence none of the above two conditions can hold and consequently, $\mathcal{M}(\cdot , \cdot)$ cannot be open at the point $(\bbW_1,\bW_2^1)$ in this case. Similarly, we can show that when $\bbW_1$ is rank deficient and $\bbW_2$ is full row rank, the mapping $\mathcal{M}(\cdot, \cdot)$ cannot be locally open. Hence, if $\bbW_1$ and $\bbW_2$ are not both full rank, then they both should be rank deficient.
	
	Assume that the matrices $\bbW_1$ and $\bbW_2$ are both rank deficient and $\mathcal{M}(\cdot , \cdot)$ is locally open at $(\bbW_1,\bbW_2)$. It  follows that ${\cal M}^1(\bW_1,\bW_2^1) \triangleq \bW_1\bW_2^1$ is locally open at $(\bbW_1,\bbW_2^1)$. By Proposition \ref{Prev_result}, and since there does not exist $\tbW_1$ such that $\bbW_1+\tbW_1$ is full row rank, there should exist $\tbW_2^1$ such that $\bbW_1 \tbW_2^1=\bzero$ and $\bbW_2^1+\tbW_2^1$ is full rank. Defining $\tbW_2\triangleq \left[
	\begin{array}{c|c}
	\tbW_2^1\, &\, \bzero
	\end{array}
	\right]$, we satisfy the desired condition $ii)$. Similarly, by looking at the transpose of the mapping $\mathcal{M}$, we can show that condition $i)$ is true when $\mathcal{M}$ is locally open.

	We now prove the ``if'' direction. Suppose $i)$ and $ii)$ hold.
	
	Let $\bSigma=\bbW_1\bbW_2 = \left[\begin{array}{cc} {\bSigma}_{: , 1:r} & \bzero \end{array}\right]$ be a rank $r$ matrix. Lemma~\ref{lm:Y_2=0} implies that $\rank(\bbW_1)= \rank(\bbW_2)$, and the last $n-r$ columns of $\bbW_2$ are all zero. We need to show that for any given $\epsilon > 0$, there exists $\delta > 0$, such that 
	${\ball}_{\delta}\big(\bbW_1\bbW_2 \big) \, \cap {\cal R}_{\cal M} \subseteq {\cal M}\Big({\ball}_{\epsilon}\big(\bbW_1\big),{\ball}_{\epsilon}\big(\bbW_2\big)\Big).$ Consider a perturbed matrix $\widetilde{\bSigma} \in {\ball}_{\delta}\big(\boldsymbol{\Sigma} \big) \, \cap \, {\cal R}_{\cal M}$, we show that $\widetilde{\bSigma} \in \mathcal{M}\Big({\ball}_{\epsilon}\big(\bbW_1\big),{\ball}_{\epsilon}\big(\bbW_2\big)\Big)$. Without loss of generality, and by permuting the columns of $\widetilde{\bSigma}$ if necessary, any perturbation of $\bsigma$ with rank at most $k$ can expressed as
	\[\widetilde{\bSigma} \, = \,
	\left[
	\begin{array}{c|c|c}
	\, \underbrace{{\bSigma}_{:,1:r} + {\bR}_{\delta}^{1}}_{m \times r} & 
	\underbrace{{\bR}_{\delta}^{2}}_{m \times (k-r)} &
	\underbrace{({\bSigma}_{:,1:r} + {\bR}_{\delta}^{1}){\bA}_1 + {\bR}_{\delta}^{2}{\bA}_2}_{m \times (n-k)} 
	\end{array}
	\right].\]
	Here ${\bA}_1 \in \mathbb{R}^{r \times (n-k)}$ and ${\bA}_2 \in \mathbb{R}^{(k-r) \times (n-k)}$ exist since rank($\widetilde{\bSigma}$)$\leq k$.  {\color{black}More specifically, these matrices exist since each of the last $n-k$ columns of $\widetilde{\bSigma}$ is a linear combination of the first $k$ columns. Moreover, let the perturbation matrix $\bR_{\delta}$ be defined as  $\bR_{\delta}\triangleq \|\widetilde{\bSigma} - \bSigma\|$. Then $\bR_{\delta}$ can expressed as}
	\[
	{\bR}_{\delta} \triangleq \left[
	\begin{array}{c|c|c}
	{\bR}_{\delta}^{1}\, &\, {\bR}_{\delta}^{2} & ({\bSigma}_{:,1:r} + {\bR}_{\delta}^{1}){\bA}_1 + {\bR}_{\delta}^{2}A_2
	\end{array}
	\right]
	\]
	and requires to have a norm less than or equal $\delta$, i.e., $\|{\bR}_{\delta}\|\leq \delta$. Since $\rank(\bbW_2 + \tbW_2) = k$, there exist a unitary basis set $\{\tbw_2^1,\ldots,\tbw_2^{k-r}\}$ for $\tbW_2$ such that $\mbox{span}\{\tbw_2^1,\ldots,\tbw_2^{k-r}\} \, \cap \, {\cal C}(\bbW_2) = \emptyset$. Define
	\begin{equation}\label{eq:PertW21}
	\tbW_2^1 \triangleq \dfrac{\epsilon}{n2^{n+1}} \left[ \overbrace{\mathbf{0}}^{k \times r} \, \, \overbrace{\tbw_2^1 \, \ldots , \tbw_2^{k-r}}^{k \times (k-r)} \right],
	\end{equation}
	and let us form the matrix $\bbW_2^1\in \mathbb{R}^{k \times k}$ using the first $k$ columns of $\bbW_2$. Since the last $n-r$ columns of the matrix $\bbW_2$ are zero, $\tbW_2^1 + \bbW_2^1$ is a full rank $k \times k$ matrix and $\bbW_1\tbW_2^1=0$. 
	Let us define
	$
	\bbW_1^0\triangleq \left[\begin{array}{c|c} {\bR}_{\delta}^{1} & {\bR}_{\delta}^{2} \end{array} \right](\tbW_2^1 + \bbW_2^1)^{-1},
	$
	and
	\small
	$
	\bbW_2^0 \triangleq \left[
	\begin{array}{c|c}
	\tbW_2^1 & \big(\bbW_2^1 + \tbW_2^1 \big)_{:,1:r}{\bA}_1 + \big(\bbW_2^1 + \tbW_2^1\big)_{:, r+1:k}{\bA}_2
	\end{array}
	\right].
	$
	\normalsize
	Using this definition, we have
	\small
	\begin{flalign}\label{perm_equ}
	(\bbW_1+\bbW_1^0)(\bbW_2 + \bbW_2^0)&= \left[(\bbW_1+\bbW_1^0)(\bbW_2 + \bbW_2^0)_{:,1:k} \,\, \Big|  \, \, (\bbW_1+\bbW_1^0)(\bbW_2 + \bbW_2^0)_{:,k+1:n}\right]\nonumber \\
	&= \left[(\bbW_1+\bbW_1^0)(\bbW_2^1 + \tbW_2^1) \,\, \Big|  \, \, (\bbW_1+\bbW_1^0)(\bbW_2 + \bbW_2^0)_{:,k+1:n} \right]\nonumber \\
	&= \left[\begin{array}{c|c}\bar{\bSigma}_{:,1:k} + \underbrace{\bbW_1\tbW_2^1}_{=0} + \left[\begin{array}{c|c} {\bR}_{\delta}^{1} & {\bR}_{\delta}^{2} \end{array}\right](\bbW_2^1 + \tbW_2^1)^{-1}(\bbW_2^1 + \tbW_2^1) & \underbrace{\bzero}_{m \times (n-k)}\end{array}  \right]\nonumber \\
	&+ \left[ \begin{array}{c|c}\underbrace{\bzero}_{m \times k} &  
	(\bbW_1+\bbW_1^0)\left[ \left[ 
	\begin{array}{cc}\big(\bbW_2^1 + \tbW_2^1 \big)_{:,1:r} \hspace{0in} & \hspace{0in}  \big(\bbW_2^1 + \tbW_2^1\big)_{:, r+1:k}\end{array}
	\right] \left[
	\begin{array}{c} \bA_1 \\ \bA_2 \end{array} 
	\right] \right] \end{array} \right]\nonumber \\
	&=\bbW_1\bbW_2 + \left[
	\begin{array}{c|c|c}
	{\bR}_{\delta}^{1} & {\bR}_{\delta}^{2}  & ({\bSigma}_{:,1:r} + {\bR}_{\delta}^{1}){\bA}_1 + {\bR}_{\delta}^{2}{\bA}_2\end{array}\right]\nonumber \\
	&=\bbW_1\bbW_2 + {\bR}_{\delta} = \widetilde{\bSigma}. 
	\end{flalign}
	\normalsize
	To complete the proof, it remains to show that for any $\epsilon > 0$, we can choose $\delta$ small enough  such that $\| \bbW_1^0 \| \leq \epsilon$ and $\| \bbW_2^0 \| \leq \epsilon$. In other words, we will show  $\widetilde{\bSigma} \in {\cal M}\Big({\ball}_{\epsilon}\big(\bbW_1\big),{\ball}_{\epsilon}\big(\bbW_2\big)\Big)$. 
	Let $\widetilde{r}$, with $k\geq\widetilde{r}\geq r$, be the rank of $\widetilde{\bSigma}$.  According to Lemma~\ref{boundA} and by possibly permuting the columns, $\widetilde{\bSigma}$ can be expressed as
	$\widetilde{\bSigma} = \left[ \begin{array}{c|c} \widetilde{\bSigma}_1 \,&\, \widetilde{\bSigma}_1\bbA
	\end{array}
	\right],$
	where $\widetilde{\bSigma}_1 \in \mathbb{R}^{m \times \widetilde{r}}$ is full column rank, and $\bbA$ has a bounded norm $\|\bbA \| \leq n2^{n -\widetilde{r}-1}$. Notice that for given $\bbW_1^0$ and $\bbW_2^0$  satisfying (\ref{perm_equ}), permuting the columns of $\widetilde{\bSigma}$ corresponds to permuting the columns of $(\bbW_2+ \bbW_2^0)$.  If we can show that the first $r$ columns are not among the permuted ones, then using the fact that $\bbW_2$ has only its first $r$ columns non-zero, it follows that the permutation of the columns of $\widetilde{\bSigma}$ corresponds to the same permutation of the columns of $\bbW_2^0$. 
	Moreover, if the first $r$ columns are not among the permuted ones, then without loss of generality we can express the perturbed matrix
	\[\widetilde{\bSigma} \, = \,
	\left[
	\begin{array}{c|c|c}
	\, \underbrace{{\bSigma}_{:,1:r} + {\bR}_{\delta}^{1}}_{m \times r} & 
	\underbrace{{\bR}_{\delta}^{2}}_{m \times (k-r)} &
	\underbrace{({{{\Sigma}}_{:,1:r} + {\bR}_{\delta}^{1})\bbA_1 + {\bR}_{\delta}^{2}\bbA_2}}_{m \times (n-k)} \\
	\end{array}
	\right],
	\]
	and the perturbation matrix
	\[\bR_{\delta} \, = \,
	\left[
	\begin{array}{c|c|c}
	\, \underbrace{{\bR}_{\delta}^{1}}_{m \times r} \, & 
	\underbrace{{\bR}_{\delta}^{2}}_{m \times (k-r)} &
	\underbrace{({{{\Sigma}}_{:,1:r} + {\bR}_{\delta}^{1})\bbA_1 + {\bR}_{\delta}^{2}\bbA_2}}_{m \times (n-k)} \\
	\end{array}
	\right],
	\]
	where $\left[\begin{array}{c}
	\bbA_1\\
	\bbA_2
	\end{array}
	\right]=\bbA$ has a bounded norm.
	
	We now show that the first $r$ columns of $\widetilde{\bSigma}$ 
	are not among the permuted columns. Assume the contrary, then there exists at least a column ${\bSigma}_{:,j} + \big({\bR}_{\delta}^{1}\big)_{:,j}$ with $j\leq r$, that is not a column of $\widetilde{\bSigma}_1$ and is thus a column of $\widetilde{{\bSigma}}_1\bbA$. Without loss of generality let ${\bSigma}_{:,j} + \big({\bR}_{\delta}^{1}\big)_{:,j} = \widetilde{\bSigma}_1\bbA_{:,1}$. It follows that 
	${\bSigma}_{j,j} + \big({\bR}_{\delta}^{1}\big)_{j,j} = (\widetilde{\bSigma}_1)_{j,:}\bbA_{:,1}.$
	But since ${\bSigma}_{j,j} + \big({\bR}_{\delta}^{1}\big)_{j,j}$ is a non-zero perturbed singular value, and since elements of $(\widetilde{\bSigma}_1)_{j,:}$ are all of order $\delta$, then by choosing $\delta$ sufficiently small, we get $\|\bbA\| > 2^{n - \widetilde{r}-1}$, which contradicts the bound we have on $\bbA$. 
	
	We now obtain an upper-bound on $\|\bbW_2^0\|$. Since the norm of $\bbA$ is bounded,  the norm of $\bbA_2$ is also bounded by some constant~$K \triangleq  n2^n > n2^{n-\widetilde{r}-1}$. Hence,
	\begin{flalign*}
	\delta & \geq \|\bR_{\delta}\|\geq \|({\bSigma}_{:,1:r} + {\bR}_{\delta}^{1})\bbA_1 + {\bR}_{\delta}^{2}\bbA_2\| \geq \|({\bSigma}_{:,1:r} + {\bR}_{\delta}^{1})\bbA_1\|  - \|{\bR}_{\delta}^{2}\bbA_2\| \\
	& \geq \|({\bSigma}_{:,1:r} + {\bR}_{\delta}^{1})\bbA_1\|  - K\delta \geq \frac{{\sigma}_{\min}}{2}\|\bbA_1\|  - K\delta,
	\end{flalign*}
	where ${\sigma}_{\min}$ is the minimum singular value of the full column rank matrix ${\bSigma}_{:,1:r}$ which is bounded away from zero. Here, we have chosen $\delta<\sigma_{\min}/2$ so that $\|({\bSigma}_{:,1:r} + {\bR}_{\delta}^{1})\bbA_1\| \geq  \dfrac{{\sigma}_{\min}}{2}\|\bbA_1\| $. Rearranging  the terms, we obtain
	$
	\|\bbA_1\| \leq \dfrac{2 (1+K)\delta}{{\sigma}_{\min}}.
	$
	Thus, for some constant $C \triangleq \|\bbW_2^1\|$, we obtain
	\begin{flalign*}
	\|\bbW_2^0\|^2 &\leq \|\tbW_2^1\|^2 + \|\bbW_2^1\|^2 \, \|\bbA_1\|^2 + \|\tbW_2^1\|^2 \,\|\bbA_2 \|^2  \\
	& \leq \dfrac{\epsilon^2}{4n^22^{2n}} + {\delta}^2C^2 \Bigg(\dfrac{2+2K}{{\sigma}_{min}}\Bigg)^2 +\dfrac{\epsilon^2 K^2}{4n^22^{2n}} \\
	& \leq \dfrac{\epsilon^2 }{4K^2} +{\delta}^2 C^2\Bigg(\dfrac{2+2K}{{\sigma}_{min}}\Bigg)^2 + \dfrac{\epsilon^2 K^2}{4K^2}\\  
	&\leq \epsilon^2/2  +{\delta}^2C^2 \Bigg(\dfrac{2+2K}{{\sigma}_{min}}\Bigg)^2,
	\end{flalign*}
	where the first inequality holds by Chauchy Swarz and triangular inequality. Thus, for a given $\epsilon>0$, we can choose 
	\[
	\delta \leq \min\left\{\dfrac{\epsilon}{1+\max\left\{\|(\bbW_2^1+\tbW_2^1)^{-1}\|,\sqrt{2} \, C \Bigg(\dfrac{2+2K}{{\sigma}_{min}}\Bigg)\right\}}, \sigma_{\min}/2\right\}.
	\]
	This choice of $\delta$ leads to  $\|\bbW_2^0\| \leq \epsilon$. Moreover,
	\begin{align}
	\|\bbW_1^0\| &\leq \|{\bR}_{\delta}\|\,\|(\bW_2^1 + \tbW_2^1)^{-1}\| \leq {\delta} \|(\bW_2^1 + \tbW_2^1)^{-1}\|
	\leq \dfrac{\epsilon \|(\bW_2^1 + \tbW_2^1)^{-1}\|}{1+\|(\bW_2^1 + \tbW_2^1)^{-1}\|} 
	\leq \epsilon, \nonumber
	\end{align}
	which completes the proof.
	\end{proof}

We now use Proposition \ref{prop:Non_Sym_case}, Lemma \ref{lm:tri_eq}, and  Lemma \ref{lm:Y_2=0} to complete the proof of Theorem~\ref{thm:Loc_Open}:

\begin{proof}
	First of all, if $\mathcal{M}(\cdot, \cdot)$ is locally open at $(\bbW_1,\bbW_2)$,  according to Proposition~\ref{prop:Non_Sym_case}, the conditions $i)$ and $ii)$ must hold; and hence $\rank(\bbW_1) = \rank(\bbW_2)$ due to Lemma~\ref{lm:Y_2=0}. Thus, $\mathcal{M}(\cdot,\cdot)$ cannot be locally open if $\rank(\bbW_1) \neq \rank(\bbW_2)$.  On the other hand, when $\rank(\bbW_1)= \rank(\bbW_2)$, the conditions $i)$, $ii)$, $iii)$, and $iv)$ are equivalent due to Lemma~\ref{lm:tri_eq}. Moreover, these conditions imply local openness according to Proposition~\ref{prop:Non_Sym_case}. 
	\end{proof}

\section{Proof of Theorem \ref{thm:Loc_Open_general}}\label{PSD-local-open-app}
Before proceeding with the proof of Theorem~\ref{thm:Loc_Open_general}, we recall the definition of the symmetric matrix multiplication mapping
$
\mathcal{M}_{+}: \mathbb{R}^{n \times k} \mapsto \mathcal{R}_{\mathcal{M}_+}  \quad \mbox{with} \quad \mathcal{M}_{+}(\bW) \triangleq \bW\bW^{\top}.$
where $ \mathcal{R}_{\mathcal{M}_+} \triangleq  \{\, \bZ \in \mathbb{R}^{n \times n} \,\, | \, \, \bZ\succeq 0, \, \, \mbox{rank}(\bZ) \, \leq \, k \}$. In this section, we show that $\mathcal{M}_{+}$ is open in $\mathcal{R}_{{\cal M}_+}$. Particularly, we show that given a matrix $\bW \in \mathbb{R}^{ n\times k}$ and a small perturbation $\widetilde{\bZ} \, \in \, \mathcal{R}_{{\cal M}_{+}}$ of $\bZ \triangleq \bW\bW^{\top}$, there exists a small perturbation $\tbW$ of $\bW$ such that $\widetilde{\bZ}\,=\,\tbW\tbW^{\top}$. Similar to the previous proof scheme, we first show that local openness of ${\cal M}_{+}(\cdot)$ at $\bW$ is equivalent to local openness of ${\cal M}_{+}(\cdot)$ at $\bU^{\top}\bW$ where $\bU^{\top}\bSigma\bU$ is a symmetric singular value decomposition of the product $\bW\bW^{\top}$.

\begin{lemma}\label{Sym_lm:SVD}
	Consider $\bW \in \mathbb{R}^{n\times k}$ and assume that $\bW \bW^{\top} =\bU\bSigma\bU^{\top}$ is a symmetric singular value decomposition of the matrix product $\bW\bW^{\top}$  with $\bU \in \mathbb{R}^{n \times n}$, and $\bSigma \in \mathbb{R}^{n \times n}$ . Then,
	$\mathcal{M}_{+}(\cdot)$ is locally open at $ \bW$  if and only if $  \mathcal{M}_{+}(\cdot)$  is locally open at $ \bU^{\top}\bW.$
\end{lemma}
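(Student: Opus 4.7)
The plan is to exploit the fact that conjugation by the orthogonal matrix $\bb{U}$ is an isometry, both on the domain $\mathbb{R}^{n\times k}$ (via left-multiplication) and on the range $\mathcal{R}_{\mathcal{M}_+}$ (via two-sided conjugation), and that $\mathcal{M}_+$ intertwines these two actions. Concretely, for any $\bb{W}'\in\mathbb{R}^{n\times k}$,
\[
\mathcal{M}_+(\bb{U}^T\bb{W}') \;=\; \bb{U}^T\bb{W}'\bb{W}'^T\bb{U} \;=\; \bb{U}^T\,\mathcal{M}_+(\bb{W}')\,\bb{U}.
\]
In particular, $\mathcal{M}_+(\bb{U}^T\bb{W}) = \bb{U}^T\bb{W}\bb{W}^T\bb{U} = \bb{U}^T(\bb{U}\bb{\Sigma}\bb{U}^T)\bb{U} = \bb{\Sigma}$. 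This algebraic identity, together with unitary invariance of the Frobenius norm, will carry both directions of the equivalence.

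The first step is to verify that the maps $\phi:\bb{V}\mapsto \bb{U}^T\bb{V}$ and $\psi:\bb{Z}\mapsto \bb{U}^T\bb{Z}\bb{U}$ are linear isometries: $\|\phi(\bb{V})\|=\|\bb{V}\|$ and $\|\psi(\bb{Z})\|=\|\bb{Z}\|$ for the Frobenius norm. Moreover, $\psi$ is a bijection of $\mathcal{R}_{\mathcal{M}_+}$ onto itself, since $\bb{Z}\succeq 0$ and $\rank(\bb{Z})\le k$ are both preserved by symmetric conjugation by an orthogonal matrix. Hence $\phi$ maps $\mathcal{B}_\epsilon(\bb{W})$ onto $\mathcal{B}_\epsilon(\bb{U}^T\bb{W})$, and $\psi$ maps $\mathcal{B}_\delta(\bb{W}\bb{W}^T)\cap\mathcal{R}_{\mathcal{M}_+}$ onto $\mathcal{B}_\delta(\mathcal{M}_+(\bb{U}^T\bb{W}))\cap\mathcal{R}_{\mathcal{M}_+}$.

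For the ``only if'' direction, suppose $\mathcal{M}_+$ is locally open at $\bb{W}$. Given $\epsilon>0$, pick $\delta>0$ such that $\mathcal{B}_\delta(\bb{W}\bb{W}^T)\cap\mathcal{R}_{\mathcal{M}_+}\subseteq \mathcal{M}_+(\mathcal{B}_\epsilon(\bb{W}))$. Take any $\widetilde{\bb{Z}}\in \mathcal{B}_\delta(\mathcal{M}_+(\bb{U}^T\bb{W}))\cap\mathcal{R}_{\mathcal{M}_+}$; then $\psi^{-1}(\widetilde{\bb{Z}})=\bb{U}\widetilde{\bb{Z}}\bb{U}^T$ lies in $\mathcal{B}_\delta(\bb{W}\bb{W}^T)\cap\mathcal{R}_{\mathcal{M}_+}$ by the isometry property of $\psi$ and the identity above. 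By hypothesis there exists $\widetilde{\bb{W}}\in\mathcal{B}_\epsilon(\bb{W})$ with $\widetilde{\bb{W}}\widetilde{\bb{W}}^T=\bb{U}\widetilde{\bb{Z}}\bb{U}^T$. Setting $\widetilde{\bb{V}}\triangleq \bb{U}^T\widetilde{\bb{W}} = \phi(\widetilde{\bb{W}})$, we get $\widetilde{\bb{V}}\widetilde{\bb{V}}^T=\bb{U}^T\widetilde{\bb{W}}\widetilde{\bb{W}}^T\bb{U}=\widetilde{\bb{Z}}$ and $\|\widetilde{\bb{V}}-\bb{U}^T\bb{W}\|=\|\widetilde{\bb{W}}-\bb{W}\|<\epsilon$, which proves local openness of $\mathcal{M}_+$ at $\bb{U}^T\bb{W}$. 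The ``if'' direction is completely symmetric, with the roles of $\bb{W}$ and $\bb{U}^T\bb{W}$ swapped, using that $\bb{U}$ is orthogonal so $\phi^{-1}(\bb{V}) = \bb{U}\bb{V}$ is also an isometry.

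There is really no obstacle here; the statement is almost purely a change-of-variables observation, and the only thing to be careful about is the bookkeeping showing that perturbations stay inside the range $\mathcal{R}_{\mathcal{M}_+}$ (positive semidefiniteness and the rank bound), which is exactly what $\psi$ preserves.
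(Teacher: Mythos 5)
Your proof is correct and is exactly the change-of-variables argument the paper has in mind: the authors omit the proof, stating it is ``a direct consequence of the definition of local openness,'' and your isometry/conjugation bookkeeping (including the check that $\psi$ preserves $\mathcal{R}_{\mathcal{M}_+}$) fills in that omitted routine verification faithfully.
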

The proof of this lemma is a direct consequence of local openness definition.

According to Lemma \ref{Sym_lm:SVD}, proving local openness of $\mathcal{M}_{+}(\cdot)$ at $\bW$ is equivalent to proving local openness of $\mathcal{M}_{+}(\cdot)$ at $\bU^{\top}\bW$. To ease the notation, denote $\bU^{\top}\bW$ by $\bbW$. Notice that when $\bbW \in \mathbb{R}^{n \times n}$ is a full rank square matrix,  for any symmetric perturbation  $\bR_{\delta}$ with $\|\bR_{\delta}\| \leq \delta$ sufficiently small, $\widetilde{\bSigma}=\bbW\bbW^{\top} + \bR_{\delta}$ is a full rank symmetric positive definite matrix. Then finding a perturbation $\bbW + \bA_{\epsilon}$ of $\bbW$ such that $(\bbW + \bA_{\epsilon})(\bbW + \bA_{\epsilon})^{\top} = \widetilde{\bSigma}$ is equivalent to solving the  matrix equation 
$\bA_{\epsilon}\bbW^{\top} + \bbW\bA_{\epsilon}^{\top} + \bA_{\epsilon}\bA_{\epsilon}^{\top} = \bR_{\delta}.$ Substituting $\bA_{\epsilon}=\bP(\bbW^{-1})^{\top}$ for some matrix $\bP \in \mathbb{R}^{n \times n}$, we obtain the following quadratic matrix  equation
\begin{flalign}\label{P_eqn}
\bP + \bP^{\top} + \bP{\bSigma}^{-1}\bP^{\top} = \bR_{\delta},
\end{flalign}
where $\bSigma = \bbW \bbW^{\top}$.
In the next Lemma, we show how to find a solution matrix $\bP$ with $\|\bP\|={\cal O}(\delta)$ that satisfies (\ref{P_eqn}); thus proving local openness of $\mathcal{M}_{+}(\cdot)$ at any full rank square matrix $\bbW$.

\begin{lemma}\label{P_matrix}
	Let $\bSigma\in \mathbb{R}^{n\times n}$ be a full rank diagonal positive definite matrix. There exists $\delta_0>0$ such that for any positive $\delta<\delta_0$ and any symmetric matrix $\bR \in \mathbb{R}^{n \times n}$ with $\|\bR\|_{\infty} \leq \delta$, there exists an upper-triangular matrix $\bP \in \mathbb{R}^{n \times n}$ with $\|\bP\|_{\infty} \leq 3\delta$ satisfying the  equation
	$
	\bP+\bP^{\top} + \bP\bSigma^{-1}\bP^{\top}=\bR.
	$
\end{lemma}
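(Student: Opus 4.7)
The plan is to exploit the upper-triangular structure of $\bb{P}$ together with the diagonality of $\bb{\Sigma}$ to reduce the matrix equation to a sequence of scalar equations that can be solved one at a time. Both sides of the equation are symmetric, so we really have $n(n+1)/2$ independent scalar equations, which matches exactly the $n(n+1)/2$ free entries of an upper-triangular $\bb{P}$. Since $\bb{\Sigma}^{-1}$ is diagonal, the $(i,j)$-entry of $\bb{P}\bb{\Sigma}^{-1}\bb{P}^T$ equals $\sum_{k\geq \max(i,j)}\sigma_k^{-1}P_{ik}P_{jk}$, which only involves entries of $\bb{P}$ in rows and columns indexed $\geq \max(i,j)$.

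I would then process the unknowns $\{P_{ij}:i\leq j\}$ in the order: rows from $i=n$ down to $i=1$, and within each row from $j=n$ down to $j=i$. When computing $P_{ij}$ with $i<j$, all entries $P_{jk}$ (with $k\geq j$) of the already-completed row $j$ and all entries $P_{ik}$ (with $k>j$) of the current row are known, so the $(i,j)$-component of the equation becomes the linear equation
\begin{equation*}
P_{ij}\bigl(1+\sigma_j^{-1}P_{jj}\bigr)=R_{ij}-\sum_{k>j}\sigma_k^{-1}P_{ik}P_{jk}.
\end{equation*}
For a diagonal entry $P_{ii}$, after the off-diagonals in row $i$ are determined, the $(i,i)$-component becomes the scalar quadratic
\begin{equation*}
\sigma_i^{-1}P_{ii}^{\,2}+2P_{ii}=R_{ii}-\sum_{k>i}\sigma_k^{-1}P_{ik}^{\,2},
\end{equation*}
which I would solve explicitly as $P_{ii}=\sigma_i\bigl(\sqrt{1+\sigma_i^{-1}\beta_i}-1\bigr)$ with $\beta_i$ denoting the right-hand side.

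To get the entrywise bound $\|\bb{P}\|_\infty\leq 3\delta$, I would proceed by strong induction on the processing order, maintaining the invariant that every already-computed entry has magnitude at most $3\delta$. Using the elementary inequality $|\sqrt{1+x}-1|\leq|x|$ valid for $x\geq -1$ (easily shown by rationalizing), one obtains $|P_{ii}|\leq|\beta_i|\leq \delta + n(3\delta)^2/\sigma_{\min}$; and for the linear case, $|P_{ij}|\leq (\delta + n(3\delta)^2/\sigma_{\min})/(1-3\delta/\sigma_{\min})$, where $\sigma_{\min}\triangleq\min_i\sigma_i>0$. A computation shows that choosing $\delta_0\leq 2\sigma_{\min}/\bigl(9(n+1)\bigr)$ makes both quantities bounded by $3\delta$, keeps the linear denominator $1+\sigma_j^{-1}P_{jj}$ bounded away from zero, and keeps the radicand $1+\sigma_i^{-1}\beta_i$ strictly positive, thereby closing the induction.

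The main obstacle is really bookkeeping: propagating the inductive hypothesis so that the $O(\delta^2)$ cross-terms accumulated along the recursion do not overwhelm the linear $\delta$ term, and doing so uniformly over all $n(n+1)/2$ entries. A looser bound of the form $\|\bb{P}\|_\infty\leq C\delta$ for some unspecified constant $C$ would follow more abstractly from an inverse-function-theorem argument for the smooth map $\bb{P}\mapsto\bb{P}+\bb{P}^T+\bb{P}\bb{\Sigma}^{-1}\bb{P}^T$ restricted to upper-triangular matrices, since its derivative at $\bb{P}=\bb{0}$ is the linear isomorphism $\bb{P}\mapsto\bb{P}+\bb{P}^T$ from upper-triangular matrices onto symmetric matrices; the explicit constant $3$ in the statement, however, seems to require the hands-on recursion above.
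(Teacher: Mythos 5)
Your proposal is correct and follows essentially the same route as the paper: exploit the diagonal $\bb{\Sigma}$ and upper-triangular $\bb{P}$ to decouple the matrix equation into $n(n+1)/2$ scalar equations, solve them greedily in a dependency-respecting order (diagonal entries by an explicit square root, off-diagonals by a linear solve with denominator $1+\sigma_j^{-1}P_{jj}$ near $1$), and induct to keep every entry of size $O(\delta)$. The only differences are cosmetic — you sweep row-by-row rather than column-by-column and carry explicit constants (yielding a concrete $\delta_0$) where the paper writes $2\delta+\mathcal{O}(\delta^2)$.
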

Before proving this lemma, let us emphasize that the value of $\delta_0$ depends on $\bSigma$, but is independent of the choice of $\bR$. 
\begin{proof}
	Let us start by simplifying the equation of interest. For all $i =1 , \ldots,n$, let $s_i={\bSigma}_{ii}^{-1}$, which is positive by the positive definiteness of ${\bSigma}$. Then,
	\[\arraycolsep=1pt\def\arraystretch{1.4}
	\begin{array}{ll}
	\bP+\bP^{\top}+\bP{\bSigma}^{-1}\bP^{\top} = \bR &\Leftrightarrow
	\begin{cases} 
	2\bP_{ii} \, + \,\displaystyle{\sum_{l}}\,s_l\bP_{il}^2 \,=\, \bR_{ii}\, \quad \forall \, i\\ 
	\bP_{ij} + \bP_{ji} + \displaystyle{\sum_{l}}s_l\bP_{il}\bP_{jl}=\bR_{ij}\, \quad \forall \, i<j\\
	\end{cases}\\
	&\Leftrightarrow
	\begin{cases} 
	\big(s_i\bP_{ii}\, + \, 1\big)^2 \, + \, \displaystyle{\sum_{l \neq i}}\,s_is_l\bP_{il}^2 \,=\, s_i\bR_{ii}+1 \, \quad \forall \, i\\ 
	\bP_{ij}\big(\, s_j\bP_{jj}\,+\,1\big) + \bP_{ji}\big(s_i\bP_{ii} \, + \, 1 \big) + \displaystyle{\sum_{l \neq i,j}}\,s_l\bP_{il}\bP_{jl}=\bR_{ij}\, \quad \forall \, i<j
	\end{cases}\\
	&\Leftrightarrow
	\begin{cases} 
	\bP_{ii} = \dfrac{1}{s_i} \left( \pm\displaystyle{\sqrt{s_i\bR_{ii} + 1 -\displaystyle{\sum_{l \neq i}}\,s_is_l\bP_{il}^2}} -1 \right)\\
	\bP_{ij}\big(\, s_j\bP_{jj}\,+\,1\big) + \bP_{ji}\big(s_i\bP_{ii} \, + \, 1 \big) + \displaystyle{\sum_{l \neq i,j}}\,s_l\bP_{il}\bP_{jl}=\bR_{ij}\, \quad \forall \, i<j.
	\end{cases}
	\end{array}\]
	
	An upper-triangular solution $\bP$ can be generated using the following pseudo-code:
	
	\begin{algorithm}[H]\label{alg-P}
		\caption{Pseudo-code for generating matrix $\bP$}\label{P_Gen_Code}
		\begin{algorithmic}[1]
			\State For all $(i,j)$ with $i>j$, set $\bP_{ij}=0$.
			\For{$j=n \rightarrow 1$}
			\small \begin{equation}\label{alg_eqn_1}
			\bP_{jj} = \dfrac{1}{s_j} \Bigg| \displaystyle{\sqrt{s_j\bR_{jj} + 1 -\displaystyle{\sum_{l > j}}\,s_js_l\bP_{jl}^2}} -1 \Bigg|
			\end{equation}\normalsize
			\For{$i=j-1 \rightarrow 1$}
			\small \begin{equation}\label{alg_eqn_2}
			\bP_{ij}=\dfrac{\bR_{ij} - \sum_{l> j}s_{l}\bP_{il}\bP_{jl}}{s_{j}\bP_{jj} +1}
			\end{equation} \normalsize
			\EndFor
			\EndFor
		\end{algorithmic}
	\end{algorithm}
	Notice that at each iteration of the algorithm corresponding to the $(i,j)$-th index, the corresponding equation is satisfied. Moreover, once an equation is satisfied, the variables in that equation are not going to change anymore; and thus it remains satisfied.  
	We proceed by showing that Algorithm (\ref{P_Gen_Code}) generates a matrix $\bP$ with $\|\bP\|\leq 3\delta $ for $\delta$ small enough. In particular, we show that for sufficiently small $\delta>0$, $|\bP_{ij}| \leq 2 \delta + {\cal O}({\delta}^2)$ for all $i\leq j$. We prove our result by a reverse induction on $j$:\\
	
	\noindent\underline{Base step, $j=n$ (last column of $\bP$):} Using (\ref{alg_eqn_1}),
	\[|\bP_{nn}|=\dfrac{1}{s_{n}}\big|\sqrt{s_{n}\bR_{nn} +1} -1 \big|\leq \dfrac{1}{s_{n}}\left(s_{n}|\bR_{nn}| +1 -1 \right)=|\bR_{nn}| \leq \delta.\]
	Moreover, \eqref{alg_eqn_2}) implies
	$
	|\bP_{in}|=\dfrac{|\bR_{in}|}{|s_{n}\bP_{nn} +1|}.
	$
	For sufficiently small $\delta, \, \, |s_{n}\bP_{nn} + 1| \geq \dfrac{1}{2}.$ It follows that  $ |\bP_{in}| \leq  2|\bR_{in}| \leq 2\delta.$\\
	
	\noindent\underline{Induction hypothesis:} Assume $|\bP_{ij}| \leq 2 \delta + {\cal O}({\delta}^2)$ for all $i\leq j$, $j = n, \ldots, k $. We show that the result holds for $k-1$. First of all,   (\ref{alg_eqn_1}) implies 
	\small
	\begin{flalign*}
	& |\bP_{(k-1)(k-1)}| = \dfrac{1}{s_{k-1}}\Bigg|\sqrt{s_{k-1}\bR_{(k-1)(k-1)} +1 - \sum_{l > k-1} s_{k-1}s_{l}\bP_{(k-1)l}^2} -1\Bigg| \\
	\leq &  \dfrac{1}{s_{k-1}}\Bigg| s_{k-1}|\bR_{(k-1)(k-1)}| +1 + \big|\sum_{l > k-1} s_{k-1}s_{l}\bP_{(k-1)l}^2 \big| -1\Bigg| \leq |\bR_{(k-1)(k-1)}| + {\cal O}({\delta}^2).
	\end{flalign*}
	\normalsize
	Also,
	$
	|\bP_{i(k-1)}|=\dfrac{\bigg|\bR_{i(k-1)} - \sum_{l>k-1}s_{l}\bP_{il}\bP_{(k-1)l}\bigg|}{\bigg|s_{k-1}\bP_{(k-1)(k-1)} +1\bigg|}, 
	$
	which implies
	\[ 
	|\bP_{i(k-1)}|\, \leq \, \dfrac{|\bR_{i(k-1)}| + |\sum_{l>k-1}s_{l}4{\delta}^2 + {\cal O}({\delta}^3)|}{|s_{k-1}\bP_{(k-1)(k-1)} +1|}.  
	\]
	Thus, for sufficiently small $\delta$, we have $|s_{k-1}\bP_{(k-1)(k-1)} + 1 |\geq \dfrac{1}{2}.$ Consequently, $|\bP_{i(k-1)}| \leq 2|\bR_{i(k-1)}| + {\cal O}({\delta}^2)  \leq 2\delta + {\cal O}({\delta}^2).$ 
	\end{proof}

Using the above two lemmas we complete the proof of Theorem~\ref{thm:Loc_Open_general}, i.e showing local openness of ${\cal M}_{+}$  at non-square matrices $\bbW$.

\begin{proof}\bf{Proof for the Local Openness of ${\cal M}_{+}$  at non-square matrices $\bbW$.\\}

	To show the openness of the mapping, it suffices to show that it is locally open everywhere. Consider an arbitrary point $\bW \in \mathbb{R}^{n \times k}$, and let $\bU\bSigma\bU^{\top}$ be a singular value decomposition of the symmetric matrix product $\bW\bW^{\top}$. To ease the notation, denote $\bU^{\top}\bW$ by $\bbW$. By Lemma \ref{Sym_lm:SVD}, $\mathcal{M}_{+}(\cdot)$ is locally open at $\bW$ if and only if $\mathcal{M}_{+}(\cdot)$ is locally open at $\bbW$.
	When $\bW\bW^{\top}$ is rank deficient, we can write
	$
	\bSigma=\bbW\bbW^{\top} = 
	\left[
	\begin{array}{c  c} 
	\, \bSigma_1 & \bzero\\ 
	\bzero & \bzero 
	\end{array} 
	\right],$
	where $\bSigma_1 \in \mathbb{R}^{r \times r}$ is a positive definite diagonal matrix and $r$ is the rank of $\bW\bW^{\top}$. It is easy to show that the last $n-r$ rows of $\bbW$ are all zeros, i.e., for all $j > r$, 
	$
	\langle \bbW_{j,:},\big(\bbW_{:,j}\big)^{\top} \rangle = \|\bbW_{j,:}\|^2 =0,$ or equivalently, $ \bbW_{j,:}=\mathbf{0}.$
	To show local openness of ${\cal M}_{+}(\cdot)$ at $\bbW$, we consider a perturbation $\widetilde{\bSigma} \triangleq \bSigma + \bR_{\delta}$ of $\bSigma$ in the range ${\cal R}_{{\cal M}_{+}}$, and show that there exists a small perturbation $\bbW + \bA_{\epsilon}$ of $\bbW$ such that $(\bbW + \bA_{\epsilon})(\bbW + \bA_{\epsilon})^{\top} = \widetilde{\bSigma}$. By possibly permuting the columns of $\widetilde{\bSigma}$, the perturbed matrix which we know is symmetric positive semi-definite with rank at most $k$ can be expressed as
	
	\footnotesize
	\[
	\begin{array}{c c}
	& \begin{array}{c c c}  \hspace{-0.3in} r \mbox{ columns}  \hspace{-0.3in}   &   \hspace{0.5in}   k-r\mbox{ columns}   \hspace{0.5in}  &   \hspace{0.5in}   n-k \mbox{ columns} \end{array} \\
	\\
	\begin{array}{c} r\mbox{ rows} \\ \vspace{0.1in}\\ k-r\mbox{ rows}  \\ \vspace{0.25in}\\n-k\mbox{ rows}  \\ \end{array}
	& \left[\begin{array}{c  c  c}
	{{\bSigma}}_1 + \bR_{1} \hspace{0in} &  \hspace{0.in} \bR_2 \hspace{0.in} &  \hspace{0.in}  \left[\begin{array}{c | c} \bSigma_1 + \bR_{1} \hspace{0.in} & \hspace{0.in} \bR_2\\ \end{array} \right]\bB	\\ 
	\vspace{0.1in}\\
	\bR_2^{\top} \hspace{0.in} &  \hspace{0.in}\bR_3 \hspace{0.in} & \hspace{0.in} \left[\begin{array}{c | c} \bR_2^{\top} \hspace{0.1in} & \hspace{0.1in} \bR_3 \end{array} \right]\bB
	\\ 
	\vspace{0.1in}\\
	\bB^{\top} \left[\begin{array}{c} \bSigma_1 + \bR_1^{\top} \\\\ \bR_2^{\top} \end{array} \right] \hspace{0.1in} \hspace{0.3in} & \hspace{0.in}  \bB^{\top} \left[\begin{array}{c} \bR_2 \\\\ \bR_3^{\top} \end{array} \right]\hspace{0.in}  \hspace{0.in} & \hspace{0.in} \bB^{\top} \left[\begin{array}{c  c} \bSigma_1 + \bR_1^{\top} \hspace{0.2in}  & \hspace{0.2in} \bR_2 \\ \\ \bR_2^{\top} \hspace{0.2in} & \hspace{0.2in} \bR_3^{\top} \end{array} \right]\bB
	\end{array}
	\right]
	\end{array}.
	\]
	\normalsize
	Let 
	\small
	\[
	\bbR_3=
	\left[
	\begin{array}{c c} \bR_3 \hspace{0.3in} & \hspace{0.3in} \left[\begin{array}{c | c} \bR_2^{\top} \hspace{0.1in} & \hspace{0.1in}\bR_3 \end{array} \right]\bB\\
	\vspace{0.1in}\\
	\bB^{\top} \left[\begin{array}{c} \bR_2 \\\\ \bR_3^{\top} \end{array} \right]\hspace{0.1in}  \hspace{0.3in} & \hspace{0.3in}\bB^{\top} \left[\begin{array}{c  c} \bSigma_1 + \bR_1^{\top} \hspace{0.2in}  & \hspace{0.2in} \bR_2 \\ \\ \bR_2^{\top} \hspace{0.2in} & \hspace{0.2in} \bR_3^{\top} \end{array} \right]\bB
	\end{array}
	\right],
	\]
	\normalsize
	and
	$
	\bbR_2= \left[\begin{array}{c  c} \bR_2  & \hspace{0.3in} \left[\begin{array}{c|c} \bSigma_1+\bR_1 &  \, \, \,\bR_2 \end{array} \right]\bB \end{array}
	\right].
	$
	
	Here $\bB \in \mathbb{R}^{k \times (n-k)}$ exists since $\rank(\widetilde{\bSigma}) \leq k$. Moreover, $\widetilde{\bSigma} \succeq 0$ for small enough perturbation.  Therefore, the Schur complement theorem implies   $\bbR_3 \succeq \bbR_2^{\top}(\bSigma_1 + \bR_1)^{-1}\bbR_2$. Thus $\widetilde{\bSigma} \in \mathcal{R}_{\mathcal{M}_{+}}$ requires $\bR_{1}$ to be a symmetric $\mathbb{R}^{r \times r}$ matrix, $\bbR_2$ to be an $\mathbb{R}^{r \times {n-r}}$ matrix, and $\bbR_3$ to be a symmetric $\mathbb{R}^{(n-r) \times (n-r)}$ matrix with $\bbR_3 \succeq \bbR_2^{\top}(\bSigma_1 + \bR_1)^{-1}\bbR_2$. For every small perturbation $\bR_{\delta} = \left[\begin{array}{cc}
	\bR_1 & \bbR_2\\
	\bbR_2^{\top} & \bbR_3\\
	\end{array} \right]$, with $\|\bR_{\delta} \| \leq \delta$, we need to find $\bA_{\epsilon} \in \mathbb{R}^{n \times k}$ such that 
	\begin{equation}\label{Quad_Matrix_Eqn}
	(\bbW + \bA_{\epsilon} )(\bbW + \bA_{\epsilon})^{\top} = \widetilde{\bSigma}\mbox{ or equivalently } \bbW\bA_{\epsilon}^{\top} + \bA_{\epsilon}\bbW^{\top} + \bA_{\epsilon}\bA_{\epsilon}^{\top} = \bR_{\delta}.
	\end{equation}
	Since the last $n-r$ rows of $\bbW$ are all zeros, we obtain
	\small
	\[\bbW\bA_{\epsilon}^{\top} = \left[\begin{array}{c} \bbW_1\\ \bzero \end{array} \right]\left[\begin{array}{c c} (\bA_{\epsilon}^1)^{\top} & (\bA_{\epsilon}^2)^{\top} \end{array} \right]=\left[\begin{array}{c c} \bbW_1(\bA_{\epsilon}^1)^{\top}  & \bbW_1(\bA_{\epsilon}^2)^{\top} \\ \bzero & \bzero \end{array} \right], \quad \mbox{and}\]
	\normalsize
	\[\bA_{\epsilon}\bA_{\epsilon}^{\top} = \left[\begin{array}{c} \bA_{\epsilon}^1 \\ \bA_{\epsilon}^2 \end{array} \right]\left[\begin{array}{c c} (\bA_{\epsilon}^1)^{\top} & (\bA_{\epsilon}^2)^{\top} \end{array} \right]=\left[\begin{array}{c c} \bA_{\epsilon}^1(\bA_{\epsilon}^1)^{\top}  & \bA_{\epsilon}^1(\bA_{\epsilon}^2)^{\top} \\ \bA_{\epsilon}^2(\bA_{\epsilon}^1)^{\top}  & \bA_{\epsilon}^2(\bA_{\epsilon}^2)^{\top} \end{array} \right]. \qquad\]
	where $\bbW_1= \big(\bbW\big)_{1:r,:} \in \mathbb{R}^{r \times k}$ is a full row rank matrix, $\bA_{\epsilon}^1 \in \mathbb{R}^{r \times k}$, and $\bA_{\epsilon}^2 \in \mathbb{R}^{(n-r) \times k}$.
	From Equation (\ref{Quad_Matrix_Eqn}), we get the following three expressions:
	\begin{flalign}
	&\bbW_1(\bA_{\epsilon}^1)^{\top} \, +\, \bA_{\epsilon}^1\bbW_1^{\top} \, + \,  \bA_{\epsilon}^1(\bA_{\epsilon}^1)^{\top} \, =\,  \bR_1,  \label{ex1} \\ 
	&\bbW_1(\bA_{\epsilon}^2)^{\top} \, +\,  \bA_{\epsilon}^1(\bA_{\epsilon}^2)^{\top} \,=\, \bbR_2, \label{ex2}  \\ 
	&\bA_{\epsilon}^2(\bA_{\epsilon}^2)^{\top} \, = \, \bbR_3.   \label{ex3}
	\end{flalign}
	Setting $\bA_{\epsilon}^1 \triangleq \bP(\bbW_1^{\dagger})^{\top}$, where $(\bbW_1)^{\dagger}\triangleq  \bbW_1^{\top}(\bbW_1\bbW_1^{\top})^{-1}$, we obtain 
	\small
	\begin{flalign*}
	&\bbW_1(\bA_{\epsilon}^1)^{\top} + \bA_{\epsilon}^1\bbW_1^{\top} + \bA_{\epsilon}^1(\bA_{\epsilon}^1)^{\top} \\
	=\; & \bbW_1\bbW_1^{\top}\bSigma_1^{-1}\bP^{\top} \, + \, \bP\bSigma_1^{-1}\bbW_1\bbW_1^{\top} \, + \,  \bP\bSigma_1^{-1}\bbW_1\bbW_1^{\top}\bSigma_1^{-1}\bP^{\top}= \bP^{\top}+\bP+\bP\bSigma_1^{-1}\bP^{\top}.
	\end{flalign*}
	\normalsize
	Using Lemma \ref{P_matrix}, we can choose $\delta$ small enough so that for any perturbation matrix $\bR$ with $\|\bR\| < \delta$, there exists a solution $\bP$ with $\|\bP\|= O(\delta)$. More precisely, we can generate $\bP \in \mathbb{R}^{r \times r}$ that satisfies expression (\ref{ex1}), with $\|\bP\|_{\infty} \leq 3\delta$. Also, since $(\bbW_1^{\dagger})^{\top}\bbW_1^{\dagger} = {\bSigma}_1^{-1}$, we obtain 
	$
	\|\big(\bbW_1^{\dagger}\big)_{:,j}\|^2  \leq \dfrac{1}{{\sigma_{min}}} \quad \forall \, j \leq r, 
	$
	where ${\sigma}_{min}$ is the minimum singular value for $\bSigma_1$. Then by definition of $\bA_{\epsilon}^1$, we can bound its norm: 
	\begin{equation}\label{boundAdelta1}
	\|\bA_{\epsilon}^1\| \leq \|\bbW_1^{\dagger}\| \|\bP\| \leq   \dfrac{\sqrt{r}}{\sqrt{{\sigma}_{min}}}3r^2\delta =  \dfrac{3r^{2.5}\delta}{\sqrt{{\sigma}_{min}}}.
	\end{equation}
	Note that $\|\bA_{\epsilon}^1\|$ is of order $\delta$ which can be chosen arbitrarily small so that $\bbW_1+ \bA_{\epsilon}^1$ is full row rank. Define
	\[(\bA_{\epsilon}^2)^{\top} \triangleq (\bbW_1+ \bA_{\epsilon}^1)^{\dagger}\bbR_2 \,+\, \bM , \]
	where
	$
	\bM \subset \{\bM \in \mathbb{R}^{ k \times (n-r)}\, \big| \, \|\bM\| \leq \delta, \, \, \mathcal{C}(\bM) \subset \mathcal{N}\big(\, \bbW_1+ \bA_{\epsilon}^1 \, \big) \},
	$
	and
	\[ (\bbW_1\, +\,  \bA_{\epsilon}^1)^{\dagger} \,\triangleq \, (\bbW_1\, +\,  \bA_{\epsilon}^1)^{\top}[(\bbW_1\, +\,  \bA_{\epsilon}^1)(\bbW_1\, +\,  \bA_{\epsilon}^1)^{\top}]^{-1} \, = \, (\bbW_1\, +\,  \bA_{\epsilon}^1)^{\top}(\bSigma_1 + \bR_1)^{-1}  \]
	with the last equality obtained using (\ref{ex1}).
	Substituting  $\bA_{\epsilon}^2$ in  (\ref{ex2}), we obtain
	\[ (\bbW_1\, +\,  \bA_{\epsilon}^1)(\bA_{\epsilon}^2)^{\top} \,=\, (\bbW_1\, +\,  \bA_{\epsilon}^1)(\bbW_1+ \bA_{\epsilon}^1)^{\dagger}\bbR_2 \,+\, (\bbW_1\, +\,  \bA_{\epsilon}^1)\bM =\bbR_2.\]
	where the last equality is valid since $\mathcal{C}(\bM) \subset \mathcal{N}(\bbW_1+ \bA_{\epsilon}^1)$.
	Substituting $\bA_{\epsilon}^2$ in (\ref{ex3}), we obtain
	\begin{flalign*}
	\bA_{\epsilon}^2(\bA_{\epsilon}^2)^{\top} & = \bar{{\bR}}_2^{\top} (\bSigma_1 + \bR_1)^{-1}(\bSigma_1 + \bR_1)(\bSigma_1 + \bR_1)^{-1} \bar{{\bR}}_2 + \bM^{\top}(\bbW_1\, +\,  \bA_{\epsilon}^1)^{\top}(\bSigma_1 + \bR_1)^{-1} \bar{{\bR}}_2\\
	&+ \bar{{\bR}}_2^{\top}(\bSigma_1 + \bR_1)^{-1}(\bbW_1\, +\,  \bA_{\epsilon}^1)\bM + \bM^{\top}\bM&\\
	&=\bar{{\bR}}_2^{\top} (\bSigma_1 + \bR_1)^{-1}(\bSigma_1 + \bR_1)(\bSigma_1 + \bR_1)^{-1} \bar{{\bR}}_2 + \bM^{\top}\bM\\
	&=\bar{{\bR}}_2^{\top}(\,\bSigma_1 + \bR_1)^{-1}\bar{{\bR}}_2 \,+ \, \bM^{\top}\bM,
	\end{flalign*}
	where the second inequality holds since $\mathcal{C}(\bM) \subset \mathcal{N}(\bbW_1+ \bA_{\epsilon}^1)$.
	Expression (\ref{ex3}) can be satisfied if for any symmetric $\bbR_3 \succeq \bbR_2^{\top}(\,\bSigma_1 \, +\, \bR_1\,)^{-1}\bbR_2$, there exists $\bM$ such that $\bM^{\top}\bM=\bbR_3-\bbR_2^{\top}(\,\bSigma_1 \, +\, \bR_1\,)^{-1}\bbR_2$.
	
	Since $(\bbW_1+ \bA_{\epsilon}^1) \in \mathbb{R}^{r \times k}$ is a full row rank matrix, then dim$\big(\mathcal{N}(\,\bbW_1+\bA_{\epsilon}^1\,)\big)=k-r$.  Let $\bQ \in \mathbb{R}^{k \times (k-r)}$ be a basis for $\mathcal{N}(\bbW_1+ \bA_{\epsilon}^1)$. Then for every 
	$\bM \subset \{\bM \in \mathbb{R}^{ k \times (n-r)}\, \big| \, \|\bM\| \leq \delta, \, \, \mathcal{C}(\bM) \subset \mathcal{\bN}\big(\, \bbW_1+ \bA_{\epsilon}^1 \, \big) \}$,
	there exist $\bN \in \mathbb{R}^{(k-r) \times (n-r)}$ with $\bM=\bQ\bN$, which implies
	$
	\bM^{\top}\bM \,=\, \bN^{\top}\bQ^{\top}\bQ\bN \, = \, \bN^{\top}\bN.
	$
	Since $\bbR_3 - \bbR_2^{\top}(\bSigma_1 + \bR_1)^{-1}\bbR_2$ is the schur complement of $\bSigma + \bR_{\delta}$, then by the Guttman rank additivity formula, we get
	$
	k \geq \rank(\widetilde{\bSigma})=\rank(\bSigma_1 \, + \, \bR_1)+\rank(\bbR_3 - \bbR_2^{\top}(\bSigma_1 + \bR_1)^{-1}\bbR_2),
	$
	which implies
	$
	\rank(\bbR_3 - \bbR_2^{\top}(\bSigma_1 + \bR_1)^{-1}\bbR_2)\leq k-r.
	$
	Thus for any symmetric positive semi-definite matrix   $\bbR_3- \bbR_2^{\top}(\bSigma_1 + \bR_1)^{-1}\bbR_2$, there exist a matrix $\bN \in \mathbb{R}^{(k-r) \times (n-r)}$ such that $\bN^{\top} \bN = \bbR_3 - \bbR_2^{\top}(\bSigma_1 + \bR_1)^{-1}\bbR_2$. It follows that there exist a matrix $\bM \in \mathbb{R}^{k \times (n-r)}$, $\bM \triangleq \bQ\bN$, with 
	$
	\bM^{\top}\bM = \bN^{\top}\bN =\bbR_3 - \bbR_2^{\top}(\bSigma_1 + \bR_1)^{-1}\bbR_2. 
	$
	We have defined $\bA_{\epsilon}=\left[\begin{array}{l} \bA_{\epsilon}^1\\ \bA_{\epsilon}^2 \end{array} \right]$ such that $(\bbW+\bA_{\epsilon})(\bbW+\bA_{\epsilon})^{\top} = \widetilde{\bSigma}$.
	
	We now obtain an upper-bound on $\|\bA_{\epsilon}\|$. Since 
	$
	\big((\bbW_1\, +\,  \bA_{\epsilon}^1)^{\dagger}\big)^{\top}  (\bbW_1\, +\,  \bA_{\epsilon}^1)^{\dagger}\,=\, (\bSigma_1 + \bR_1)^{-1}(\bbW_1\, +\,  \bA_{\epsilon}^1)(\bbW_1\, +\,  \bA_{\epsilon}^1)^{\top} (\bSigma_1 + \bR_1)^{-1} = (\bSigma_1 + \bR_1)^{-1},
	$
	we obtain   
	\[\|\big((\bbW_1\, +\,  \bA_{\epsilon}^1)^{\dagger}\big)_{:,j}\|^2 \leq \dfrac{1}{{\sigma}_{min} -\delta} \quad \forall \, j \leq r.\]
	Then by the definition of $\bA_{\epsilon}^2$, we can bound its norm as follows
	\begin{equation}\label{boundAdelta2}
	\|\bA_{\epsilon}^2 \| \leq \|(\bbW_1\, +\,  \bA_{\epsilon}^1)^{\dagger}\|\|\bbR_2\| + \|\bM\| \leq \dfrac{\delta \sqrt{r}}{\sqrt{{\sigma}_{min} - \delta}}  +\delta.
	\end{equation}
	Using (\ref{boundAdelta1}) and (\ref{boundAdelta2}), we obtain
	\begin{flalign*}
	\|\bA_{\epsilon} \| & \leq \dfrac{3r^{2.5}\delta}{\sqrt{{\sigma}_{min}}}+\dfrac{\delta\sqrt{r}}{\sqrt{{\sigma}_{min} - \delta}}  +\delta \leq \dfrac{3r^{2.5}\delta}{\sqrt{{\sigma}_{min}}} +\dfrac{\delta\sqrt{2r}}{\sqrt{{\sigma}_{min}}}  +\delta\\
	& \leq\delta \, \, \dfrac{3r^{2.5} + \sqrt{2r}+ \sqrt{\sigma_{min}}}{\sqrt{{\sigma}_{min}}},
	\end{flalign*}
	where the second inequality assumes $\delta \leq \sigma_{min} /2$. Now, for a given $\epsilon > 0$, choose
	\[
	\delta \leq \min \Bigg\{ \dfrac{\epsilon\sqrt{{\sigma}_{min}}}{3r^{2.5} + \sqrt{2r}+ \sqrt{\sigma_{min}}} \, , \, \sigma_{min} /2 \Bigg\}.  
	\]
	This choice of $\delta$ leads to $\|\bA_{\epsilon} \|  \leq \epsilon $, which completes the proof.
 
 \end{proof}

\section{Proof of the Theorem \ref{thm: Two-layer} }\label{two-layer-app}
\begin{proof}
The proof for the degenerate case is done by constructing a descent direction if the point is critical but not global. Let $(\bbW_2$, $\bbW_1)$ be a degenerate critical point, i.e., rank($\bbW_2\bbW_1)< \min\{d_2,d_1,d_0\}$. Then, based on the dimensions of $d_0$, $d_1$, and $d_2$, we have one of the following cases:
	\begin{itemize}
		\item $d_2 < d_1 \mbox{ then } \exists \, \, \bb \neq \bzero \mbox{ such that } \bb \in \mathcal{N} \big(\bbW_2\big)$.
		\item $d_0 < d_1 \mbox{ then } \exists \, \, \bb \neq \bzero \mbox{ such that } \bb \in \mathcal{N} \big(\bbW_1^{\top}\big)$.
		\item $d_1 \leq d_2, \mbox{and } d_1 \leq d_0 \mbox{ then either } \bbW_2 \textrm{ is rank deficient and } \exists \, \, \bb \neq \bzero\;  \st \; \bb \in \mathcal{N} \big(\bbW_2\big) \mbox { or }\\
		 \bbW_1 \mbox{ is rank deficient and } \exists \, \, \bb \neq \bzero \mbox{ such that } \bb \in \mathcal{N} \big(\bbW_1^{\top}\big)$.
	\end{itemize}
	So in all cases either ${\cal N} \big( \bbW_2 \big) \neq \emptyset$ or ${\cal N} \big( \bbW_1^{\top} \big)\neq \emptyset$. Also, let ${\bDelta}=\bbW_2\bbW_1 \bX - \bY $. If  ${\bDelta}{\bX}^{\top}=\bzero$, then by convexity of the square loss error function, the point $(\bbW_2,\bbW_1)$ is a global minimum of (\ref{Two-layer-SE-loss-2}). Else, there exists $(i,j)$ such that $\big\langle  {\bX}_{i,:},{\bDelta}_{j,:} \big\rangle \neq 0$. We now use first and second order optimality conditions to construct a descent direction when the current critical point is not global.
	
	\noindent{\it First order optimality condition:} By considering perturbations in the directions $\bA \in \mathbb{R}^{d_2 \times d_1}$ and $\bB \in \mathbb{R}^{d_1 \times d_0}$ for the optimization problem
	\begin{equation}
	\min_{t} \, \, \dfrac{1}{2} \|({\bbW}_2 + t\bA)({\bbW}_1 + t\bB) \bX \, - \, \bY \, \|^2,
	\end{equation}
	we obtain the first order optimality condition:
	\[\big\langle \, \bA\bbW_1\bX \,+\, \bbW_2 \bB \bX \,,\, \bDelta \, \big\rangle =0, \quad \forall \bA \in \mathbb{R}^{d_2 \times d_1}, \, \bB \in \mathbb{R}^{d_1 \times d_0}.\]
	
	\noindent{\it and second order optimality condition:}
	\[2\,\big\langle \, \bA \bB \bX,\, \bDelta \, \big\rangle  + \|\bA \bbW_1\bX + \bbW_2 \bB \bX\|^2 \,  \geq 0 \quad  \forall \bA \in \mathbb{R}^{d_2 \times d_1}, \, \bB \in \mathbb{R}^{d_1 \times d_0}.\]
	Suppose $(\bbW_2,\bbW_1)$ is a critical point and there exists $\bb \neq 0$, $\bb \in \mathcal{N}(\bbW_2)$.  Define 
	\[{\bB}_{:,l} \triangleq \begin{cases}
	\begin{array}{ll}
	{\alpha}\bb \qquad & \quad \mbox{if } l=i,\\
	\bzero \qquad & \quad \mbox{otherwise}
	\end{array}
	\end{cases} \qquad
	{\bA}_{l,:} \triangleq \begin{cases}
	\begin{array}{ll}
	{\bb}^{\top} \qquad & \quad \mbox{if } l=j,\\
	\bzero \qquad & \quad \mbox{otherwise}
	\end{array}
	\end{cases}\]
	where $\alpha$ is a scalar constant.
	Then, using the second order optimality condition, for $\bc = \|\bA \bbW_1\bX \|^2$, we get
	$2\alpha \underbrace{\|{\bb}\|^2}_{\neq 0} \underbrace{\big\langle {\bX}_{i,:}, \bDelta_{j,:} \big \rangle}_{\neq 0} + \, c \geq 0.$
	Since this is true for every value of $\alpha$, ${\bb}$ should be zero which contradicts the assumption on the choice of $\bb$. Hence ${\cal N}(\bbW_2)=\emptyset$.
	Similarly, when  $(\bbW_2,\bbW_1)$ is a critical point and there exists $\ba^{\top} \neq 0$, $\ba^{\top} \in \mathcal{N}(\bbW_1^{\top})$, we can show that $(\bbW_2,\bbW_1)$ is a second order saddle point of  (\ref{Two-layer-SE-loss-2}). Combining these results, we get that every degenerate critical point that is not a global optimum is a second-order saddle point.
	
	We now show the result for the non-degenerate case. Let $(\bbW_2$, $\bbW_1)$ be a non-degenerate local minimum, i.e., rank($\bbW_2\bbW_1)=\min\{d_2,d_1,d_0\}$. {\color{black} It follows by Theorem~\ref{thm:Loc_Open}, that the matrix product is locally open at $\bbW_2\bbW_1$.} Then by Observation \ref{lm:LocalMinMapping}, $\bZ=\bbW_2\bbW_1$ is a local optimum of problem (\ref{Two-layer-SE-loss-formulated}) which is in fact global by Lemma \ref{lm:Relax-X-Assumption}. \end{proof}

\subsection{Proof of Corollary~\ref{cor: degenerate2layer}}\label{App:corollary_degenerate2layer}
\begin{proof}
	We follow the same steps used in the proof of Theorem \ref{thm: Two-layer} to show the result. Similar to the proof of Theorem~\ref{thm: Two-layer}, we obtain the following first and second order optimality conditions:
\[\langle \bA \bbW_1\bX + \bbW_2 \bB \bX, \nabla \ell(\bbW_2\bbW_1\bX - \bY) \rangle =0 \, \quad \, \forall \bA \in \mathbb{R}^{d_2 \times d_1}, \, \bB \in \mathbb{R}^{d_1 \times d_0}\]
\[2\langle \, \bA \bB \bX,\, \nabla \ell(\bbW_2\bbW_1\bX - \bY) \, \rangle  + h\big(\bA \bbW_1\bX, \, \bbW_2 \bB \bX ,\bbW_2\bbW_1\bX)  \geq 0 \, \,\,  \forall \bA \in \mathbb{R}^{d_2 \times d_1}, \, \bB \in \mathbb{R}^{d_1 \times d_0},\]
	where $h(\cdot)$ is a function that has a tensor representation. But we only need to know that it is a function of $\bA\bbW_1\bX, \, \bbW_2\bB\bX$, and $\bbW_2\bbW_1\bX$. If ${\nabla} \ell(\bbW_2\bbW_1\bX -\bY){\bX}^{\top}= \bzero$, then by convexity of $\ell(\cdot)$, $(\bbW_2, \bbW_1)$ is a global minimum. Otherwise, there exists $(i,j)$ such that $\big\langle  {\bX}_{i,:},\big({\nabla} \ell(\bbW_2\bbW_1\bX-\bY)\big)_{j,:} \big\rangle \neq 0$. Using the same former argument in proof of Theorem \ref{thm: Two-layer}, we choose $\bA$ and $\bB$ such that $h(\bA\bbW_1\bX, \, \bbW_2\bB\bX, \, \bbW_2\bbW_1\bX)$ is some constant that does not depend on $\alpha$, and 
	\[\big \langle \, \bA\bB\bX, \nabla \ell(\bbW_2\bbW_1\bX-\bY) \, \rangle =\alpha \underbrace{\big\langle  {\bX}_{i,:},\big({\nabla} \ell(\bbW_2\bbW_1\bX - \bY)\big)_{j,:} \big\rangle}_{\neq 0}.
	\] 
	Then by proper choice of $\alpha$ we show that the point $(\bbW_2,\bbW_1)$ is a second order saddle point.\end{proof}


\section{Proof of Theorem \ref{thm:Deep-linear-Result}}\label{Deep-linear-result-app}
Consider the training problem of a multi-layer deep linear neural network: 
\begin{equation}\label{Linear_SEloss-3}
\displaystyle{ \min_{\bW}} \, \, \dfrac{1}{2} \|\bW_h \cdots \bW_1\bX \, - \, \bY \, \|^2.
\end{equation}
Here  $\bW = \big( \bW_i \big)_{i=1}^h$, $\bW_i \in \mathbb{R}^{d_i \times d_{i-1}}$ are the weight matrices, $\bX \in \mathbb{R}^{d_0 \times n}$ is the input training data, and $\bY \in \mathbb{R}^{d_h \times n}$ is the target training data. Based on our general framework, the corresponding auxiliary optimization problem is given by 
\begin{equation} \label{Transformed_Linear_SEloss-3}
\begin{array}{ll}
\displaystyle{ \operatornamewithlimits{\mbox{minimum}}_{\bZ \in \mathbb{R}^{d_h \times d_0}}} & \dfrac{1}{2} ||\bZ\bX \, - \, \bY||^2 \\
\mbox{subject to} & \rank(\bZ) \leq d_p \triangleq \min_{0 \leq i\leq h} \,d_i
\end{array}.
\end{equation}

\begin{lemma}\label{lm:degenerate-local-global}
	Consider a degenerate critical point $\bbW = (\bbW_h, \ldots , \bbW_1)$ with \\
	${\cal N}( \bbW_i)$ and ${\cal N}\big(  \bbW_i^{\top}  \big)$ for $h-1 \leq i \leq 2$ all non-empty. If 
	\[{\cal N}\big( \, \bbW_h \,\big) \mbox{ is non-empty} \quad \mbox{or} \quad {\cal N}\big( \, \bbW_1^{\top} \,\big) \mbox{ is non-empty},
	\]
	then $\bbW$ is either a global minimum or a saddle point of problem (\ref{Linear_SEloss-3}).
\end{lemma}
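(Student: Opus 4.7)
The plan is to mimic the proof template of Theorem \ref{thm: Two-layer}: either show that the gradient in the auxiliary $\bZ$-variable vanishes (so $\bbW$ is a global minimum by convexity) or exhibit a direction along which $f''(0)<0$ (so $\bbW$ is a saddle). Writing $\bb{\Delta}\triangleq\bbW_h\cdots\bbW_1\bX-\bY$, the easy case is $\bb{\Delta}\bX^T=\mathbf{0}$: by convexity of $\bZ\mapsto\|\bZ\bX-\bY\|^2$, the matrix $\bbW_h\cdots\bbW_1$ is then an unconstrained minimizer and so remains optimal under the rank constraint of \eqref{Transformed_Linear_SEloss-3}, making $\bbW$ globally optimal. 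From here on I assume $\bb{\Delta}\bX^T\neq\mathbf{0}$ and pick $(i,j)$ with $(\bb{\Delta}\bX^T)_{ji}\neq 0$. The two cases $\mathcal{N}(\bbW_h)\neq\{0\}$ and $\mathcal{N}(\bbW_1^T)\neq\{0\}$ are handled by analogous arguments — the latter runs the construction from the input end of the network rather than the output end — so I focus on $\bb{u}\in\mathcal{N}(\bbW_h)\setminus\{0\}$.

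Along any curve $\bbW+t\bb{A}$ through the critical point, $f''(0)=2\langle\bb{\Delta},S_2\rangle+\|S_1\|^2$, where $S_1$ and $S_2$ are the first- and second-order coefficients of $\prod_k(\bbW_k+t\bb{A}_k)\bX$. Following the two-layer argument, I would fix $\bb{A}_h=\bb{e}_j\bb{u}^T$ so that $\|S_1\|^2$ is a constant $c$ independent of a scaling parameter $\alpha$ attached to a second perturbation $\bb{A}_\ell$. The goal is to design $\bb{A}_\ell$ so (a) its own first-order contribution to $S_1$ vanishes identically and (b) the $\bb{A}_h$--$\bb{A}_\ell$ cross-term contributes an $\alpha$-linear piece to $\langle\bb{\Delta},S_2\rangle$ with nonzero coefficient; then $|\alpha|$ large with the right sign forces $f''(0)<0$.

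The cleanest instance is $\ell=h-1$ with $\bb{A}_{h-1}=\alpha\bb{u}\bb{q}^T$: the identity $\bbW_h\bb{u}=\mathbf{0}$ gives (a), and a direct expansion yields
\[
\langle\bb{\Delta},S_2\rangle=\alpha\,\|\bb{u}\|^2\,\bb{q}^T\bigl(\bbW_{h-2}\cdots\bbW_1\bX\bb{\Delta}^T\bigr)_{:,j},
\]
which is nonzero for appropriate $(\bb{q},j)$ whenever $\bbW_{h-2}\cdots\bbW_1\bX\bb{\Delta}^T$ is not identically zero. The first-order condition for $\bb{A}_{h-1}$ at a critical point only yields the weaker $\bigl(\bbW_{h-2}\cdots\bbW_1\bX\bb{\Delta}^T\bigr)\bbW_h=\mathbf{0}$, so this construction succeeds in generic situations and fails only when that partial gradient itself vanishes — essentially when $\bbW_h$ is additionally full row rank.

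The hardest part will be handling these exceptional subcases, which is exactly where the middle-layer null-space hypotheses get consumed. My plan is to sweep $\ell$ from $h-1$ down to $1$, at each level replacing the role of $\bb{u}$ by a vector of the form $\bbW_{h-1}\cdots\bbW_{\ell+1}\bb{p}\in\mathcal{N}(\bbW_h)\setminus\{0\}$ for some $\bb{p}$ satisfying $\bbW_h\cdots\bbW_{\ell+1}\bb{p}=\mathbf{0}$; such $\bb{p}$ exists precisely when $\mathcal{N}(\bbW_h)\cap\mathcal{C}(\bbW_{h-1}\cdots\bbW_{\ell+1})\neq\{0\}$. With this updated $\bb{u}$ and the paired perturbation $\bb{A}_\ell=\alpha\bb{p}\bb{q}^T$, the same computation now yields
\[
\langle\bb{\Delta},S_2\rangle=\alpha\,\|\bb{u}\|^2\,\bb{q}^T\bigl(\bbW_{\ell-1}\cdots\bbW_1\bX\bb{\Delta}^T\bigr)_{:,j},
\]
whose inner column at $\ell=1$ is $(\bX\bb{\Delta}^T)_{:,j}$, guaranteed nonzero for our chosen $j$ because $\bb{\Delta}\bX^T\neq\mathbf{0}$. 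The key technical step is a dimension-counting argument using the nontriviality of $\mathcal{N}(\bbW_k)$ and $\mathcal{N}(\bbW_k^T)$ at every middle layer to certify that the intersection condition holds for some $\ell\in\{1,\ldots,h-1\}$ at which the inner column is also nonzero; once such $\ell$ is located, $f''(0)=2\alpha\cdot(\text{nonzero})+c<0$ for suitable $\alpha$ exhibits the required descent direction and completes the proof.
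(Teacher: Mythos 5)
Your reduction to the two easy facts (if $\bb{\Delta}\bX^T=\mathbf{0}$ then convexity gives global optimality; otherwise hunt for a descent direction built from a null vector of $\bbW_h$ or of $\bbW_1^T$) matches the paper, and your $\ell=h-1$ computation is correct as far as it goes. But the core mechanism of your plan --- producing a direction with $f''(0)<0$ --- cannot work in general for $h\ge 3$, and the ``key technical step'' you defer is precisely where it breaks. Consider the degenerate critical point $\bbW=(\mathbf{0},\ldots,\mathbf{0})$ with $h=3$ and $\bY\neq\mathbf{0}$: every hypothesis of the lemma holds, yet $\prod_k(\bbW_k+t\bb{A}_k)\bX=t^3\bb{A}_3\bb{A}_2\bb{A}_1\bX$, so $S_1=S_2=\mathbf{0}$ and $f''(0)=0$ for \emph{every} direction. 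No choice of $\ell$, $\bb{p}$, $\bb{q}$ can make your cross term nonzero, because every second-order cross term $\bb{A}_h\bbW_{h-1}\cdots\bb{A}_\ell\cdots\bbW_1\bX$ still contains an unperturbed zero factor, and your vector $\bb{u}=\bbW_{h-1}\cdots\bbW_{\ell+1}\bb{p}$ is forced to vanish. The paper itself flags this phenomenon (the remark after Lemma~\ref{lm:Relax-X-Assumption} that for deeper networks ``the property that all saddles are second order does not hold''), so any proof confined to the Hessian is doomed.

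The paper's proof escapes this by going to higher order. It introduces the set ${\cal K}=\{k\mid 3\le k\le h,\ {\cal N}(\bbW_k)\perp{\cal N}((\bbW_{k-1}\cdots\bbW_2)^T)\}$, picks $k^*=\max{\cal K}$ (or runs a separate construction when ${\cal K}=\emptyset$), and chains rank-one perturbations $\bbA_k=\bb{b}_k\bb{p}_k^T$ across \emph{all} layers $k^*,\ldots,h$ simultaneously, chosen so that $\bbW_{k+1}\bbA_k=\mathbf{0}$ and $\bbA_k\bbW_{k-1}\cdots\bbW_2=\mathbf{0}$. This annihilates every mixed term except two, reducing $g(t)$ to $\tfrac12\|t^{h-k^*+1}\bb{M}_1+t^{h-k^*+2}\bb{M}_2+\bb{\Delta}\|^2$, and descent is then read off from the derivatives of order $h-k^*+1$ and $h-k^*+2$ by choosing the signs of $\alpha_h$ and $\alpha_1$. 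Your single-pair perturbation recovers only the $k^*=h-1$ instance of this scheme; to repair your argument you would need to adopt the multi-layer chain and abandon the insistence on second-order negativity, at which point you have essentially reconstructed the paper's proof.
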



\begin{proof}
	Suppose that ${\cal N}\big( \, \bbW_h \,\big)$ is non-empty. Let $\bDelta=\bbW_h \cdots \bbW_1\bX-\bY$. If $\bDelta\bX^{\top} = \bzero$,  by convexity of the  loss  function, the point $\bbW=(\bbW_h, \ldots ,\bbW_1)$ is a global minimum of (\ref{Linear_SEloss-3}). Else, there exist $(i,j)$ such that $\big\langle \, \bX_{i,:}, \, \bDelta_{j , :} \, \big\rangle \, \neq 0$. We define the set ${\cal K} \triangleq \{ k \in \mathbb{N}\, | \, 3 \leq k \leq h, \quad {\cal N}(\bbW_k) \, \perp {\cal N}\big((\bbW_{k-1}\bbW_{k-2} \cdots \bbW_{2})^{\top}\big)\}$. We split the rest of the proof into two cases that correspond to ${\cal K}$ being empty and non-empty. 
	
	\noindent{\bf Case a:}
	Assume ${\cal K}$ is non-empty. We define $k^{*} \triangleq \displaystyle{\operatornamewithlimits{\mbox{maximum}}_{k  \in {\cal K}}} \, \, k $. 
	By definition of the set ${\cal K}$ and choice of $k^{*}$, the null space ${\cal N} \big( \,\bbW_{k^{*}} \big)$ is orthogonal to the null-space ${\cal N} \big(\, (\bbW_{k^{*}-1} \cdots \bbW_2)^{\top} \big)$. This implies there exists a non-zero $\bb \in \mathbb{R}^{d_{k^{*}-1}}$ such that $\bb \in {\cal N} \big( \,\bbW_{k^{*}} \, \big) \, \cap \, {\cal C} \big(\, \bbW_{k^{*}-1} \cdots \bbW_2 \, \big)$. By considering perturbation in directions $\bA = (\bA_h, \ldots, \bA_1)$, $\bA_i \in \mathbb{R}^{d_i \times d_{i-1}}$ for the optimization problem
	\begin{equation}\label{g(t)}
	\displaystyle{ \min_{t}} \, \,g(t) \triangleq \dfrac{1}{2} \|(\bbW_h + t\bA_h)\cdots(\bbW_1+ t\bA_1)\bX \, - \, \bY \, \|^2,
	\end{equation}
	we examine the optimality conditions for a specific direction $\bbA$.
	
	Let
	\[(\bbA_h)_{l,:} \triangleq  \begin{cases}
	{\alpha}_h \bp_h^{\top} \quad \mbox{if } l=j,\\
	\bzero \qquad \quad \mbox{otherwise}\\
	\end{cases} \quad
	(\bbA_1)_{:,l} \triangleq \begin{cases}
	{\alpha}_1 \bb_1 \quad \mbox{if } l=i,\\
	\bzero \qquad  \, \, \, \mbox{otherwise}\\
	\end{cases}\]
	\[
	\bbA_k  \triangleq \begin{cases}
	\begin{array}{ll}
	\bb_k\bp_k^{\top} \quad & \mbox{if } k^{*} + 1 \leq k \leq h-1\\
	\bb_k\bb^{\top} \quad & \mbox{if }  k = k^{*}\\
	\bzero						 & \mbox{if } 2 \leq k \leq  k^{*}-1,
	\end{array}
	\end{cases} 
	\]
	where ${\alpha}_h$ and ${\alpha}_1$ are scalar constants, $\bb_1 \in \mathbb{R}^{d_1}$ such that $\bbW_{k^{*}-1}\cdots \bbW_{2}\bb_1 = \bb$, and
	\begin{equation}\label{bkpk}
	\bp_k \in {\cal N}\big( (\bbW_{k-1} \cdots \bbW_2)^{\top}\big), \; \bb_{k-1} \in {\cal N}\big( \bbW_{k}\big), \; \mbox{and } \langle \, \bp_k, \, \bb_{k-1} \, \rangle \neq 0 \; \forall \, \, k^{*}+1 \leq k \leq h.
	\end{equation} 
	Notice that such $\bp_k$ and $\bb_{k-1}$ exist from the definition of ${\cal K}$ and choice of $k^{*}$.
	For this particular choice of $\bbA = (\bbA_h, \ldots , \bbA_1)$, we obtain 
	\begin{equation}\label{zero f}
	\bbW_{k+1}\bbA_k =\bzero  \mbox{ for } k^{*} \leq k \leq h-1; \; \mbox{and} \; \bbA_k\bbW_{k-1}\cdots \bbW_{2} = \bzero \; \mbox{for } k^{*}+1 \leq k \leq h.
	\end{equation}
	We now show that $(\bbA_{h}, \ldots , \bbA_{1})$ is in fact a descent direction. Before proceeding, let us define some notation to ease the expressions of the optimality conditions. Let ${\cal V}$ be an index set that is a subset of $\{1, \ldots ,h\}$. We define the function $f(\bbA^{\cal V}, \bbW^{-{\cal V}})$ which is the matrix product attained from $\bbW_h \cdots  \bbW_1\bX$ by replacing matrices $\bbW_{v}$ by matrices $\bbA_{v}$ for every $v \in {\cal V}$. For instance, if $h=5$ and ${\cal V}=\{2,3,5\}$, then $f(\bbA^{\cal V}, \bbW^{-{\cal V}}) = \bbA_5\bbW_4\bbA_3\bbA_2\bbW_1\bX$. We now determine index sets ${\cal V}$, with $|{\cal V}|\geq 1$, that correspond to non-zero $f(\bbA^{\cal V} , \bbW^{-{\cal V}} )$. First note by definition of $\bbA$, if ${\cal V} \, \cap \, \{k^{*}-1, \ldots , 2\} \neq \emptyset$, then $f(\bbA^{\cal V} , \bbW^{-{\cal V}} )=\bzero$. Also by (\ref{zero f}), for any $k^{*} \leq v \leq h-1$, if $v \in {\cal V}$ then either $\{k^{*}, \ldots , h\} \in {\cal V}$ or $f(\bbA^{\cal V} , \bbW^{-{\cal V}})=\bzero$. This implies that $\bbA_h\cdots \bbA_{k^{*}}\bbW_{k^{*}-1}\cdots\bbW_1\bX$ and $\bbA_h\cdots \bbA_{k^{*}}\bbW_{k^{*}-1}\cdots \bbW_{2}\bbA_1\bX$ are the only terms that can take non-zero values. Using the definition equation (\ref{g(t)}) we obtain
	\[g(t) = \dfrac{1}{2}\|t^{h-k^{*} +1}\bbA_h\cdots \bbA_{k^{*}}\bbW_{k^{*}-1}\cdots\bbW_1\bX + t^{h-k^{*} + 2}\bbA_h\cdots \bbA_{k^{*}}\bbW_{k^{*}-1}\cdots \bbW_{2}\bbA_1\bX + \bDelta\|^2.\]
	
	It follows that
	$
	\dfrac{{\partial}^r g(t)}{{\partial} t^r} \Bigg|_{t=0} = 0 \quad \mbox{for all } r \leq h-k^{*}
	$
	and
	\[\dfrac{{\partial}^{h-k^{*}+1} g(t)}{{\partial} t^{h-k^{*}+1}} \Bigg|_{t=0} = c_1\big\langle \, \bbA_h\cdots \bbA_{k^{*}}\bbW_{k^{*}-1}\cdots\bbW_1\bX, \bDelta \, \big\rangle  
	,\]
	where $c_1 >0$ is a scalar. If $\big\langle \, \bbA_h\cdots \bbA_{k^{*}}\bbW_{k^{*}-1}\cdots\bbW_1\bX, \bDelta \, \big\rangle \neq 0$, then by properly choosing the sign of ${\alpha}_h$ such that\\ $\big\langle \, \bbA_h\cdots \bbA_{k^{*}}\bbW_{k^{*}-1}\cdots\bbW_1\bX, \bDelta \, \big\rangle < 0$, we get a descent direction. Otherwise, 
	\begin{flalign*}
	\dfrac{{\partial}^{h-k^{*}+2} g(t)}{{\partial} t^{h-k^{*}+2}} \Bigg|_{t=0} = c_1\big\langle \, \bbA_h\cdots \bbA_{k^{*}}\bbW_{k^{*}-1}\cdots\bbW_2 \bbA_1\bX, \bDelta \, \big\rangle + h(\bbA_h\cdots \bbA_{k^{*}}\bbW_{k^{*}-1}\cdots\bbW_1\bX).
	\end{flalign*}
	
	where $c_1 > 0$ is a scalar, and $h(\cdot)$ is a function of $\bbA_h\cdots \bbA_{k^{*}}\bbW_{k^{*}-1}\cdots\bbW_1\bX$.\\
	We now evaluate the term $\big\langle \, \bbA_h\cdots \bbA_{k^{*}}\bbW_{k^{*}-1}\cdots \bbW_2\bbA_1\bX, \bDelta \, \big\rangle$. Since $(\bbA_h)_{l,:} = \bzero$ for all $l \neq j$ and $(\bbA_1)_{:,l} = \bzero$ for all $l \neq i$, we only need to compute the $(j,i)$ index $\big( \bbA_h\cdots \bbA_{k^{*}}\bbW_{k^{*}-1}\cdots \bbW_2\bbA_1\big)_{(j,i)}$ as all other indices are zero. For some constant $c=\bp_h^{\top}\bb_{h-1}\bp_{h-1}^{\top}\bb_{h-2}\cdots\bp_{k^{*}+1}^{\top}\bb_{k^{*}}\bb^{\top}\bb$, we obtain
	\begin{flalign*}
	&\;c_1\big( \bbA_h\cdots \bbA_{k^{*}}\bbW_{k^{*}-1}\cdots \bbW_2\bbA_1\big)_{(j,i)}\\
	= & \;c_1{\alpha}_h{\alpha}_1 \bp_h^{\top}\bb_{h-1}\bp_{h-1}^{\top}\bb_{h-2}\cdots\bp_{k^{*}+1}^{\top}\bb_{k^{*}}\bb^{\top}\bbW_{k^{*}-1}\cdots \bbW_{2}\bb_1\\
	= & \;c_1{\alpha}_h{\alpha}_1 \bp_h^{\top}\bb_{h-1}\bp_{h-1}^{\top}\bb_{h-2}\cdots\bp_{k^{*}+1}^{\top}\bb_{k^{*}}\bb^{\top}\bb= {\alpha}_h{\alpha}_1c ,
	\end{flalign*}
	where $c$ is non-zero by our choice of $\bb$, $\bp_k$ and $\bb_{k-1}$ for $k^{*}+1 \leq k \leq h$ as defined in (\ref{bkpk}). For a fixed ${\alpha}_h \neq 0$, $h(\bbA_h\cdots \bbA_{k^{*}}\bbW_{k^{*}-1}\cdots\bbW_1\bX)$ is a constant scalar we denote by $c_{\alpha}$. Then by properly choosing ${\alpha}_1$ such that
	$
	\underbrace{{\alpha}_h}_{\neq 0}{\alpha}_1\underbrace{c}_{\neq 0} \underbrace{\big\langle \, \bX_{i,:}, \bDelta_{j,:} \, \big\rangle}_{\neq 0} + c_{\alpha} < 0,
	$
	we get a descent direction. This completes the first case.
	
	\noindent{\bf Case b:}
	Assume ${\cal K}$ is empty. We consider
	\[(\bbA_h)_{l,:}\triangleq \begin{cases}
	\begin{array}{l l}
	{\alpha}_h \bp_h^{\top} \quad & \mbox{if } l=j,\\
	\bzero & \mbox{otherwise}\\
	\end{array}\end{cases} \quad
	(\bbA_1)_{:,l}\triangleq \begin{cases}
	\begin{array}{l l}
	{\alpha}_1 \bb_1 \quad &\mbox{if } l=i,\\
	\bzero & \mbox{otherwise}\\
	\end{array} \end{cases}\]
	\[
	\bbA_k  \triangleq \begin{cases}
	\begin{array}{ll}
	\bb_k\bp_k^{\top} \quad & \mbox{if } 3 \leq k \leq h-1\\
	\bb_k\bb_1^{\top} \quad & \mbox{if }  k = 2,
	\end{array}
	\end{cases} 
	\]
	where ${\alpha}_h$ and ${\alpha}_1$ are scalar constants, $\bb_1 \in {\cal N}\big( \bbW_2 \big)$, and
	\begin{equation}\label{bkpk2}
	\bp_k \in {\cal N}\big( (\bbW_{k-1} \cdots \bbW_2)^{\top}\big), \; \bb_{k-1} \in {\cal N}\big( \bbW_{k}\big), \; \mbox{and } \langle \, \bp_k, \, \bb_{k-1} \, \rangle \neq 0, \; \forall \, \, 3 \leq k \leq h.
	\end{equation} 
	For this particular choice of $\bbA = (\bbA_h, \ldots , \bbA_1)$, we obtain 
	$
	\bbW_{k+1}\bbA_k =0$ for $2 \leq k \leq h-1$; and $\bbA_k\bbW_{k-1}\cdots \bbW_{2} =0$ for $3 \leq k \leq h.$
	We now determine index sets ${\cal V}$, with $\bigl| {\cal V} \bigr| \geq 1$, that correspond to non-zero $f(\bbA^{\cal V} , \bbW^{-{\cal V}} )$. By (\ref{bkpk2}), for any $2 \leq v \leq h-1$, if $v \in {\cal V}$ then either $\{2, \ldots , h\} \in {\cal V}$ or $f(\bbA^{\cal V} , \bbW^{-{\cal V}})=\mathbf{0}$. This directly imply that 
	$\bbA_h\cdots \bbA_{2}\bbW_{1}\bX$ and $\bbA_h\cdots \bbA_{1}\bX$ are the only terms that can take non-zero values. Using the definition of equation (\ref{g(t)}) we obtain
	
	$
	g(t) = \dfrac{1}{2}\|t^{h-1}\bbA_h\cdots \bbA_{2}\bbW_{1}\bX + t^{h}\bbA_h\cdots \bbA_{1}\bX + \bDelta\|^2.
	$
	It follows that
	$
	\dfrac{{\partial}^r g(t)}{{\partial} t^r} \Bigg|_{t=0} = 0 \quad \mbox{for all } r \leq h-2,
	$
	and
	$
	\dfrac{{\partial}^{h-1} g(t)}{{\partial} t^{h-1}} \Bigg|_{t=0} = c_1\big\langle \, \bbA_h\cdots \bbA_{2}\bbW_1\bX, \bDelta \, \big\rangle  
	,$
	where $c_1>0$ is a scalar. If $\big\langle \, \bbA_h\cdots \bbA_{2}\bbW_1\bX, \bDelta \, \big\rangle \neq 0$, then by properly choosing the sign of ${\alpha}_h$ such that $\big\langle \, \bbA_h \cdots \bbA_{2}\bbW_1\bX, \bDelta \, \big\rangle < 0$, we get a descent direction. Otherwise,
	\[
	\dfrac{{\partial}^{h} g(t)}{{\partial} t^{h}} \Bigg|_{t=0} = c_1\big\langle \, \bbA_h\cdots \bbA_{1}\bX, \bDelta \, \big\rangle + h(\bbA_h\cdots \bbA_{2}\bbW_{1}\bX)  
	,\]
	where $c_1 >0$ is a scalar, and $h(\cdot)$ is a function of $\bbA_h\cdots \bbA_{2}\bbW_1\bX$. We now evaluate the term $\big\langle \, \bbA_h\cdots \bbA_{1}\bX, \bDelta \, \big\rangle$. Since $(\bbA_h)_{l,:} = \bzero$ for all $l \neq j$ and $(\bbA_1)_{:,l} = \bzero$ for all $l \neq i$, we only need to compute the $(j,i)$ index $\big( \bbA_h\cdots \bbA_{1}\big)_{(j,i)}$ as all other indices are zero. For some constant 
	\[c=\bp_h^{\top}\bb_{h-1}\bp_{h-1}^{\top}\bb_{h-2}\cdots\bp_{3}^{\top}\bb_{2}\bb_1^{\top}\bb_1,\]
	we obtain
	\begin{flalign*}
	c_1\big( \bbA_h\cdots \bbA_{1}\big)_{(j,i)}& = c_1{\alpha}_h{\alpha}_1 \bp_h^{\top}\bb_{h-1}\bp_{h-1}^{\top}\bb_{h-2}\cdots\bp_{3}^{\top}\bb_{2}\bb_1^{\top}\bb_1 = {\alpha}_h{\alpha}_1 c,
	\end{flalign*}
	where $c$ is non-zero by our choice of $\bb$, $\bp_k$ and $\bb_{k-1}$ for $3 \leq k \leq h$ as defined in (\ref{bkpk2}). For a fixed ${\alpha}_h \neq 0$, $h(\bbA_h\cdots \bbA_{2}\bbW_1\bX)$ is a constant scalar we denote by $c_{\alpha}$. Then by properly choosing $\alpha_1$ such that 
	\[\underbrace{{\alpha}_h}_{\neq 0}{\alpha}_1\underbrace{c}_{\neq 0} \underbrace{\big\langle \, \bX_{i,:}, \bDelta_{j,:} \, \big\rangle}_{\neq 0} + c_{\alpha} < 0,\]
	we get a descent direction. This completes the second case.
	
	Now if ${\cal N}\big( \, \bbW_1^{\top} \,\big)$ is non-empty, we define the set 
	\[{\cal K} \triangleq \{ k \, | \, 1 \leq k \leq h-2, \, \, {\cal N}(\bbW_{h-1}\cdots \bbW_{k+1}) \, \perp \, {\cal N}(\bbW_k^{\top})\},\]
	and use a similar proof scheme to show the result. More specifically, we split the proof into two cases that correspond to ${\cal K}$ being empty and non-empty.\\
	
	\noindent{\bf Case a:}
	Assume ${\cal K}$ is non-empty. We define $k^{*} \triangleq \displaystyle{\operatornamewithlimits{\mbox{minimum}}_{k  \in {\cal K}}} \, \, k $. 
	By definition of the set ${\cal K}$ and choice of $k^{*}$, the null space ${\cal N} \big( \,\bbW_{k^{*}}^{\top} \big)$ is orthogonal to the null-space ${\cal N} \big(\, \bbW_{h-1} \cdots \bbW_{k^{*}+1} \big)$. This implies there exists a non-zero $\bp \in \mathbb{R}^{d_{k^{*}}}$ such that $\bp \in {\cal N} \big( \,\bbW_{k^{*}}^{\top} \, \big) \, \cap \, {\cal C} \big(\, (\bbW_{h-1} \cdots \bbW_{k^{*}+1})^{\top} \big)$. By considering perturbation in directions $\bA = (\bA_h, \ldots, \bA_1)$, $\bA_i \in \mathbb{R}^{d_i \times d_{i-1}}$ for the optimization problem
	\begin{equation}
	\displaystyle{\min_{t}} \, \,g(t) \triangleq \dfrac{1}{2} \|(\bbW_h + t\bA_h)\cdots(\bbW_1+ t\bA_1)\bX \, - \, \bY \, \|^2,
	\end{equation}
	we examine the optimality conditions for a specific direction $\bbA$.
	
	Let
	\[(\bbA_h)_{l,:} \triangleq \begin{cases}
	{\alpha}_h \bp_h^{\top} \quad \mbox{if } l=j,\\
	\bzero \qquad \quad \mbox{otherwise}\\
	\end{cases} \quad
	(\bbA_1)_{:,l} \triangleq \begin{cases}
	{\alpha}_1 \bb_1 \quad \mbox{if } l=i,\\
	\bzero \qquad  \, \, \, \mbox{otherwise}\\
	\end{cases}\]
	\[
	\bbA_k  \triangleq \begin{cases}
	\begin{array}{ll}
	\bb_k\bp_k^{\top} \quad & \mbox{if } 2 \leq k \leq k^{*}-1\\
	\bp\bp_k^{\top} \quad & \mbox{if }  k = k^{*}\\
	\bzero						 & \mbox{if } k^{*} +1 \leq k \leq  h-1,
	\end{array}
	\end{cases} 
	\]
	where ${\alpha}_h$ and ${\alpha}_1$ are  constants and $\bp_h \in \mathbb{R}^{d_{h-1}}$ with  
	\[\bp_h^{\top}\bbW_{h-1}\cdots \bbW_{k^{*}+1} = \bp^{\top}, \quad 
	\bp_k \in {\cal N}\big( \bbW_{k-1}^{\top} \big), \quad \bb_{k-1} \in {\cal N}\big( \bbW_{h-1}\cdots \bbW_{k} \big), \quad \mbox{and } \langle \, \bp_k, \, \bb_{k-1} \, \rangle \neq 0
	\]
	for all $ 2 \leq k \leq k^{*}$.
	Notice that such $\bp_k$ and $\bb_{k-1}$ exist from the definition of ${\cal K}$ and choice of $k^{*}$.
	For this particular choice of $\bbA = (\bbA_h, \ldots , \bbA_1)$, we obtain 
	\begin{equation}
	\bbA_k\bbW_{k-1} =\bzero \; \mbox{for} \quad 2 \leq k \leq k^{*}; \quad \mbox{and} \quad \bbW_{h-1}\cdots \bbW_{k+1}\bbA_{k} = \bzero \; \mbox{for } 1 \leq k \leq k^{*} - 1.
	\end{equation}
	The same argument used above can be used to show that $(\bbA_{h}, \ldots , \bbA_{1})$ is actually a descent direction. This completes the proof of the first case.
	
	\noindent{\bf Case b:}
	Assume ${\cal K}$ is empty. We consider
	\[(\bbA_h)_{l,:} \triangleq \begin{cases}
	{\alpha}_h \bp_h^{\top} \quad \mbox{if } l=j,\\
	\bzero \qquad \quad \mbox{otherwise}\\
	\end{cases} \quad
	(\bbA_1)_{:,l} \triangleq \begin{cases}
	{\alpha}_1 \bb_1 \quad \mbox{if } l=i,\\
	\bzero \qquad  \, \, \, \mbox{otherwise}\\
	\end{cases}\]
	\[ 
	\bbA_k  \triangleq \begin{cases}
	\begin{array}{ll}
	\bb_k\bp_k^{\top} \quad & \mbox{if } 2 \leq k \leq h-2\\
	\bp_h\bp_k^{\top} \quad & \mbox{if }  k = h-1,
	\end{array}
	\end{cases} 
	\]
	where ${\alpha}_h$ and ${\alpha}_1$ are scalar constants, $\bp_h \in {\cal N}\big( \bbW_{h-1}^{\top} \big)$, and
	$
	\bp_k \in {\cal N}\big( \bbW_{k-1}^{\top} \big), \; \bb_{k-1} \in {\cal N}\big( \bbW_{h-1}\cdots \bbW_{k} \big),$ and $\langle \, \bp_k, \, \bb_{k-1} \, \rangle \neq 0 \; \forall \, \, 2 \leq k \leq h-1.
	$
	For this particular choice of $\bbA $, we obtain 
	$
	\bbA_k\bbW_{k-1} =\bzero \quad \mbox{for } 2 \leq k \leq h - 1\quad \mbox{and} \quad \bbW_{h-1}\cdots \bbW_{k+1}\bbA_{k} = \bzero \quad \mbox{for } 1 \leq k \leq h-2.
	$
	The same argument used above can be used to show that $(\bbA_{h}, \ldots , \bbA_{1})$ is actually a descent direction. This completes the second case and thus completes the proof.
	\end{proof}
	
	Following the same steps of the proof in Lemma \ref{lm:degenerate-local-global}, we get the same result when replacing the square loss error by a general convex and differentiable function $\ell(\cdot)$. We are now ready to prove the main result restated below.\\

\noindent\textbf{Theorem~\ref{thm:Deep-linear-Result}}
If there does not exist $p_1$ and $p_2$, $1 \leq p_1 < p_2 \leq h-1$ with $d_h > d_{p_2}$ and $d_0 > d_{p_1}$, then every local minimum of problem~(16) is a global minimum.

\begin{proof}
	We now show that if such a pair $\{p_2, p_1\}$ does not exist, then if $\bbW$ is not a global minimum, we can construct a descent direction.
	
	First notice that if for some $1 \leq i \leq h-1$, $\bbW_i$ is full column rank, then using Proposition 1, ${\cal M}_{i+1,i}(\cdot)$ is locally open at $(\bbW_{i+1},\bbW_i)$ and $\bbW_{i+1}\bbW_i \in \mathbb{R}^{d_{i+1} \times d_{i-1}}$. Using Observation 1, we conclude that any local minimum of problem (33) relates to a local minimum of the problem obtained by replacing $\bbW_{i+1}\bbW_{i}$ by $\bar{\bZ}_{i+1,i} \in \mathbb{R}^{d_{i+1} \times d_{i-1}}$. By a similar argument, we conclude that if $\bbW_i$ is a full row rank for some $2 \leq i \leq h$, any local minimum of problem (33) relates to local minimum of the problem obtained by replacing $\bbW_{i}\bbW_{i-1}$ by $\bar{\bZ}_{i,i-1} \in \mathbb{R}^{d_{i} \times d_{i-2}}$. Thus, if $\bbW=(\bbW_h, \ldots, \bbW_1)$ is a local minimum of problem (33), the new point 
	$\bar{\bZ}=(\bar{\bZ}_{h^{\prime}}^{\prime}, \ldots , \bar{\bZ}_1^{\prime})$, where $\bar{\bZ}_i \in \mathbb{R}^{d_i^{\prime} \times d_{i-1}^{\prime}} $ and $h^{\prime} \leq h$, is a local minimum of the problem attained by applying the replacements discussed above. If
	$h^{\prime} = 1$, we get the desired result from Lemma 7. Else, if $h^{\prime} = 2$, the auxiliary problem becomes a two layer linear network for which Theorem 8 provides the desired result. When $h^{\prime} >2$, examine $d_{h^{\prime}}^{\prime}, d_{h^{\prime}-1}^{\prime}, d_1^{\prime}$ and $d_0^{\prime}$. If $d_{h^{\prime}}^{\prime} > d_{h^{\prime}-1}^{\prime}$ and $d_{0}^{\prime}> d_{1}^{\prime}$, then there exist $1 \leq p_1 < p_2 \leq h-1$ with $d_h > d_{p_2}$ and $d_0 > d_{p_1}$
	which contradicts our assumption. It follows by construction of $\bar{\bZ}_i$, that either $d_{h^{\prime}}^{\prime} \leq  d_{h^{\prime}-1}^{\prime}$ 
	and $\bar{\bZ}_{h^{\prime}}^{\prime}$ is not full row rank or $d_{0}^{\prime} \leq d_{1}^{\prime}$ and $\bar{\bZ}_{1}^{\prime}$ is not full column rank; thus at least one of the null spaces ${\cal N}\big( \bZ_{h^{\prime}}^{\prime}\, \big)$, ${\cal N}\big((\bar{\bZ}_{1}^{\prime})^{\top} \big)$ is non empty. Moreover, $\bar{\bZ}_i$ has non-empty right and left null spaces for $2 \leq i \leq h-1$. The result follows using Lemma~\ref{lm:degenerate-local-global}.
 \end{proof}

\end{appendices}

\end{document}